\newcommand\cyr{%
 \renewcommand\rmdefault{wncyr}%
 \renewcommand\sfdefault{wncyss}%
 \renewcommand\encodingdefault{OT2}%
\normalfont\selectfont} \DeclareTextFontCommand{\textcyr}{\cyr}
\newtheorem{theorem}{Theorem}
\newtheorem{lemma}[theorem]{Lemma}
\newtheorem{corollary}[theorem]{Corollary}
\newtheorem{proposition}[theorem]{Proposition}
\newtheorem{remark}[theorem]{Remark}
\newtheorem{definition}[theorem]{Definition}
\newtheorem*{thm1}{Theorem 1}
\begin{document}

\title{The Square Variation of Rearranged Fourier Series}
\author{Allison Lewko \ \and Mark Lewko\thanks{M. Lewko is supported by a NSF Postdoctoral Fellowship, DMS-1204206.}}
\date{}
\maketitle
\begin{abstract}We prove that there exists a rearrangement of the first $N$ elements of the trigonometric system such that the $L^2$-norm of the square variation operator is at most $O_{\epsilon}(\log^{9/22+\epsilon}(N))$. This is an improvement over $O(\log^{1/2}(N))$ from the canonical ordering.
\end{abstract}
\section{Introduction}
Let $\Phi := \{\phi_n\}_{n=1}^{N}$ denote an orthonormal system (ONS) of functions from a probability space, $\mathbb{T}$, to $\mathbb{R}$. One is often interested, usually motivated by questions regarding almost everywhere convergence, in the behavior of the maximal partial sum operator
\[\mathcal{M}f(x) := \max_{\ell \leq N}\left| \sum_{n=1}^{\ell} a_n \phi_n(x)  \right|. \]
For an arbitrary ONS, the Rademacher-Menshov theorem states that $||\mathcal{M}f ||_{L^2} \ll \log(N) ||f||_{L^2}$, where the $\log(N)$ factor is known to be sharp. However, one can do much better for many classical systems, for instance the well-known Carleson-Hunt inequality allows one to replace $\log(N)$ with an absolute constant in the case of the trigonometric system.  More recently, there has been interest in variational refinements of these maximal results. We define the $r$-variation operator
\[  \mathcal{V}^{r}f(x)  := \left(\max_{\pi \in \mathcal{P}_{N}} \sum_{I \in \pi } \left| \sum_{n\in I} a_n\phi_n(x) \right|^r \right)^{1/r} \]
where $\mathcal{P}_N$ denotes the set of partitions of $[N]$ into subintervals.  Clearly, $\mathcal{V}^r f$ is pointwise non-decreasing as $r$ decreases, and $\mathcal{M}f  \leq \mathcal{V}^{r}f $ for all $r< \infty$. While estimates involving the maximal operator typically imply statements regarding almost everywhere convergence, estimates involving the larger variational operators typically imply quantitative statements about the rate of convergence. In \cite{LewkoO}, it was shown that the Rademacher-Menshov theorem can be strengthened to $|| \mathcal{V}^{2}f ||_{L^2} \ll \log(N)||f||_{L^2}$.

In the case of the trigonometric system, it was shown by Oberlin, Seeger, Tao, Thiele, and Wright \cite{varCarleson} that $||\mathcal{V}^{r}f||_{2} \ll_{r} ||f||_{2} $ for $r>2$ (strengthening the Carleson-Hunt inequality). In the case $r=2$, one can deduce the inequality $||\mathcal{V}^{2}f||_{2} \ll \sqrt{\log(N)}||f||_{2} $ from the Carleson-Hunt inequality (see \cite{LewkoO}, Theorem 3). Moreover, the $\sqrt{\log(N)}$ can be shown to be sharp (see \cite{varCarleson}, section 2).

When considering questions regarding partial sums of an ONS, the ordering of the system plays a crucial role. For instance, Olevskii \cite{Olevskii} has shown that any infinite complete ONS can be reordered in a manner such that almost everywhere convergence fails for some $L^2$ function. The related question of whether an ONS can be reordered in a manner such that almost everywhere convergence holds for every $L^2$ function is known as Kolmogorov's rearrangement problem and is one of the central open problems regarding orthonormal systems. Going further, Garsia has conjectured \cite{GarsiaBook} (see also \cite{GarsiaRe}) that given an ONS $\Phi := \{\phi_n\}_{n=1}^{N}$, one may find a permutation $\sigma(n):[N] \rightarrow [N]$ such that the reordered system $\{\phi_{\sigma(n)}\}_{n=1}^{N}$ satisfies $||\mathcal{M}f||_{2} \ll ||f||_{2}$ where the implied constant is absolute and independent of the system. This is known to imply an affirmative solution to Kolmogorov's problem.  As partial progress towards Garsia's conjecture, Bourgain \cite{Bour} has shown (for uniformly bounded systems) that one may always find a permutation of $[N]$ such that $||\mathcal{M}f||_{2} \ll \log \log (N) ||f||_{2}$. In \cite{LewkoO}, this was strengthened to $||\mathcal{V}^r f||_{2} \ll_{r} \log \log (N) ||f||_{2}$ for $r>2$. While these estimates have strong consequences for very general orthonormal systems, their conclusions are weaker than what is known to be true for most classical systems (such as the trigonometric system) in their canonical orderings.

With this in mind, it was asked in \cite{LewkoO} if one could improve the inequality $||\mathcal{V}^{2}f||_{2} \ll \sqrt{\log(N)}||f||_{2}$ by reordering the first $N$ elements of the trigonometric system. Here we provide an affirmative answer to this question by proving the following theorem:
\begin{thm1}\label{thm:MainI}
Let $\Phi := \{ \phi_n \}_{n=1}^{N}$ denote a ONS\footnote{We have defined an ONS to be real-valued, however the result follows for the complex-valued trigonometric system by applying the result to the real and imaginary parts separately.} uniformly bounded by some constant $A$. Let $\epsilon >0 , \gamma >1$. Then, with probability at least $1- c N^{-\gamma}$ (for some universal $c$), for a uniformly random permutation $\sigma: [N] \rightarrow [N]$, the system $\{ \phi_{\sigma(n)} \}_{n=1}^{N}$ will satisfy
$$|| \mathcal{V}^2 f ||_{2} \ll_{A,\epsilon, \gamma} \log^{\frac{9}{22} + \epsilon}(N) ||f||_{2}.$$
\end{thm1}

It turns out that treating the $\mathcal{V}^2$ operator requires a considerably more delicate analysis than the maximal and $r$-variation ($r>2$) cases previously studied. Indeed, the Dudley-type chaining/covering number methods used in \cite{Bour} and \cite{LewkoO} alone seem incapable of achieving anything better than $|| \mathcal{V}^2 f ||_{L^2} \ll \log^{1/2}(N)\log\log(N)  ||f||_{L^2}$ (see \cite{LewkoO}). The limitations of these methods have been previously overcome in the context of related problems, most notably Bourgain's solution to the $\Lambda(p)$-problem \cite{Bour}, as well as Talagrand's alternate approach and generalizations \cite{TalagrandSmooth}. The probabilistic component of our current work will use tools from both Bourgain and Talagrand's works, however additional complications enter as we will need to work with random subsets of a much greater density and derive stronger concentration bounds (see Section \ref{sec:remarks} for further discussion of these issues and an overview of this part of our argument). A careful combinatorial organization is also needed to reduce estimates for the $\mathcal{V}^2$ operator to questions amenable to these probabilistic methods. Here we rely, in part, on ideas from Taylor's work \cite{Taylor} on the path variation of Brownian motion.

We do not expect that the exponent $\frac{9}{22}$ is sharp. In the maximal case, it is known that Bourgain's estimate $||\mathcal{M}f||_{2} \ll \log \log (N) ||f||_{2}$ is the best one can achieve with a purely probabilistic argument (see remark 2 in \cite{Bour}). It is consistent with our knowledge that probabilistic arguments might be able to achieve $||\mathcal{V}^2 f||_{2} \ll \log \log (N) ||f||_{2}$, although this would surely require a much deeper analysis. In \cite{LewkoO} (see Theorem 6) it was shown that for any bounded ONS one may find a function $f$ such that $||\mathcal{V}^2 f||_{2} \gg \sqrt{\log \log (N)} ||f||_{2}$, which gives a lower bound for any ordering. This fact is closely related to the law of the iterated logarithm (see \cite{LewkoProb}).

\section{Preliminaries}
We use the notation $ x\ll y$, for instance, to mean that there exists an absolute constant $C$ such that $x \leq C y$. We similarly employ notation like $x \ll_p y$ to mean that there exists a constant $C_p$ depending only on $p$ such that $x \leq C_p y$. We will use $[N]$ to denote the set of the first $N$ natural numbers $\{1,2,\ldots,N\}$.

\begin{remark}\label{rem:bounded}Throughout this paper, we will consider orthonormal systems $\Phi:=\{\phi_n\}_{n=1}^{N}$ uniformly bounded by a fixed constant $A$, meaning that $|\phi_n(x)| \leq A$ for all $n \in [N]$ and all $x \in \mathbb{T}$. We will refer to these simply as \textbf{bounded orthonormal systems}. Since $A$ is fixed, we allow dependence on $A$ in all implicit constants (such as the asymptotic notations $\ll$ and $O(\cdot)$) which we will not always explicitly state.
\end{remark}

We let $\Gamma$ denote a convex, symmetric function $\Gamma: \mathbb{R} \rightarrow \mathbb{R}^+$ such that $\Gamma(0)=0$, $\Gamma$ is increasing on $\mathbb{R}^+$, and $\Gamma(t)$ tends to infinity as $t$ tends to infinity. The (Luxemburg) Orlicz space norm associated to $\Gamma$ is then defined by:

\begin{definition}\label{def:GammaNorm}For $f: \mathbb{T} \rightarrow \mathbb{R}$,
\[ ||f||_{\Gamma_K} := \inf \left\{ \gamma>0 | \int_{\mathbb{T}} \Gamma_K \left(\frac{f}{\lambda}\right) \leq 1\right\}.\]
\end{definition}

We define the following convex function from $\mathbb{R}$ to $\mathbb{R}$, parameterized by a value $1 < K < \infty$ and a value $ 2 < p <3$:
\[\Gamma_K(t) := \left\{
                   \begin{array}{ll}
                     |t|^p, & \hbox{if $|t| \leq K$;} \\
                     \left(1 + \frac{p-2}{2}\right)K^{p-2}t^2 - \left(\frac{p-2}{2}\right) K^p, & \hbox{if $|t|> K$.}
                   \end{array}
                 \right.\]
We also define
\[ \gamma_K(t) := \left\{
                    \begin{array}{ll}
                      |t|^{p-2}, & \hbox{if $|t| \leq K$;} \\
                      K^{p-2}, & \hbox{if $|t| > K$.}
                    \end{array}
                  \right.\]
We observe that $t^2 \gamma_K(t) \leq \Gamma_K(t) \leq \left(1 + \frac{p-2}{2}\right)t^2 \gamma_K(t)$.

\begin{lemma}\label{lem:gammabasic} For any fixed $1 < K < \infty$ and $2 < p <3$, it holds for all $s, t \in \mathbb{R}$ that
\[ |s \gamma_K(s) - t \gamma_K(t) | \leq 3(\gamma_K(s) + \gamma_K(t)) |s-t|.\]
\end{lemma}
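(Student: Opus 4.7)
The plan is to exploit the symmetry of the functions involved and reduce to a short case analysis. First I would define $\phi(u) := u\gamma_K(u)$ and note two elementary features: (i) $\gamma_K$ is even, so $\phi$ is odd; (ii) by inspecting the two pieces of the definition, $\phi(u) = \operatorname{sgn}(u)|u|^{p-1}$ for $|u|\le K$ and $\phi(u) = K^{p-2}u$ for $|u|>K$, so $\phi$ is continuous on $\mathbb{R}$ and piecewise $C^1$, with
\[
\phi'(u) \;=\; (p-1)|u|^{p-2}\ \text{on}\ (-K,K),\qquad \phi'(u) = K^{p-2}\ \text{on}\ \{|u|>K\}.
\]
In either piece $\phi'(u) \le (p-1)\gamma_K(u) \le 2\gamma_K(u)$, since $p<3$.

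Next I would split into two cases according to whether $s$ and $t$ have the same sign. In the \emph{same-sign} case, using oddness of $\phi$ I may assume $0\le t\le s$. On the interval $[0,\infty)$ the function $\gamma_K$ is non-decreasing on $[0,K]$ and constant on $[K,\infty)$, hence $\gamma_K(u)\le \gamma_K(s)$ for every $u\in[t,s]$. Integrating the derivative bound across the (at most one) breakpoint at $u=K$ gives
\[
|\phi(s)-\phi(t)| \;\le\; \int_t^s \phi'(u)\,du \;\le\; 2\gamma_K(s)(s-t) \;\le\; 3\bigl(\gamma_K(s)+\gamma_K(t)\bigr)|s-t|,
\]
which is the desired bound.

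In the \emph{opposite-sign} case I may assume $s\ge 0\ge t$ by symmetry, so that $|s-t|=s+(-t)$. By oddness of $\phi$ and evenness of $\gamma_K$,
\[
|\phi(s)-\phi(t)| \;=\; \phi(s)+\phi(-t) \;=\; s\gamma_K(s) + (-t)\gamma_K(t)
\;\le\; \bigl(\gamma_K(s)+\gamma_K(t)\bigr)\bigl(s+(-t)\bigr),
\]
which again gives the claimed inequality (in fact with constant $1$ in this case).

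There is no real obstacle here: the content is the oddness of $u\mapsto u\gamma_K(u)$ combined with the crude derivative bound $\phi'\le (p-1)\gamma_K$, which absorbs the mild kink at $|u|=K$ and comfortably fits under the slack constant $3$ even in the worst (same-sign) configuration. The only mild subtlety is remembering that $\gamma_K$ is not monotone in $|u|$ only in trivial ways, so the substitution $\gamma_K(u)\le\gamma_K(s)$ for $u\in[t,s]\subset[0,\infty)$ is legitimate.
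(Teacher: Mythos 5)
Your proof is correct, and it takes a genuinely different (and cleaner) route than the paper's. The paper proceeds by a direct algebraic case analysis on whether $|s|$ and $|t|$ fall below or above the threshold $K$: it parametrizes $t=\beta s$ (resp.\ $t=cK$, $s=(1+\sigma)K$) and verifies the inequality in each regime, including separate sub-cases for the signs, finishing with the trivial case $|s|,|t|\ge K$. You instead package the problem around the single auxiliary function $\phi(u)=u\gamma_K(u)$, observe it is odd, continuous, and piecewise $C^1$ with the uniform bound $\phi'(u)\le (p-1)\gamma_K(u)$ on both pieces, and then split only according to the sign pattern of $(s,t)$. In the same-sign case the fundamental theorem of calculus together with the monotonicity of $\gamma_K$ on $[0,\infty)$ gives $|\phi(s)-\phi(t)|\le (p-1)\gamma_K(\max(|s|,|t|))\,|s-t|$, and in the opposite-sign case oddness makes the bound immediate with constant $1$. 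This is shorter, absorbs the paper's several algebraic cases into a single integral estimate, and actually produces the sharper constant $p-1<2$ in the harder case. The only point worth stating a bit more explicitly is that the derivative bound $\phi'\le(p-1)\gamma_K$ is being used together with the fact that $\gamma_K$ is non-decreasing on $[0,\infty)$, which is what legitimizes replacing $\gamma_K(u)$ by $\gamma_K(s)$ inside the integral; you do note this at the end, so the argument is complete.
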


\begin{proof}
Without loss of generality, we may assume that $|s| \geq |t|$. We define $\beta$ by $t = \beta s$, where $|\beta| \leq 1$. For notational concision, we also define $\alpha = p-2$. We first consider the case where $|s|, |t| \leq K$. In this case, we consider the quantity
\[ |s \gamma_K (s) - t \gamma_K(t)| = |s|^{1+\alpha} \left(1 - \beta |\beta|^\alpha\right).\]
We must establish that this is upper bounded by $3|s|^{1+\alpha}(1 - \beta)(1+|\beta|^\alpha)$. Since $|\beta|\leq 1$ and $\alpha \leq 1$, we have $|\beta|^\alpha \geq |\beta|$, and hence $(1-\beta)(1 + |\beta|^\alpha) = 1 - \beta + |\beta|^\alpha - \beta |\beta|^\alpha \geq 1 - \beta |\beta|^\alpha$. The desired inequality follows (note that the factor of 3 is not needed for this case).

We next consider the case where $|s| > K$ and $|t| \leq K$. Here, we wish to bound the quantity $|s K^\alpha- t|t|^{\alpha}|$ by the quantity $3|s-t|(K^\alpha + |t|^\alpha)$. We suppose that $s \geq K \geq t \geq 0$. Then, it suffices to show that
\[ \frac{sK^\alpha - t^{1+ \alpha}}{s-t} \leq 3 K^\alpha,\]
which holds if and only if
\[ \frac{s - t^{1+\alpha}K^{-\alpha}}{s-t} \leq 3.\]

We define $0 \leq c \leq 1$ by $t = cK$ and $\sigma>0$ by $s = (1+\sigma)K$. Then we have:
\[ \frac{s - t^{1+\alpha}K^{-\alpha}}{s-t} = \frac{ 1+\sigma - c^{\alpha+1}}{1+ \sigma - c}.\]
We observe:
\[\frac{ 1+\sigma - c^{\alpha+1}}{1+ \sigma - c} = \frac{1 - c^{1+\alpha}}{1 + \sigma - c} + \frac{\sigma}{1+\sigma-c} \leq \frac{1-c^{1+\alpha}}{1 - c} + \frac{\sigma}{\sigma}.\]
Since $\alpha \leq 1$ and $c \leq 1$, we note that $c^{1+\alpha} \geq c^2$, so this is $\leq 1 + c+ 1 \leq 3$, as required.

We now suppose instead that $s \geq K$, and $-K \leq t <0$. In this case, we need to show that
\[ \frac{sK^\alpha+|t|^{1+\alpha}}{s + |t|}\leq 3 K^\alpha.\]
Dividing out $K^\alpha$, we obtain the equivalent requirement
\[ \frac{s + |t|^{1+ \alpha}K^{-\alpha}}{s+|t|} \leq 3.\]
Using that $|t|\leq K\leq s$, we observe that $\frac{s + |t|^{1+ \alpha}K^{-\alpha}}{s+|t|}\leq \frac{2s}{s} = 2$, and so the desired inequality holds. The cases where $s < -K$ and $|t| \leq K$ can be handled symmetrically.

Finally, we must consider the case where $|s|, |t| \geq K$. In this case, we wish to bound the quantity
\[|s K^\alpha - t K^\alpha| = K^\alpha |s-t| = \frac{1}{2} (\gamma_K(s) + \gamma_K(t))|s-t|,\]
so this is clearly $\leq 3 |\gamma_K(s) + \gamma_K(t)|\cdot|s-t|$ as required.
\end{proof}

\begin{lemma}\label{lem:gammain} For $K\geq 1$, for all $t$ we have that
\[\Gamma_{K}(t) \leq t^{p}\]
\[\Gamma_{K}(t) \leq \left(1 + \frac{p-2}{2}\right)K^{p-2}t^2.\]
It follows that for $f: \mathbb{T} \rightarrow \mathbb{R}$ we have $||f||_{\Gamma_{K}} \leq ||f||_{p}$ and $||f||_{\Gamma_K} \leq \left(1 + \frac{p-2}{2}\right)^{1/2}K^{(p-2)/2}||f||_{2}$.
\end{lemma}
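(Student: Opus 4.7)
The lemma splits cleanly into two pointwise inequalities on $\Gamma_K$ followed by a routine passage to the Orlicz norm. My plan is to handle the two pointwise bounds by case analysis depending on whether $|t| \leq K$ or $|t| > K$, and then to feed each bound into Definition~\ref{def:GammaNorm}.

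For the bound $\Gamma_K(t) \leq |t|^p$, the case $|t| \leq K$ is an equality by definition. For $|t| > K$, setting $\alpha := p-2 \in (0,1)$ and $u := |t|/K \geq 1$, the inequality is equivalent to
\[ f(u) := u^{2+\alpha} - \left(1 + \tfrac{\alpha}{2}\right) u^2 + \tfrac{\alpha}{2} \geq 0, \qquad u \geq 1. \]
One checks $f(1) = 0$ and $f'(u) = (2+\alpha) u (u^\alpha - 1) \geq 0$ for $u \geq 1$, so $f \geq 0$ on $[1,\infty)$. For the bound $\Gamma_K(t) \leq (1 + \frac{p-2}{2}) K^{p-2} t^2$: in the range $|t| \leq K$ we use $|t|^p = |t|^{p-2} t^2 \leq K^{p-2} t^2$, and in the range $|t| > K$ the definition of $\Gamma_K$ is already $(1 + \frac{p-2}{2}) K^{p-2} t^2$ minus a nonnegative quantity, so the bound is immediate.

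For the norm statements, Definition~\ref{def:GammaNorm} says $\|f\|_{\Gamma_K} \leq \gamma$ whenever $\int_{\mathbb{T}} \Gamma_K(f/\gamma)\, \leq 1$. Using the first pointwise bound,
\[ \int_{\mathbb{T}} \Gamma_K(f/\gamma) \leq \gamma^{-p} \|f\|_p^p, \]
so the choice $\gamma = \|f\|_p$ gives $\|f\|_{\Gamma_K} \leq \|f\|_p$. Using the second pointwise bound,
\[ \int_{\mathbb{T}} \Gamma_K(f/\gamma) \leq \left(1 + \tfrac{p-2}{2}\right) K^{p-2}\, \gamma^{-2} \|f\|_2^2, \]
so $\gamma = (1 + \frac{p-2}{2})^{1/2} K^{(p-2)/2} \|f\|_2$ makes the integral $\leq 1$, yielding the second norm bound.

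There is no real obstacle here; the only mildly nontrivial step is verifying the inequality $(1 + \alpha/2)u^2 - \alpha/2 \leq u^{2+\alpha}$ for $u \geq 1$, which the derivative computation above resolves cleanly (and which geometrically just reflects that $\Gamma_K$ is designed so that on $|t| > K$ it is the quadratic tangent-from-below to $|t|^p$ at $|t| = K$, consistent with the construction being a lower envelope of $|t|^p$).
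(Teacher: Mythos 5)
Your proof is correct and takes essentially the same route as the paper's: both reduce the nontrivial bound $\Gamma_K(t)\leq |t|^p$ on $|t|>K$ to a single-variable calculus inequality after normalizing by $K$ (you substitute $u=|t|/K$ and argue by monotonicity; the paper substitutes $c=K/|t|$ and locates the critical point), and the norm statements then follow from Definition~\ref{def:GammaNorm} exactly as you write. The tangency observation at the end is a nice sanity check but plays no logical role.
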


\begin{proof}The inequality $\Gamma_{K}(t) \leq \left(1 + \frac{p-2}{2}\right)K^{p-2}t^2$ is clear from the definition of $\Gamma_K(t)$. We prove $\Gamma_{K}(t) \leq t^{p}$. This is clear for $|t| \leq K$, so we assume $|t| > K$. We let $0\leq c \leq 1$ be defined by $c:= \frac{K}{t} $. It then suffices to show
\[ \left(1 + \frac{p-2}{2}\right) c^{p-2} - \left(\frac{p-2}{2}\right)c^p  \leq 1. \]

Setting the derivative (with respect to $c$) equal to $0$, we see that
\[0 = \frac{d}{dc} \left(1 + \frac{p-2}{2}\right) c^{p-2} - \left(\frac{p-2}{2}\right)c^p
= (p-2) \left(1+ \frac{p-2}{2}\right) c^{p-3} - \left(\frac{p-2}{2}\right)p c^{p-1}. \]
This implies that $(1+\frac{p-2}{2}) c^{p-3}= \frac{p}{2} c^{p-1}$ and it follows that $c=1$. Lastly the inequality can be easily verified at $c=0,1$.

Let $f$ be a function from $\mathbb{T}$ to $\mathbb{R}$, and let $\lambda = ||f||_{p}$. Now, using $\Gamma_{K}(t) \leq t^{p}$,
\[ \int \Gamma_{K}(f/\lambda) \leq ||f||_{p}^{-p} \int |f|^{p} \leq 1.\]
Similarly, setting $\lambda = \left(1 + \frac{p-2}{2}\right)^{1/2}K^{(p-2)/2}||f||_2$ and using $\Gamma_{K}(t) \leq \left(1 + \frac{p-2}{2}\right)K^{p-2}t^2$, we have
\[\int \Gamma_{K}(f/\lambda) \leq \left(\left(1 + \frac{p-2}{2}\right)^{1/2}K^{(p-2)/2}||f||_{2}\right)^{-2} \int  \left(1 + \frac{p-2}{2}\right)K^{p-2}|f|^2 \leq 1. \]
\end{proof}

Given $\Gamma_K$, we also define its \emph{dual}, $\Gamma^*_K: \mathbb{R} \rightarrow \mathbb{R}$, as
\[\Gamma^*_K(x) = \int_0^{|x|} \left(\Gamma'_K\right)^{-1}(t) dt.\]
By a straightforward computation, we have for $t \geq 0$:
\[ \left(\Gamma'_K\right)^{-1} (t) = \left\{
                                       \begin{array}{ll}
                                         \left( t/p\right)^{\frac{1}{p-1}}, & \hbox{if $t \leq p K^{p-1}$;} \\
                                         t/\left(p K^{p-2}\right), & \hbox{if $t \geq p K^{p-1}$.}
                                       \end{array}
                                     \right.\]
We then compute that for $0 \leq x \leq p K^{p-1}$
\begin{equation}\label{eq:dualLow}
\Gamma^*_K (x) = \int_0^x \left(\frac{t}{p}\right)^{\frac{1}{p-1}} dt = p^{-\frac{p}{p-1}} (p-1) x^{\frac{p}{p-1}}.
\end{equation}
For $x > p K^{p-1}$, we compute
\begin{equation}\label{eq:dualHigh}
\Gamma^*_K(x) = \int_0^{pK^{p-1}} \left(\frac{t}{p}\right)^{\frac{1}{p-1}} dt + \int_{pK^{p-1}}^x \frac{t}{pK^{p-2}} dt = \frac{x^2}{2pK^{p-2}} + \left( \frac{p-2}{2}\right) K^p.
\end{equation}

We call $\Gamma^*_K$ the dual of $\Gamma_K$ because $||\cdot ||_{\Gamma^*_K}$ is the equivalent to the dual norm of $||\cdot ||_{\Gamma_K}$. More precisely:

\begin{lemma}\label{lem:dual}
There exist (universal) positive constants $C_1, C_2$ such that, for all $f$,
\[ C_1 \sup_{||g||_{\Gamma^*_K} \leq 1} \int fg \leq ||f||_{\Gamma_K} \leq C_2 \sup_{||g||_{\Gamma^*_K}\leq 1} \int fg.\]
\end{lemma}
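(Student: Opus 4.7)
The plan is to establish the standard equivalence between the Luxemburg norm $||\cdot||_{\Gamma_K}$ and the Orlicz dual norm $\sup_{||g||_{\Gamma_K^*}\le 1}\int fg$, exploiting the fact that $\Gamma_K^*$ is by construction the Legendre--Fenchel conjugate of $\Gamma_K$ (integration by parts on the defining formula $\Gamma_K^*(x) = \int_0^{|x|}(\Gamma_K')^{-1}(t)\,dt$ recovers the variational characterization $\Gamma_K^*(x) = \sup_a(a|x|-\Gamma_K(a))$). This yields Young's inequality $|ab|\le\Gamma_K(a)+\Gamma_K^*(b)$ for all $a,b\in\mathbb{R}$, which is the central tool for both directions.

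For the direction $\sup_{||g||_{\Gamma_K^*}\le 1}\int fg\le 2||f||_{\Gamma_K}$, I would set $\lambda=||f||_{\Gamma_K}$ (the case $\lambda = 0$ being trivial), note that continuity gives $\int\Gamma_K(f/\lambda)\le 1$ and $\int\Gamma_K^*(g)\le 1$ whenever $||g||_{\Gamma_K^*}\le 1$, and apply Young's inequality pointwise to $f/\lambda$ and $g$. Integrating yields $\int (f/\lambda)g\le 2$, hence $\int fg\le 2\lambda$, giving $C_1=1/2$.

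For the matching lower bound, I would construct the explicit near-extremizer $g:=(p-1)^{-1}\,\mathrm{sgn}(f)\,\Gamma_K'(|f|/\lambda)$ with $\lambda=||f||_{\Gamma_K}$. The crux is to verify the pointwise inequality $\Gamma_K^*(\Gamma_K'(t))\le (p-1)\Gamma_K(t)$ uniformly in $K\ge 1$ and $p\in(2,3)$, which I would check piecewise. For $|t|\le K$, substituting $\Gamma_K'(t)=p|t|^{p-1}\mathrm{sgn}(t)$ into \eqref{eq:dualLow} gives the equality $\Gamma_K^*(\Gamma_K'(t))=(p-1)|t|^p=(p-1)\Gamma_K(t)$. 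For $|t|>K$, where $\Gamma_K'(t)=pK^{p-2}t$, a short computation with \eqref{eq:dualHigh} yields $\Gamma_K^*(\Gamma_K'(t))=\Gamma_K(t)+(p-2)K^p$, bounded by $(p-1)\Gamma_K(t)$ since $\Gamma_K(t)\ge\Gamma_K(K)=K^p$ in that regime. Combined with the convexity-based scaling $\Gamma_K^*(x/c)\le c^{-1}\Gamma_K^*(x)$ for $c=p-1\ge 1$ (valid since $\Gamma_K^*$ is convex with $\Gamma_K^*(0)=0$), this gives $\int\Gamma_K^*(g)\le\int\Gamma_K(f/\lambda)\le 1$, so $||g||_{\Gamma_K^*}\le 1$. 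The lower bound on $\int fg$ then follows from the convexity inequality $\Gamma_K(t)\le t\Gamma_K'(t)$:
\[\int fg \;=\; \frac{\lambda}{p-1}\int (|f|/\lambda)\,\Gamma_K'(|f|/\lambda) \;\ge\; \frac{\lambda}{p-1}\int\Gamma_K(f/\lambda)\;=\;\frac{\lambda}{p-1}\;\ge\;\frac{\lambda}{2},\]
giving $C_2=2$.

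The main obstacle is the piecewise verification of $\Gamma_K^*(\Gamma_K'(t))\le (p-1)\Gamma_K(t)$ in the high regime $|t|>K$, where the additive term $(p-2)K^p$ arising from the quadratic piece of \eqref{eq:dualHigh} must be absorbed into $\Gamma_K(t)$; the monotonicity bound $\Gamma_K(t)\ge K^p$ for $|t|\ge K$ handles this cleanly, and the constant $p-1\in(1,2)$ remains universal as $p$ ranges over $(2,3)$.
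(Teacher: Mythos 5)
Your proof is correct and spells out the standard Orlicz-duality argument that the paper simply delegates to \cite{KR}: Young's inequality $|ab| \leq \Gamma_K(a) + \Gamma_K^*(b)$ gives the upper bound with $C_1 = 1/2$, and your near-extremizer $g = (p-1)^{-1}\mathrm{sgn}(f)\,\Gamma_K'(|f|/\lambda)$, together with the pointwise estimate $\Gamma_K^*(\Gamma_K'(t)) \leq (p-1)\Gamma_K(t)$ (verified correctly in both regimes, the key point being $\Gamma_K(t) \geq K^p$ for $|t|\geq K$ to absorb the additive $(p-2)K^p$ term), gives $C_2 = 2$, both uniform in $p\in(2,3)$ and $K>1$. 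The one step left implicit is the equality $\int\Gamma_K(f/\lambda) = 1$ at $\lambda = \|f\|_{\Gamma_K}$ used in the final chain of inequalities; this holds because $\Gamma_K$ satisfies the $\Delta_2$ condition (quadratic growth at infinity), so the Luxemburg infimum is attained with equality for $f\neq 0$, and it would be worth stating.
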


This follows from the standard theory of Orlicz spaces (see Chapter 2 of \cite{KR}, for instance).

\begin{lemma}\label{lem:pieces} For any measurable $f: \mathbb{T} \rightarrow \mathbb{R}$, we can decompose $f=f_1 + f_2$ such that
\[||f_1 ||_{L^p} \ll ||f||_{\Gamma_{K}} \text{ and } ||f_2||_{L^2} \ll K^{(2-p)/2} ||f||_{\Gamma_K} .  \]
\end{lemma}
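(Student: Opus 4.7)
The plan is a direct truncation argument at the threshold where $\Gamma_K$ transitions from $|t|^p$ to a quadratic expression. I will set $\lambda := ||f||_{\Gamma_K}$ (or rather pick any $\lambda > ||f||_{\Gamma_K}$ and let $\lambda$ decrease at the end, to handle the case where the Luxemburg infimum is not attained), and split $f = f_1 + f_2$ pointwise by
\[ f_1 := f \cdot \mathbf{1}_{|f| \leq \lambda K}, \qquad f_2 := f \cdot \mathbf{1}_{|f| > \lambda K}. \]

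For $f_1$, I will use that on its support $|f/\lambda| \leq K$, so from the definition $\Gamma_K(f/\lambda) = |f/\lambda|^p$ exactly. The defining inequality $\int \Gamma_K(f/\lambda) \leq 1$ from Definition \ref{def:GammaNorm} therefore yields $\int |f_1|^p \leq \lambda^p \int_{|f| \leq \lambda K} \Gamma_K(f/\lambda) \leq \lambda^p$, which gives $||f_1||_p \leq \lambda \ll ||f||_{\Gamma_K}$.

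For $f_2$, the key input is the elementary inequality $t^2 \gamma_K(t) \leq \Gamma_K(t)$ recorded right after the definition of $\gamma_K$. On the support of $f_2$ we have $|f/\lambda| > K$, and on this range $\gamma_K(f/\lambda) = K^{p-2}$ by definition; the preceding inequality then reads $K^{p-2}(f/\lambda)^2 \leq \Gamma_K(f/\lambda)$ pointwise, and integrating gives $K^{p-2} \lambda^{-2} ||f_2||_2^2 \leq 1$, i.e.\ $||f_2||_2 \leq K^{(2-p)/2} ||f||_{\Gamma_K}$.

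I do not anticipate a real obstacle: the content of the lemma is simply that the piecewise structure of $\Gamma_K$ (behaving like $|t|^p$ below $K$ and like a multiple of $K^{p-2} t^2$ above $K$) is mirrored by the decomposition of a $\Gamma_K$-bounded function into an $L^p$ piece of size $O(||f||_{\Gamma_K})$ and an $L^2$ piece of size $O(K^{(2-p)/2} ||f||_{\Gamma_K})$. The only subtle point is choosing the truncation height to be the \emph{rescaled} value $\lambda K$ rather than $K$, so that the cutoff matches the location of the breakpoint in the integrand $\Gamma_K(f/\lambda)$; once this is recognized the calculation above is essentially forced.
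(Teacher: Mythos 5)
Your proof is correct and follows essentially the same route as the paper: truncation at the point where $\Gamma_K(\cdot/\lambda)$ switches from its $p$-th power branch to its quadratic branch, with the $L^p$ piece controlled by the $|t|^p$ branch and the $L^2$ piece by the quadratic branch. The paper normalizes $\|f\|_{\Gamma_K}=1$ and truncates at $|f/2|<K$ (then bounds the measure of the large set by $K^{-p}$ to absorb the subtracted constant term of $\Gamma_K$), whereas you invoke the pre-recorded pointwise bound $t^2\gamma_K(t)\leq\Gamma_K(t)$ to handle the quadratic branch in one line — a marginally cleaner execution of the same idea.
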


\begin{proof}By homogeneity we may assume that $||f||_{\Gamma_{K}}=1$. We let $\mathbb{I}_S$ denote the indicator function of a set $S \subset \mathbb{T}$. We now define
\[f_1 := f \cdot \mathbb{I}_{\{ x : |\frac{f(x)}{2}| <  K\}} \text{ and } f_2 := f \cdot \mathbb{I}_{\{x: |\frac{f(x)}{2}| \geq  K\}}.  \]

Using the hypothesis that $||f||_{\Gamma_{K}}=1$, we have
\[\int \Gamma_{K}(f/2) =  \int (f/2)^{p}  \mathbb{I}_{\{x:|\frac{f(x)}{2}| <  K\}}  + \int \left( \left(1 + \frac{p-2}{2}\right)K^{p-2}(f/2)^2 - \left(\frac{p-2}{2}\right) K^p \right) \mathbb{I}_{\{x:|\frac{f(x)}{2}| \geq  K\}}     \leq 1. \]
It follows that
\[ \int (f/2)^{p}  \mathbb{I}_{\{x:|\frac{f(x)}{2}| <  K\}}  \leq 1, \]
which is equivalent to $||f_1||_{p} \leq 2$ (or $||f_1||_{p} \leq 2||f||_{\Gamma_K}$). Next we have that
\[\int \left( \left(1 + \frac{p-2}{2}\right)K^{p-2}(f/2)^2 - \left(\frac{p-2}{2}\right) K^p \right) \mathbb{I}_{\{x:|\frac{f(x)}{2}| \geq  K\}}\]
\[=\int \left( 1 + \frac{p-2}{2}\right)K^{p-2}(f/2)^2  \mathbb{I}_{\{x:|\frac{f(x)}{2}| \geq  K\}} - \mu\left(\left\{x:\left|\frac{f(x)}{2}\right| \geq  K \right\}\right) \frac{p-2}{2} K^p \leq 1, \]
where $\mu$ denotes the Lebesgue measure.

Since we are assuming $||f||_{\Gamma_K} = 1$ and $\Gamma_K(f(x)/2) \geq K^p$ whenever $|f(x)/2| \geq K$, we must have
$\mu\left( \left\{ x: \left|\frac{f(x)}{2}\right|\geq K\right\}\right)\leq K^{-p}$.
Combining this with the above inequality, we see that
\[ \int \left( 1 + \frac{p-2}{2}\right)K^{p-2}(f/2)^2  \mathbb{I}_{\{x:|\frac{f(x)}{2}| \geq  K\}}  \leq 1 + \frac{p-2}{2}.\]
This implies
\[ \int (f_2)^2   \leq 4 K^{2-p}.\]

\end{proof}

\begin{lemma}\label{lem:supportBound}We have that $||f||_{2} \ll   p K^{p-1}||f||_{\Gamma_{K}^{*}}$.
\end{lemma}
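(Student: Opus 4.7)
The plan is to exploit the piecewise formula for $\Gamma_K^*$ given in equations (\ref{eq:dualLow}) and (\ref{eq:dualHigh}), combined with the fact that $\mathbb{T}$ is a probability space. By homogeneity, I may assume $||f||_{\Gamma_K^*} = 1$, so that $\int_{\mathbb{T}} \Gamma_K^*(f) \leq 1$. I then split $\int f^2$ according to whether $|f(x)|$ lies above or below the crossover threshold $pK^{p-1}$ at which the definition of $\Gamma_K^*$ switches from the power $|x|^{p/(p-1)}$ branch to the quadratic branch.

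On the set $\{x : |f(x)| > pK^{p-1}\}$, equation (\ref{eq:dualHigh}) yields the pointwise lower bound $\Gamma_K^*(f(x)) \geq f(x)^2/(2pK^{p-2})$, so that
\[ \int_{\{|f| > pK^{p-1}\}} f^2 \leq 2pK^{p-2} \int_{\mathbb{T}} \Gamma_K^*(f) \leq 2pK^{p-2}. \]
On the complementary set I use only the crude pointwise estimate $f(x)^2 \leq p^2 K^{2p-2}$ together with the fact that $\mathbb{T}$ has total mass $1$, obtaining $\int_{\{|f| \leq pK^{p-1}\}} f^2 \leq p^2 K^{2p-2}$.

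Adding these two estimates and noting that $2pK^{p-2} \leq p^2K^{2p-2}$ for $K \geq 1$ and $p \in (2,3)$, we conclude $||f||_2^2 \ll p^2 K^{2p-2}$, which upon taking square roots is exactly the stated inequality. There is no real obstacle here: the calculation is essentially trivial once one observes that the threshold $pK^{p-1}$ appearing in the definition of $\Gamma_K^*$ is precisely calibrated so that the trivial $L^\infty$ control on the low-value set and the Orlicz-derived $L^2$ control on the high-value set both contribute at the same order $pK^{p-1}$; the only thing to verify is the quadratic lower bound for $\Gamma_K^*$ in the high regime, which is immediate by discarding the nonnegative constant $\tfrac{p-2}{2}K^p$ in (\ref{eq:dualHigh}).
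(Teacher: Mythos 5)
Your proof is correct and follows essentially the same route as the paper: both normalize $||f||_{\Gamma_K^*}=1$, split $\int f^2$ at the threshold $|f|=pK^{p-1}$, apply the quadratic lower bound on $\Gamma_K^*$ from (\ref{eq:dualHigh}) on the high set, and use the trivial pointwise bound together with $\mu(\mathbb{T})=1$ on the low set.
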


\begin{proof}
Referring to the computation of $\Gamma_{K}^*$ in (\ref{eq:dualHigh}), we see that \[\Gamma_{K}^*(x) \geq \mathbb{I}_{\geq p K^{p-1}}(x) \cdot \frac{x^2}{2pK^{p-2}}.\]
Let $||f||_{\Gamma^*}= 1$, then
\begin{equation}\label{eq:Mbound1}
\frac{1}{2pK^{p-2}}\int_{\mathbb{T}} \mathbb{I}_{\geq p K^{p-1}} (f) f^2 \leq \int_{\mathbb{T}} \Gamma_{K}^*(f) \leq 1.
\end{equation}
We also note that
\begin{equation}\label{eq:Mbound2}
||f||_{L^2}^2  = \int_{\mathbb{T}} \mathbb{I}_{ \leq p K^{p-1}} (f) f^2 + \int_{\mathbb{T}} \mathbb{I}_{\geq p K^{p-1}}(f) f^2 \leq p^2 K^{2p-2} + \int_{\mathbb{T}} \mathbb{I}_{\geq p K^{p-1}}(f) f^2.
\end{equation}
Combining (\ref{eq:Mbound1}) and (\ref{eq:Mbound2}), we have $||f||_{L^2}^2 \ll p^2 K^{2p-2}$.

\end{proof}

We will now recall several probabilistic results that we will need later.  The following lemma (see p. 229 of \cite{BourLp}) is the chaining argument from Bourgain's work on the $\Lambda(p)$-problem:
\begin{lemma}\label{lem:chaining} Let $\mathcal{E} \subset \mathbb{R}^{N}$ and $B := \sup_{x \in \mathcal{E}}||x||_{\ell^2}$ (the diameter of $\mathcal{E}$).  Let $0 <\delta <1$ and $\{\xi_i\}_{i=1}^{N}$ independent $0,1$-valued random variables (selectors) with mean $\delta = \int \xi_i(\omega) d \omega$ and $1 \leq m \leq N$. Then for any $q \geq 1$,
\begin{equation}
\left| \left| \sup_{x \in \mathcal{E}, |S| \leq m} \sum_{i \in S} \xi_i(\omega) x_i  \right| \right|_{L^{q}(d\omega)}
\ll \left[\delta m + \frac{q}{\log(1/\delta)} \right]^{1/2} + \log^{-1/2}(1/\delta) \int_{0}^{B} \log^{1/2} (N_{2}(\mathcal{E},t)) dt.
\end{equation}
\end{lemma}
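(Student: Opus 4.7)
My plan is a Dudley-type chaining argument adapted to sparse Bernoulli selectors: the sparsity $\delta$ enters as a $\log(1/\delta)$ factor in the exponent of the single-scale tail bound, which after integration over scales yields the $\log^{-1/2}(1/\delta)$ prefactor on the chaining integral. Fix dyadic scales $B_k := B\cdot 2^{-k}$ and let $\mathcal{E}_k$ be a minimal $B_k$-net of $\mathcal{E}$, so $|\mathcal{E}_k|\leq N_2(\mathcal{E},B_k)$; for $x\in\mathcal{E}$ let $\pi_k(x)\in\mathcal{E}_k$ be a nearest point and write $x = \pi_0(x)+\sum_{k\geq 0}(\pi_{k+1}(x)-\pi_k(x))$. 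By the triangle inequality inside the $L^q$ norm, it suffices to control the base contribution $\|\sup_{x,|S|\leq m}|\sum_{i\in S}\xi_i\pi_0(x)_i|\|_{L^q}$ and, for each $k$, the single-scale quantity
$$Y_k := \bigl\|\sup_{y\in\mathcal{D}_k,\,|S|\leq m}\bigl|\textstyle\sum_{i\in S}\xi_i y_i\bigr|\bigr\|_{L^q},\qquad \mathcal{D}_k := \{\pi_{k+1}(x)-\pi_k(x):x\in\mathcal{E}\},$$
where $|\mathcal{D}_k|\leq N_2(\mathcal{E},B_{k+1})^2$ and $\|y\|_2\leq 3B_{k+1}$ for every $y\in\mathcal{D}_k$.

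The main obstacle is the single-vector concentration bound underlying the chain. For a fixed $y$ of $\ell^2$-norm at most $\rho$ and fixed $S\subseteq[N]$, I would apply Bennett's inequality to the centered sum $\sum_{i\in S}(\xi_i-\delta)y_i$: its variance is $\leq\delta\rho^2$ and each summand is bounded by $\rho$, yielding a tail of the form
$$\mathbb{P}\bigl(\bigl|\textstyle\sum_{i\in S}(\xi_i-\delta)y_i\bigr|>\lambda\bigr)\leq 2\exp\bigl(-c\lambda^2\log(1/\delta)/\rho^2\bigr)$$
in the range $\lambda\lesssim\rho$; the $\log(1/\delta)$ in the exponent is the natural $\psi_2$-scale for a mean-$\delta$ Bernoulli, whose $\psi_2$-norm is of order $1/\sqrt{\log(1/\delta)}$. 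The deterministic $\delta$-mean piece $\delta\sum_{i\in S}y_i$ is bounded by $\delta\sqrt{m}\,\rho$ via Cauchy-Schwarz and is absorbed into the $\sqrt{\delta m}$ part of the target bound. To remove the supremum over $S$ with $|S|\leq m$, I would use the elementary estimate $\sup_{|S|\leq m}|\sum_{i\in S}\xi_i y_i|\leq\sqrt{m}\,(\sum_i\xi_i y_i^2)^{1/2}$ combined with Bernstein concentration of the inner sum of squares.

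Taking a union bound over $\mathcal{D}_k$ and integrating the $L^q$ tail gives, at each scale,
$$Y_k \ll \log^{-1/2}(1/\delta)\,B_{k+1}\bigl(\sqrt{\log N_2(\mathcal{E},B_{k+1})}+\sqrt{q}\bigr).$$
Summing in $k$, the first summand dyadically compares to $\log^{-1/2}(1/\delta)\int_0^B\sqrt{\log N_2(\mathcal{E},t)}\,dt$, producing the chaining integral, while the geometric series $\sum_k B_{k+1}\sqrt{q}$ is absorbed into the $\sqrt{q/\log(1/\delta)}$ term of the first bracket. The base contribution over $\mathcal{E}_0$ reduces to $\sup_{|S|\leq m}|\sum_{i\in S}\xi_i c_i|$ for a single vector $c$; Bennett together with the Cauchy-Schwarz bound above contributes $\sqrt{\delta m}$ to the first bracket. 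Combining the three pieces yields the claimed estimate.
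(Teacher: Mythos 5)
This lemma is not proved in the paper; it is quoted from Bourgain's $\Lambda(p)$-set paper \cite{BourLp} (p.~229), so there is no in-paper argument to compare against, and your proposal must be judged as a standalone proof. The broad Dudley-chaining framework you set up is the right one, and it is also what underlies Bourgain's proof. Your subgaussian tail for a \emph{fixed} pair $(y,S)$ is essentially correct: a centered mean-$\delta$ Bernoulli has $\psi_2$-norm $\asymp \log^{-1/2}(1/\delta)$, and additivity of $\psi_2^2$-norms over independent summands gives $\mathbb{P}\bigl(|\sum_{i\in S}(\xi_i-\delta)y_i|>\lambda\bigr)\lesssim\exp\bigl(-c\lambda^2\log(1/\delta)/\rho^2\bigr)$ for all $\lambda$. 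You should get this via the moment-generating function rather than citing Bennett, whose conclusion is a Poisson-type tail that is stronger but not of the quadratic form you write down.

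The genuine gap is in removing the supremum over $S$. The inequality $\sup_{|S|\leq m}|\sum_{i\in S}\xi_iy_i|\leq\sqrt m\,(\sum_i\xi_iy_i^2)^{1/2}$ is correct, but it is far too lossy for the deviation term. Write $W:=\sum_i\xi_iy_i^2$; a Bernstein/Poisson tail gives $\|W\|_{L^{q/2}}\lesssim\rho^2(\delta+q)$, hence $\|\sqrt m\,W^{1/2}\|_{L^q}\lesssim\rho\bigl(\sqrt{\delta m}+\sqrt{mq}\bigr)$. The fluctuation $\rho\sqrt{mq}$ overshoots the target $\rho\sqrt{q/\log(1/\delta)}$ by a factor $\sqrt{m\log(1/\delta)}$; once $q$ is upgraded to $q+\log N_2(\mathcal{E},B_{k+1})$ for the union bound and the result summed over scales, this would plant a spurious $\sqrt m$ in the entropy integral, which does not appear in the conclusion. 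One also cannot union-bound the fixed-$S$ tail over all admissible $S$, since that costs $\log\binom{N}{m}$. What is actually needed, and what Bourgain proves, is the single-vector estimate
$\bigl\|\sup_{|S|\leq m}\bigl|\sum_{i\in S}\xi_iy_i\bigr|\bigr\|_{L^q}\lesssim\|y\|_2\bigl[\sqrt{\delta m}+\sqrt{q/\log(1/\delta)}\bigr]$,
whose proof is genuinely more delicate: one stratifies the coordinates of $y$ dyadically by magnitude, controls the binomial count of selected indices in each stratum by a Chernoff bound, and optimizes the allocation of the budget $|S|\leq m$ across strata; the $\log(1/\delta)$ in the denominator emerges because only $O(\log(1/\delta))$ strata contribute in the Poisson regime. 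Your sketch replaces this step with the crude Cauchy--Schwarz reduction, and there the argument breaks down.

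Two side remarks. Your chaining correctly produces a factor $B$ multiplying the first bracket $[\delta m + q/\log(1/\delta)]^{1/2}$; that factor is in fact required (it is present where the paper applies the lemma inside the proof of Lemma~\ref{lem:Bbasic}), so the statement as printed appears to have dropped it. Also, you should record explicitly that the supremum over $S$ commutes with the chaining decomposition $x=\pi_0(x)+\sum_k(\pi_{k+1}(x)-\pi_k(x))$; this is true by subadditivity of the supremum, and it is what justifies carrying the $S$-supremum along into the definition of each $Y_k$.
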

Here, $N_2(\mathcal{E},t)$ denotes the entropy number of $\mathcal{E}$ with respect to the $\ell^2$-distance. In other words, $N_2(\mathcal{E},t)$ is the minimum number of $\ell^2$-balls of radius $t$ needed to cover the set $\mathcal{E}$.

The following lemma (see p. 231 of \cite{BourLp}) is the entropy estimate from Bourgain's work on the $\Lambda(p)$-problem. There it is stated for orthonormal systems uniformly bounded by 1, but it generalizes easily to those uniformly bounded by a fixed constant $A$:
\begin{lemma}\label{lem:entropy} Let $\Phi:=\{\phi_i\}_{i=1}^{N}$ denote an orthonormal system of functions uniformly bounded by $A$. Further let $1\leq m\leq N$ and $2\leq q < \infty$ and define
\[\mathcal{P}_m := \left\{ \sum_{i \in S} a_i \phi_i : \sum a_i^2 \leq 1 , |S| \leq m \right\}.\]
Then, for some $v_q >2$,
\begin{equation}
\log \left( N_q(\mathcal{P}_{m},t) \right) \ll_A C_q m \log\left(\frac{N}{m}+1\right) t^{-v_q} \text{ for } t>\frac{1}{2}
\end{equation}
\begin{equation}
\log \left( N_q(\mathcal{P}_{m},t )\right) \ll_A C_q m \log\left(\frac{N}{m}+1\right) \log(1/t) \text{ for } 0<t \leq \frac{1}{2}.
\end{equation}
\end{lemma}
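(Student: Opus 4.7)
The plan is to follow Bourgain's argument from \cite{BourLp}, which proves the statement for systems uniformly bounded by $1$, and observe that the generalization to systems uniformly bounded by $A$ amounts to absorbing powers of $A$ into the implicit constants. The fundamental structural decomposition is $\mathcal{P}_m = \bigcup_{|S| \leq m} \mathcal{B}_S$, where $\mathcal{B}_S := \{\sum_{i \in S} a_i \phi_i : \sum_{i \in S} a_i^2 \leq 1\}$ is the image of the $\ell^2$-unit ball in $\mathbb{R}^S$ under the linear map $T_S$. The number of admissible index sets is bounded by $\sum_{k \leq m} \binom{N}{k} \leq (eN/m + e)^m$, whose logarithm is $O(m \log(N/m+1))$. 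This yields the structural $m\log(N/m+1)$ factor when we union-bound covers of the individual $\mathcal{B}_S$.

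For the small-scale regime $0 < t \leq 1/2$, I would use a volumetric covering argument. Since $|\phi_i(x)| \leq A$ everywhere, for $f = T_S a \in \mathcal{B}_S$ we have $\|f\|_\infty \leq A \sum_{i \in S}|a_i| \leq A\sqrt{|S|} \leq A\sqrt{m}$, so $\mathcal{B}_S$ lies in an $m$-dimensional subspace of $L^q$ on which $T_S$ has $L^q$ operator norm at most $A\sqrt{m}$. A standard volume comparison in that subspace yields $\log N_q(\mathcal{B}_S, t) \ll m \log(C A \sqrt{m}/t)$. Combining with the union bound over $S$ and absorbing the $\log m$ into $\log(N/m+1)\log(1/t)$ (which dominates in this regime) gives $\log N_q(\mathcal{P}_m, t) \ll_A m \log(N/m+1) \log(1/t)$.

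For the large-scale regime $t > 1/2$, I would follow Bourgain's dual (Maurey-type) empirical method. Given $f = \sum_{i \in S} a_i \phi_i$ with $\sum a_i^2 \leq 1$, one writes $f$ as a finite signed mixture and approximates it by an empirical sum $\frac{1}{k} \sum_{j=1}^{k} \tilde{\phi}_j$ of $k \asymp t^{-v_q}$ independent samples drawn according to the distribution proportional to $|a_i|^2$ on $S$. A Khintchine-type $L^q$ moment computation---where the exponent $v_q > 2$ emerges from the best $L^q$ moment inequality for sums of symmetric random terms bounded by $A$---shows that the $L^q$-error of this approximation is $\ll t$ with positive probability. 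A discrete net of size $(mN)^{O(k)}$ over the possible sample outcomes and weight quantizations then yields $\log N_q(\mathcal{P}_m, t) \ll_A m \log(N/m+1) t^{-v_q}$.

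The main obstacle is the large-$t$ regime: the polynomial improvement $t^{-v_q}$ beyond the volumetric $\log(1/t)$ bound requires the Khintchine-type $L^q$ moment estimates that form the core of Bourgain's empirical method, together with careful tracking of the $A$-dependence through these moment inequalities. Every appearance of the uniform bound $\|\phi_i\|_\infty \leq A$ contributes at most an $A^{O_q(1)}$ factor, which is harmlessly absorbed into the $\ll_A$ implicit constant; no new ideas beyond those in \cite{BourLp} are required to complete this bookkeeping.
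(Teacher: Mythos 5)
The paper itself does not prove this lemma: it is quoted directly from Bourgain's $\Lambda(p)$ paper \cite{BourLp} (p.~231), with the remark that the extension from $\|\phi_i\|_\infty\le 1$ to $\|\phi_i\|_\infty\le A$ is routine. So there is no ``paper's proof'' to compare against, and your overall plan --- reproduce Bourgain's argument while tracking powers of $A$ --- is exactly what the authors intend.

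That said, the small-scale half of your sketch has a genuine gap. You bound $\mathcal{B}_S$ volumetrically using only the operator-norm estimate $\|T_S\|_{\ell^2\to L^q}\le A\sqrt m$, obtaining $\log N_q(\mathcal{B}_S,t)\ll m\log\bigl(CA\sqrt m/t\bigr)$, and then assert that the resulting $\tfrac12 m\log m$ term ``is absorbed into $m\log(N/m+1)\log(1/t)$, which dominates in this regime.'' This absorption fails when $m$ is comparable to $N$ and $t$ is near $1/2$: there $\log(N/m+1)$ and $\log(1/t)$ are both $O(1)$ while $\log m\sim\log N$, so your bound is off by a full factor of $\log N$. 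Since the lemma is invoked downstream (via Lemma \ref{lem:Bbasic} through Corollary \ref{cor:Lpart2}) with $m$ and $L$ that are only sub-polynomially smaller than $N$, this extra $\log$ would propagate and degrade every subsequent exponent, so it is not a harmless bookkeeping slip.

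The fix is to replace the crude operator-norm diameter with the \emph{Gaussian mean} of the $L^q$ norm on $V_S=\operatorname{span}\{\phi_i:i\in S\}$. By Khintchine/Gaussian moment bounds together with $|\phi_i|\le A$, one has $\mathbb{E}_g\bigl\|\sum_{i\in S}g_i\phi_i\bigr\|_q\ll_q A\sqrt m$, and this is the quantity that controls the entropy, not the $\ell^2\!\to\!L^\infty$ norm. One can then either (i) use the dual Sudakov inequality to cover $\mathcal{B}_S$ at scale $\tfrac12$ by $\exp(O_{A,q}(m))$ balls and cover each such $L^q$-ball of radius $\tfrac12$ by $t$-balls of the \emph{same shape}, costing only $\exp(O(m\log(1/t)))$ with no volume-ratio loss; or (ii) invoke the Urysohn/Jensen lower bound $\bigl(\operatorname{vol}(B_q\cap V_S)/\operatorname{vol}(B_2\cap V_S)\bigr)^{1/m}\ge 1/M(X)\gg_{A,q}1$, which shows the volume ratio is $C(A,q)^m$ rather than the $(A\sqrt m)^m$ your operator-norm argument tacitly allows. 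Either route yields the asserted $\log N_q(\mathcal{B}_S,t)\ll_{A,q} m\log(2/t)$ cleanly, and this is, in essence, what Bourgain does. The large-$t$ empirical (Maurey-type) half of your sketch is the right idea, but as written it is too schematic to check that the exponent $v_q>2$ actually emerges; you would need to carry out the Rosenthal/Khintchine moment computation, and the same Gaussian-mean ingredient above is what supplies the needed cancellation there as well.
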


The following lemma appears as Theorem 5.2 in \cite{TalagrandSmooth} (see also \cite{TalagrandSelect} and \cite{TalagrandGeneric}). This deep and general result is a key technical component of Talagrand's alternate approach to the $\Lambda(p)$-problem and is closely related to what is often called the majorizing measures theorem.
\begin{lemma}\label{lem:Talagrand} Consider an operator $U$ from $\ell_{N}^{2}$ to the Banach space of real valued functions on $\mathbb{T}$ with a norm $||\cdot||$. Further assume that $||\cdot|| \leq || \cdot ||_{p}$ for some $p>2$. Now consider $N$ independent mean $\delta$ selectors $\{\xi_i\}_{i=1}^{N}$ and consider the random subset $S := \{i \in [N] : \xi_i = 1 \}$. Then, if we denote by $U|_{S}$ the restriction of domain of the operator $U$ to sequences supported on $S$, we have that
\begin{equation}\label{equ:Tala}
 \mathbb{E}\left[ || U|_{S} ||^2 \right]\ll \frac{||U||^2 + C_p \log(N) }{ \log(1/\delta) }.
\end{equation}
\end{lemma}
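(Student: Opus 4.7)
The plan is to follow Talagrand's generic chaining strategy for selector processes, combining a dualization step with an entropy/chaining argument that exploits the hypothesis $\|\cdot\| \le \|\cdot\|_p$.

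First, I would reformulate the operator norm as the supremum of a quadratic selector process. Writing $u_i := U(e_i)$ and letting $B^* \subset X^*$ be the dual unit ball, duality (plus Cauchy--Schwarz in the $\ell^2$ variable, as in the derivation of $\|U\|^2 = \sup_g \|U^*g\|_2^2$) gives
\[
\|U|_S\|^2 \;=\; \sup_{g \in B^*} \sum_{i \in S} c_i(g)^2 \;=\; \sup_{g \in B^*} \sum_{i=1}^N \xi_i\, c_i(g)^2,
\qquad c_i(g) := \langle u_i, g\rangle.
\]
Splitting $\xi_i = \delta + (\xi_i - \delta)$ and using $\sup_g \sum_i c_i(g)^2 = \|U\|^2$, we reduce to controlling the centered process
\[
\mathbb{E}\sup_{g \in B^*} \sum_i (\xi_i-\delta)\,c_i(g)^2,
\]
since the deterministic piece contributes $\delta\|U\|^2 \le \|U\|^2/\log(1/\delta)$ (as $\delta \log(1/\delta) \le 1/e$).

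Next, I would set up a generic chaining argument for this selector process. The relevant metrics on $B^*$ are $d_2(g,g') := (\sum_i (c_i(g)^2-c_i(g')^2)^2)^{1/2}$ and $d_\infty(g,g') := \sup_i |c_i(g)^2 - c_i(g')^2|$, since the centered selector sum $\sum_i (\xi_i-\delta)(c_i(g)^2 - c_i(g')^2)$ obeys a Bernstein-type tail governed by these two quantities. Talagrand's generic chaining theorem for mixed sub-Gaussian/sub-exponential processes then bounds the expected supremum by a sum of a $\gamma_2$-functional (with respect to $\sqrt{\delta}\,d_2$) and a $\gamma_1$-functional (with respect to $d_\infty$) on $B^*$.

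The key analytic input is the hypothesis $\|\cdot\| \le \|\cdot\|_p$, which by duality forces every $g \in B^*$ to satisfy $\|g\|_{p'} \le 1$. Via H\"older's inequality this gives uniform $L^{p'}$ control of the coefficients $c_i(g)$ and hence of the squared-coefficient vectors $(c_i(g)^2)_i$, allowing the $\gamma_2$ and $\gamma_1$ functionals to be estimated in terms of a covering-number integral. I would bound this integral by splitting at a threshold scale: above the threshold one uses the $\ell^2$-diameter $\|U\|$ of $\{(c_i(g))_i : g \in B^*\}$; below the threshold one uses the trivial bound $\log N_2 \ll \log\binom{N}{k}$ on an $N$-dimensional $\ell^{p'}$-ball to obtain the $C_p \log(N)$ contribution. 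Dividing through by $\log(1/\delta)$ (this factor appears as the reciprocal of the variance-normalization in the Bernstein increment estimate, i.e.\ because the typical selector has variance $\asymp \delta$ while its sub-exponential scale forces a $1/\log(1/\delta)$ gain at each chaining level) produces exactly the right-hand side of \eqref{equ:Tala}.

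The main obstacle is the chaining step: executing Talagrand's selector-process bound in a way that yields \emph{both} the $\|U\|^2$ term (controlled by the coarse $\ell^2$ geometry of the image) and the $C_p \log(N)$ term (controlled by the fine-scale $\ell^{p'}$ entropy tied to $N$) with the correct $1/\log(1/\delta)$ normalization in front. This is precisely the content of Talagrand's Theorem 5.2 in \cite{TalagrandSmooth}, whose full proof uses the majorizing measures machinery rather than a direct Dudley-type integral; for our purposes we quote the result and defer its proof to that reference.
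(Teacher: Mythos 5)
The paper does not prove this lemma at all: it is quoted verbatim from the literature as Theorem 5.2 of \cite{TalagrandSmooth}, with a one-line remark that it is ``a deep and general result'' tied to the majorizing measures theorem. Your proposal ultimately lands in the same place --- after sketching a generic-chaining strategy, you explicitly state that the full argument requires Talagrand's machinery and that you quote the result --- so the two ``proofs'' agree in substance: both cite.

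A few comments on the surrounding sketch. The reduction $\|U|_S\|^2 = \sup_{g\in B^*}\sum_{i\in S}c_i(g)^2$ and the decentering $\xi_i = \delta + (\xi_i-\delta)$ with $\delta\|U\|^2 \le \|U\|^2/\log(1/\delta)$ are correct and are exactly how one would begin. The dual-ball inclusion $B^*\subseteq B_{p'}$ from $\|\cdot\|\le\|\cdot\|_p$ is also right. However, the middle of your outline reads as if a standard $\gamma_1+\gamma_2$ generic-chaining estimate driven by covering-number integrals would finish the job; as the paper itself emphasizes (both in the preamble to Lemma~\ref{lem:Talagrand} and in Section~\ref{sec:remarks}), plain Dudley-type chaining is \emph{not} enough here, and Talagrand's proof genuinely uses the majorizing measures construction to get the clean $1/\log(1/\delta)$ gain uniformly over both the $\|U\|^2$ and $C_p\log N$ regimes. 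So your plan should be read as heuristic motivation for why the statement is plausible, not as a path that you could carry through by hand --- but since you flag this and defer to the reference, your answer is consistent with what the paper does.
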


The following lemma appears as Theorem 7.4 in \cite{Ledoux} and provides a strong concentration bound for an empirical processes. This is also due to Talagrand \cite{TalagrandProd}, although from work on a somewhat different topic.
\begin{lemma}\label{lem:Ledoux} Let $Y_1,\ldots, Y_N$ denote random variables taking values in a Banach space $W$, and $\mathcal{F}$ a countable collection of measurable functions on $W$ pointwise bounded by $C$ (i.e. $|f| \leq C$). Set $Z := \sup_{f \in \mathcal{F}} \sum_{i=1}^{N} f(Y_i)$ and $\sigma^{2}=\sup_{f \in \mathcal{F}} \sum_{i=1}^{N} |f(Y_i)|^2$. Then, for all $\tau>0$,
\begin{equation}
\mathbb{P} \left( |Z - \mathbb{E}(Z)| \geq \tau \right) \leq 3 \exp\left(-\frac{\tau}{\kappa C} \log\left(1+ \frac{C\tau}{\mathbb{E}\sigma^2}\right)\right).
\end{equation}
for some absolute constant $\kappa$.
\end{lemma}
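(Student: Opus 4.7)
The plan is to obtain both tails from Chernoff's inequality applied to the log moment generating function $F(\lambda) := \log \mathbb{E}[e^{\lambda(Z - \mathbb{E}Z)}]$. The target estimate is $F(\lambda) \leq (\mathbb{E}\sigma^2/C^2)\,h(\lambda C)$ with $h(u) := e^u - 1 - u$; its Legendre transform $h^*(u) = (1+u)\log(1+u) - u$ satisfies $h^*(u) \geq \tfrac{u}{2}\log(1+u)$, so
\[ \mathbb{P}(Z - \mathbb{E}Z \geq \tau) \;\leq\; \exp\!\left(-\tfrac{\mathbb{E}\sigma^2}{C^2}\,h^*\!\left(\tfrac{C\tau}{\mathbb{E}\sigma^2}\right)\right) \;\leq\; \exp\!\left(-\tfrac{\tau}{2C}\log\!\left(1 + \tfrac{C\tau}{\mathbb{E}\sigma^2}\right)\right),\]
which matches the stated form up to the universal constant $\kappa$. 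The lower tail will be handled identically by running the argument with $\lambda < 0$; the factor of $3$ in front of the exponential absorbs the two-sided union bound and the centering term.

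To prove the MGF estimate I would use the entropy method on product spaces. For each $i$, let $Y_i'$ be an independent copy of $Y_i$ and let $Z_i'$ denote $Z$ computed with $Y_i'$ in place of $Y_i$. Tensorization of entropy combined with the scalar modified logarithmic Sobolev inequality yields
\[ \mathrm{Ent}(e^{\lambda Z}) \;\leq\; \sum_{i=1}^{N} \mathbb{E}\!\left[e^{\lambda Z}\,h\!\bigl(-\lambda(Z - Z_i')\bigr)\right].\]
Picking a measurable near-maximizer $\hat f \in \mathcal{F}$ of $Z$ depending on $(Y_1,\ldots,Y_N)$ and using $Z_i' \geq \sum_{j \ne i} \hat f(Y_j) + \hat f(Y_i')$, one obtains the pointwise bounds $(Z - Z_i')_+ \leq (\hat f(Y_i) - \hat f(Y_i'))_+$, $|Z - Z_i'| \leq 2C$, and $\sum_i (Z - Z_i')_+^2 \leq \sum_i \hat f(Y_i)^2 \leq \sigma^2$ by definition of $\sigma^2$. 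The negative-part contribution to $h(-\lambda(Z-Z_i'))$ is controlled after taking conditional expectation in $Y_i'$ (since $\mathbb{E}_{Y_i'}[Z - Z_i'] \leq 0$ by maximality, and $h$ is convex), while on the positive part one uses the estimate $h(-x) \leq (h(\lambda C)/(\lambda C)^2)\,x^2$ for $|x| \leq \lambda C$. Summing produces a right-hand side of $(\mathbb{E}\sigma^2/C^2)\,h(\lambda C)\,\mathbb{E}[e^{\lambda Z}]$.

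Recognising that $\mathrm{Ent}(e^{\lambda Z}) = (\lambda F'(\lambda) - F(\lambda))\,\mathbb{E}[e^{\lambda Z}]$ converts the bound into the first-order ODE $\lambda F'(\lambda) - F(\lambda) \leq (\mathbb{E}\sigma^2/C^2)\,h(\lambda C)$; its left side is $\lambda^2 (F(\lambda)/\lambda)'$, so dividing by $\lambda^2$ and integrating from $0$ (with $F(0) = F'(0) = 0$) yields the target $F(\lambda) \leq (\mathbb{E}\sigma^2/C^2)\,h(\lambda C)$. The main obstacle is the decoupling step in which the random quantity $\sum_i (\hat f(Y_i) - \hat f(Y_i'))_+^2$ must be separated from the factor $e^{\lambda Z}$ inside the expectation so that the random $\sigma^2$ can be replaced by its expectation; this is the technical heart of the argument and is precisely where Talagrand's original induction on $N$ and Ledoux's entropy-method proof diverge in their bookkeeping. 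Once this decoupling is executed cleanly, the constant $\kappa$ absorbs the factor $2$ from $h^*(u) \geq \tfrac{u}{2}\log(1+u)$ together with any remaining numerical slack from the quadratic estimate on $h$.
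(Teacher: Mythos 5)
The paper does not prove this lemma at all: it is quoted as Theorem 7.4 of Ledoux's monograph (due to Talagrand), so there is no internal argument to compare against, and your proposal amounts to an attempt to reprove a deep external result from scratch. As such it has a genuine gap, and you have in fact named it yourself: the step where the random quantity $\sigma^2=\sup_{f}\sum_i f(Y_i)^2$ gets replaced by $\mathbb{E}\sigma^2$. After tensorization and the quadratic bound on $h$, the entropy method delivers an estimate of the shape $\mathrm{Ent}(e^{\lambda Z})\ll \lambda^2\,\mathbb{E}\bigl[e^{\lambda Z}\,\sigma^2\bigr]$, and there is no soft way to pass to $\mathbb{E}[\sigma^2]\,\mathbb{E}[e^{\lambda Z}]$: $Z$ and $\sigma^2$ are typically positively correlated, so the naive decoupling inequality goes in the wrong direction. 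Overcoming this is precisely the content of Talagrand's theorem; the known routes either run Talagrand's induction on $N$ in product spaces or, in the entropy-method treatments (Massart, Bousquet, Rio, Klein), bring in a separate self-bounding concentration argument for $\sigma^2$ itself (which is again the supremum of an empirical process) and bootstrap the resulting Laplace-transform bounds. Saying the constant $\kappa$ will absorb the slack ``once this decoupling is executed cleanly'' leaves the theorem unproved, because that decoupling is the theorem.

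Two further problems. The pointwise chain $\sum_i(Z-Z_i')_+^2\le\sum_i\hat f(Y_i)^2\le\sigma^2$ is false as written: $(\hat f(Y_i)-\hat f(Y_i'))_+^2$ can exceed $\hat f(Y_i)^2$ when $\hat f(Y_i')<0$ (the functions are not assumed nonnegative), so the variance term you feed into the entropy bound genuinely involves both copies and, even after integrating out $Y_i'$, remains coupled to $e^{\lambda Z}$. And the lower tail is not obtained by ``running the argument with $\lambda<0$'': the inequality $Z-Z_i'\le \hat f(Y_i)-\hat f(Y_i')$ is one-sided, coming from near-maximality for the configuration containing $Y_i$, and for negative $\lambda$ the exponential weights the opposite tail of $Z-Z_i'$; this asymmetry is why the left-tail bound for empirical processes (Klein--Rio) needs a separate and more delicate argument. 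For the purposes of this paper the correct move is the one the authors make: treat the statement as a black-box citation rather than attempt a proof.
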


We will also need the following standard fact:

\begin{lemma}\label{lem:french}
Let $(V_y)_{y \in E}$ be a set of real, non-negative random variables indexed by a finite set $E$ of size $N$. Let $q \geq 1$. Then
\[ \left|\left| \sup_y V_y \right|\right|_{q} \ll \sup_y ||V_y||_{q + \log N}.\]
\end{lemma}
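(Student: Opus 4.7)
The plan is to use a standard moment-raising trick: bound the supremum of non-negative random variables by an $\ell^r$-sum and then choose $r$ large enough that the resulting factor $N^{1/r}$ becomes an absolute constant. The exponent $r = q + \log N$ arises naturally because it is essentially the smallest $r \geq q$ for which $N^{1/r}$ is bounded by $e$, and it is forced (up to constants) by balancing a trivial entropy loss against the desire to keep $r$ close to $q$.

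First I would apply the monotonicity of $L^p$-norms on a probability space: for any $r \geq q$,
\[\left\|\sup_y V_y\right\|_q \leq \left\|\sup_y V_y\right\|_r.\]
Next, using non-negativity of the $V_y$, we have the pointwise bound $\sup_y V_y^r \leq \sum_{y \in E} V_y^r$, so taking expectations, applying linearity, and extracting the $r$-th root gives
\[\left\|\sup_y V_y\right\|_r \leq \left(\sum_{y \in E} \mathbb{E}\, V_y^r\right)^{1/r} \leq N^{1/r} \sup_{y \in E} \|V_y\|_r.\]
Specializing to $r := q + \log N$, we have $N^{1/r} = \exp\!\left(\log N / (q + \log N)\right) \leq e$, and chaining the two inequalities yields the desired bound.

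There is no substantive obstacle here; the only decision is the choice of $r$, and the two moves (monotonicity in $p$ on a probability space, followed by replacing the supremum by a sum) are both elementary. The lemma is essentially a bookkeeping device allowing us to trade a supremum over a moderately large index set for a slightly larger moment of an individual random variable, at the cost of a universal constant.
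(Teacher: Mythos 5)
Your proof is correct and is essentially the same argument as the paper's. The paper phrases the first step as Jensen's inequality $(\mathbb{E}[Z])^{\eta}\leq\mathbb{E}[Z^{\eta}]$ applied to $Z:=\sup_y V_y^q$, which under the substitution $r=q\eta$ is exactly the $L^q\le L^r$ monotonicity you invoke; the subsequent steps (bounding the supremum by the sum over the $N$-element index set, then choosing $r=q+\log N$ so that $N^{1/r}\le e$) coincide.
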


\begin{proof}
We define $Z:= \sup_y V_y^{q}$ and let $\eta \geq 1$ be a parameter to be set later. Applying Jensen's inequality, we have $\left( \mathbb{E}[Z]\right)^{\eta} \leq \mathbb{E}[Z^\eta]$. We note that $Z^\eta \leq \sum_y V_y^{q \eta}$, so by the linearity of expectation we have
\[\left( \mathbb{E}[Z]\right)^{\eta} \leq \sum_y \mathbb{E}[V_y^{q \eta}] \leq N \sup_y ||V_y||_{q \eta}^{q\eta}.\]
We then observe:
\[\left|\left| \sup_y V_y \right|\right|_{q} = \left( \mathbb{E}[Z]\right)^{\frac{1}{q}} \leq N^{\frac{1}{q \eta}} \sup_y ||V_y||_{q \eta}.\]
We then choose $\eta$ so that $\eta q = q + \log N$. We then have $N^{\frac{1}{q \eta}} \leq N^{\frac{1}{\log N}} \ll 1$, so the lemma follows.
\end{proof}

Finally, we recall Pittel's inequality, which can be found on p.38 in \cite{Bo}, for example:
\begin{lemma}\label{lem:SelToSet} For positive integers $ 1 \leq m \leq N$, we let $[N] \choose m$ denote the set of all subsets of $[N]$ of size $m$. We let $S_m$ denote a uniformly random element of $[N] \choose m$. We let $\widetilde{S}$ denote a subset of $[N]$ chosen by including each element of $[N]$ independently with probability $\delta = m/N$. Then, for any event $E$ described as a set of subsets of $[N]$, we have
\[ \mathbb{P}\left[ S_m \in E \cap {[N] \choose m}\right] \ll \sqrt{m}\cdot \mathbb{P}\left[ \widetilde{S} \in E\right].\]
\end{lemma}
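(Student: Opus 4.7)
The plan is to exploit the standard two-step description of $\widetilde{S}$: first sample its size $|\widetilde{S}|$, which has a Binomial$(N,\delta)$ distribution with $\delta=m/N$, and then conditionally on $|\widetilde{S}|=k$ the set $\widetilde{S}$ is uniformly distributed over $\binom{[N]}{k}$. This conditional uniformity is immediate from the fact that the joint probability of any particular $k$-subset under independent $\delta$-selectors is $\delta^k(1-\delta)^{N-k}$, which depends only on $k$.

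From this I would first write, for any event $E$,
\[
\mathbb{P}\bigl[\widetilde{S}\in E\bigr]
=\sum_{k=0}^{N}\mathbb{P}\bigl[|\widetilde{S}|=k\bigr]\,
\mathbb{P}\Bigl[S_k\in E\cap\tbinom{[N]}{k}\Bigr]
\;\geq\;\mathbb{P}\bigl[|\widetilde{S}|=m\bigr]\cdot\mathbb{P}\Bigl[S_m\in E\cap\tbinom{[N]}{m}\Bigr].
\]
Rearranging, the lemma reduces to the claim
\[
\mathbb{P}\bigl[|\widetilde{S}|=m\bigr]
=\binom{N}{m}\Bigl(\tfrac{m}{N}\Bigr)^{m}\Bigl(1-\tfrac{m}{N}\Bigr)^{N-m}\;\gg\;\frac{1}{\sqrt{m}},
\]
i.e., that a Binomial$(N,m/N)$ hits its mean with probability at least a constant multiple of $1/\sqrt{m}$.

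The second step is then the Stirling-based local estimate. Applying Stirling's formula to the three factorials in $\binom{N}{m}$ gives
\[
\mathbb{P}\bigl[|\widetilde{S}|=m\bigr]
\;\sim\;\frac{1}{\sqrt{2\pi\,m(N-m)/N}},
\]
where the implicit constants are uniform for $1\leq m\leq N-1$ (the boundary case $m=N$ is trivial since then $\widetilde{S}=[N]$ deterministically). I would then split into two easy regimes. If $m\leq N/2$, then $(N-m)/N\geq 1/2$ and the bound is $\gg 1/\sqrt{m}$ directly. If $N/2<m<N$, then $m(N-m)/N\leq N-m\leq m$, so the bound is $\gg 1/\sqrt{N-m}\geq 1/\sqrt{m}$. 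Combining with the inequality of the previous paragraph produces
\[
\mathbb{P}\Bigl[S_m\in E\cap\tbinom{[N]}{m}\Bigr]\;\leq\;\frac{\mathbb{P}[\widetilde{S}\in E]}{\mathbb{P}[|\widetilde{S}|=m]}\;\ll\;\sqrt{m}\,\mathbb{P}\bigl[\widetilde{S}\in E\bigr],
\]
which is the desired conclusion.

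There is no real obstacle here; the only point requiring a sliver of care is making the Stirling estimate uniform across all $m\in[1,N-1]$ rather than only in the central regime $m\asymp N/2$, which is handled by the two-case split above. The rest is just the standard Poissonization/depoissonization trick relating independent selectors to a fixed-size uniform subset.
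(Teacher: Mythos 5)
Your proof is correct and is the standard argument for Pittel's inequality: condition on $|\widetilde{S}|$, use that $\widetilde{S}$ conditioned on its size is uniform, and then give a local (Stirling) lower bound on $\mathbb{P}[|\widetilde{S}|=m]$. The paper itself does not prove this lemma but simply cites Bollob\'as (p.~38), where the proof runs along exactly these lines, so your approach matches the intended source. One small remark: the two-case split is not actually needed, since $(N-m)/N\le 1$ gives $m(N-m)/N\le m$ outright, and hence $\mathbb{P}[|\widetilde{S}|=m]\gg 1/\sqrt{m(N-m)/N}\ge 1/\sqrt{m}$ uniformly in $1\le m\le N-1$ (with $m=N$ trivial as you note).
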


\section{Dyadic Decompositions}

Let $\Phi = \{\phi_i \}_{i=1}^{N}$ be an orthonormal system and fix $f= \sum_{n \in [N]} a_n \phi_n$ in the span of the system. We now perform a dyadic decomposition of $f$ in terms of the $\ell^2$ weight of the coefficients (which we refer to as `mass'). We reproduce the description of this decomposition from \cite{LewkoO}.

We  define the \emph{mass} of a subinterval $I \subseteq [N]$ as $M(I) := \sum_{n \in I} |a_{n}|^2$. By normalization, we may assume that $M([N])=1$. We define $I_{0,1} := [N]$ and we iteratively define $I_{k,s}$ for $1\leq s\leq 2^k$ as follows. Assuming we have already defined $I_{k-1,s}$ for all $1 \leq s \leq 2^{k-1}$, we will define $I_{k,2s-1}$ and $I_{k,2s}$, which are subintervals of $I_{k-1,s}$. $I_{k,2s-1}$ begins at the left endpoint of $I_{k-1,s}$ and extends to the right as far as possible while covering strictly less than half the mass of $I_{k-1,s}$, while $I_{k,2s}$ ends at the right endpoint of $I_{k-1,s}$ and extends to the left as far as possible while covering at most half the mass of $I_{k-1,s}$. More formally, we define $I_{k,2s-1}$ as the maximal subinterval of $I_{k-1,s}$ which contains the left endpoint of $I_{k-1,s}$ and satisfies $M(I_{k,2s-1}) < \frac{1}{2} M(I_{k-1,s})$. We also define $I_{k,2s}$ as the maximal subinterval of $I_{k-1,s}$ which contains the right endpoint of $I_{k-1,s}$ and satisfies $M(I_{k,2s}) \leq \frac{1}{2} M(I_{k-1,s})$. We note that these subintervals are disjoint. We may express $I_{k-1,s} = I_{k,2s-1} \bigcup I_{k,2s} \bigcup i_{k,s}$,  where $i_{k,s} \in I_{k-1,s}$. In other words, $i_{k,s}$ denotes the single element which lies between $I_{k,2s-1}$ and $I_{k,2s}$ (note that such a point always exists because we have required that $I_{k,2s-1}$ contains strictly less than half of the mass of the interval). Here it is acceptable for some choices of the intervals in this decomposition to be empty. By construction we have that

\begin{equation}\label{eq:mass}
M(I_{k,s}) \leq 2^{-k}.
\end{equation}

For $J \subseteq [N]$, we define
\[f_J(x) = \sum_{n\in J} a_n \phi_n(x).\]
We also define
\[ \tilde{f}_J(x) := \max_{I \subseteq J} \left|\sum_{n \in I} a_n \phi_n(x)\right| .\]
where $\max_{I \subseteq J}$ denotes the maximum over subintervals.

\begin{lemma}\label{lem:maxde}Let $\Phi = \{\phi_i \}_{i=1}^{N}$  denote an orthonormal system such that $|\phi_i(x)| \leq A$ for all $i,x$ and for every $f=\sum_{n \in [N]} a_n \phi_n$ in the span of the system one has
$$ ||f ||_{\Gamma_K} \leq \Delta ||f||_{2}$$
for some constant $\Delta \geq 2$. Then we may decompose the maximal function $\tilde{f} = f_1 +f_2$ where
$$||f_1||_{p} \ll_{p} \Delta ||f||_{2}  $$
$$||f_2||_{2} \ll_{p} \Delta \log(N) K^{(2-p)/2}||f||_{2}. $$
\end{lemma}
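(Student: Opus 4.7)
The plan is to combine the dyadic mass-balanced decomposition reviewed above with Lemma~\ref{lem:pieces} applied interval-by-interval. A descent through the tree (down to the least common ancestor of the endpoints of $I$ and then along the right and left spines) shows that every subinterval $I\subseteq[N]$ has a decomposition as a disjoint union of dyadic intervals $I_{k,s}$ and separator points $i_{k,s}$ in which at most $O(1)$ of each type appear at each level $k$. Taking the maximum over $I$ yields the pointwise bound
\[ \tilde{f}(x) \ll \sum_k \max_s |f_{I_{k,s}}(x)| + A\sum_k \max_s |a_{i_{k,s}}|. \]

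For each $(k,s)$, the hypothesis $\|f_{I_{k,s}}\|_{\Gamma_K}\leq\Delta\|f_{I_{k,s}}\|_2$ lets me apply Lemma~\ref{lem:pieces} to write $f_{I_{k,s}}=g^{(1)}_{k,s}+g^{(2)}_{k,s}$ with $\|g^{(1)}_{k,s}\|_p\ll\Delta\|f_{I_{k,s}}\|_2$ and $\|g^{(2)}_{k,s}\|_2\ll K^{(2-p)/2}\Delta\|f_{I_{k,s}}\|_2$. Using the pointwise inequalities $(\max_s|g^{(i)}_{k,s}|)^{r_i}\leq\sum_s|g^{(i)}_{k,s}|^{r_i}$ (with $r_1=p,\ r_2=2$), the mass bound $M(I_{k,s})\leq 2^{-k}\|f\|_2^2$, and the total-mass bound $M_k:=\sum_s M(I_{k,s})\leq\|f\|_2^2$, one obtains the per-level estimates
\[ \bigl\|\max_s|g^{(1)}_{k,s}|\bigr\|_p \ll_p \Delta\, 2^{-k(p-2)/(2p)}\|f\|_2, \qquad \bigl\|\max_s|g^{(2)}_{k,s}|\bigr\|_2 \ll \Delta K^{(2-p)/2} M_k^{1/2}. \]
Summing the first estimate over $k$ is a geometric series (convergent because $p>2$) and contributes $\ll_p \Delta\|f\|_2$ to the $L^p$ norm. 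For the second, I combine $M_k\leq\|f\|_2^2$ with the complementary bound $M_k\leq N 2^{-k}\|f\|_2^2$ (since at most $\min(2^k,N)$ level-$k$ intervals can be non-empty) and split the sum at $k=\log_2 N$, so that the tail past that depth decays geometrically regardless of the actual tree depth; this gives $\sum_k M_k^{1/2}\ll\log(N)\|f\|_2$, hence an $L^2$ contribution of size $\ll \Delta\log(N)K^{(2-p)/2}\|f\|_2$. The separator sum is handled in $L^\infty$: since $|a_{i_{k,s}}|^2\leq M(I_{k-1,s})\leq 2^{1-k}\|f\|_2^2$, summing $A\max_s|a_{i_{k,s}}|$ geometrically gives $O(\|f\|_2)=O(\Delta\|f\|_2)$, which is absorbed into the $L^p$ piece.

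Finally, to realize the decomposition as the equality $\tilde{f}=f_1+f_2$ demanded by the statement rather than a mere pointwise upper bound, set $F$ equal to the sum of all $L^p$-controlled pieces (the $\max_s|g^{(1)}_{k,s}|$'s plus the separator sum) and take $f_1:=\min(\tilde{f},F)$ and $f_2:=(\tilde{f}-F)^+$. Then $f_1\leq F$ inherits the required $L^p$ bound, while $0\leq f_2\leq\sum_k\max_s|g^{(2)}_{k,s}|$ inherits the $L^2$ bound, and $f_1+f_2=\tilde{f}$ by construction. I expect the main technical point to be handling the potentially large depth of the mass-balanced tree (geometrically decaying coefficients can easily push its depth well beyond $\log_2 N$); this is controlled precisely by the two-sided bound on $M_k$ above, which forces the effective level count in the $L^2$ estimate to be $\log(N)+O(1)$ and accounts for the $\log(N)$ factor in the stated bound.
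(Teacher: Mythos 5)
Your proof is correct, and the high-level strategy matches the paper's: perform the dyadic mass decomposition, apply Lemma~\ref{lem:pieces} on each dyadic block to split $f_{I_{k,s}}$ into an $L^p$-controlled and an $L^2$-controlled piece, use the spine decomposition of the maximizing interval (at most $O(1)$ dyadic blocks and separators per level), and sum over levels. The one place where you genuinely diverge is in controlling the depth of the mass tree, which is the crux of the $\log(N)$ factor. The paper first writes $f = f_L + f_S$, where $f_S$ collects the coefficients with $|a_n|\le N^{-1}$; since $\tilde f_S \le A$ pointwise it is harmless, and on $f_L$ every non-empty dyadic block has mass $\gtrsim N^{-2}$, so the tree is confined to $O(\log N)$ levels and the $L^2$ sum is a flat sum over that many terms. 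You instead keep $f$ intact and observe that the level mass $M_k = \sum_s M(I_{k,s})$ satisfies both $M_k \le \|f\|_2^2$ and $M_k \le N 2^{-k}\|f\|_2^2$ (since at most $\min(2^k,N)$ blocks are non-empty), so $\sum_k M_k^{1/2} \ll \log N \cdot \|f\|_2$ regardless of the actual tree depth. Both devices give the same bound; yours avoids the small-coefficient split at the cost of requiring the two-sided $M_k$ estimate, and is arguably a bit cleaner since it handles the tail past depth $\log_2 N$ by geometric decay rather than truncation. The rest (geometric series for the $L^p$ piece, pointwise geometric sum for the separators, and the $\min(\tilde f, F)$ / $(\tilde f - F)^+$ split to turn a pointwise majorant into an exact decomposition) is sound; the paper handles the last point more tersely by remarking that a decomposition of a pointwise majorant suffices. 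One small bookkeeping remark: your pointwise bound $\tilde f \ll F + G$ carries an absolute constant, so $F$ and $G$ should be defined to absorb it before applying the $\min/(\cdot)^+$ trick, but this is cosmetic.
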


\begin{proof} We normalize $||f||_{2} =1$ and split $f=f_{L}+f_{S}$ where $f_{L}:= \sum_{ |a_n| > N^{-1}} a_n \phi_n  $ and $f_{S}:= \sum_{ |a_n| \leq N^{-1}} a_n \phi_n $. Now $\tilde{f} \leq \tilde{f_L}+\tilde{f_S}$ and $\tilde{f_S} \leq \sum_{|a_n| \leq N^{-1}} |a_n| |\phi_n| \leq A $. Clearly $|| \tilde{f_S}||_{p} \ll ||f||_2$, so it remains to prove the desired properties for $\tilde{f}_L$.

We next perform the `mass' decomposition described above on $f_L$. On each interval $I_{k,s}$ in the dyadic decomposition of $f_L$, we can apply Lemma \ref{lem:pieces} and the hypothesis $||f ||_{\Gamma_K} \leq \Delta ||f||_{2}$ to express the restriction of $f_L$ to $I_{k,s}$ as the sum of two functions $G_{k,s}$ and $E_{k,s}$ such that $||G_{k,s}||_p \ll \Delta ||f_{I_{k,s}}||_2$ and $||E_{k,s}||_2 \leq \Delta K^{(2-p)/2} ||f_{I_{k,s}}||_2$. Now, at each point $x$, the maximizing subinterval achieving the value $\tilde{f}_L(x)$ can be decomposed into a disjoint union of dyadic intervals (and points) using at most two intervals on each level.
We then have the pointwise inequality
\[ \tilde{f}_{L} \ll \sum_{k=1}^{\lceil \log(N)\rceil} \left(\sum_{s} |G_{k,s} + E_{k,s} |^p \right)^{1/p}  + \sum_{k} \left(\sum_{s} | f_{i_{k,s}} |^p \right)^{1/p}\]
\[\ll \sum_{k=1}^{\lceil \log(N)\rceil} \left(\sum_{s} |G_{k,s}  |^p \right)^{1/p} + \sum_{k=1}^{\lceil \log(N) \rceil} \left(\sum_{s} | E_{k,s} |^p \right)^{1/p} + \sum_{k} \left(\sum_{s} | f_{i_{k,s}} |^p \right)^{1/p} \]
where we have used the condition $|a_n| \geq N^{-1}$ to restrict the sum in $k$ to $\lceil \log(N) \rceil$ values. (We have also used that taking the $\ell_p$-norm of the values of the intervals on a single level provides an upper bound on the highest value taken on that level.)

Note that it suffices to prove that an appropriate decomposition exists for a pointwise majorant.
We treat each of the three sums on right of the above inequality separately. By hypothesis, we have that $||G_{k,s}||_p \leq \Delta ||f_{I_{k,s}}||_{2}$ and $||E_{k,s}||_{2} \leq  \Delta K^{(2-p)/2}||f_{I_{k,s}}||_{2}$.  We then see that
\[ \left|\left|\sum_{k=1}^{\lceil\log(N)\rceil} \left(\sum_{s} |G_{k,s}  |^p \right)^{1/p}\right|\right|_{p}
\leq \sum_{k=1}^{\lceil\log(N)\rceil} \left|\left|\left(\sum_{s} |G_{k,s}  |^p \right)^{1/p}\right|\right|_{p}   \]
\[= \sum_{k=1}^{\lceil\log(N)\rceil} \left(\sum_{s}  || G_{k,s}||_{p}^{p} \right)^{1/p}
\leq  \sum_{k=1}^{\lceil\log(N)\rceil }\Delta 2^{k(1/p - 1/2)} \ll_{p} \Delta.\]
Here we have used that $||f_{I_{k,s}}||_{2} \leq 2^{-k/2}$.

Next we have
\[\left|\left| \sum_{k=1}^{\lceil \log(N)\rceil} \left(\sum_{s} | E_{k,s} |^p \right)^{1/p} \right|\right|_{2}
\leq \left|\left| \sum_{k=1}^{\lceil \log(N) \rceil} \left(\sum_{s} | E_{k,s} |^2 \right)^{1/2} \right|\right|_{2} \]
\[ \leq \sum_{k=1}^{\lceil \log(N)\rceil}  \left( \sum_{s} ||   E_{k,s} ||_{2}^{2} \right)^{1/2} \ll  \Delta  \log(N) K^{(2-p)/2}.\]
Recall that $f_{i_{k,s}}= a_j \phi_j$ for some $j \in [N]$. By hypothesis we have $|\phi_j(x)| \leq A $ for all $x$, furthermore  $|a_j| \leq 2^{-(k-1)/2}$ since $j \in I_{k-1,s'}$ for some $s'$. Thus $|f_{i_{k,s}}| \ll_A 2^{-(k-1)/2}$. Hence, for any $x$,
\[\sum_k \left(\sum_s |f_{i_{k,s}}(x)|^p\right)^{\frac{1}{p}} \ll \sum_{k} \left(\sum_{s} 2^{-(p-2)(k-1)/2} | f_{i_{k,s}}(x) |^2 \right)^{1/p}
\ll \sum_{k}  2^{-(p-2)(k-1)/(2p)} \ll_{p} 1.\]
Since this term is pointwise bounded, clearly its $L^p$ norm is also bounded. Since the decomposition holds for each of the three terms it clearly holds for their sum, completing the proof.
\end{proof}

We now record the following easily verified fact (see Lemma 29 in \cite{LewkoO}).

\begin{lemma}\label{lem:decomposition}We fix $||f||_{2}=1$ (in the span of $\{\phi_i\}_{i=1}^{N}$) and let  $\mathcal{A}$ denote the set of intervals that occur in the mass decomposition of $f$.  For every interval $J \subseteq [N]$, ($J \neq \emptyset$), there exist $\tilde{J}_\ell, \tilde{J}_r \in \mathcal{A}$ and $i_J \in [N]$ such that $\tilde{J}:= \tilde{J}_{\ell} \cup i_J \cup \tilde{J}_r$ is an interval (i.e. $\tilde{J}_\ell, i_J, \tilde{J}_\ell$ are adjacent), $J \subseteq \tilde{J}$, and $M(\tilde{J}) \leq 2M(J)$.
\end{lemma}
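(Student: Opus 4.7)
The plan is to exploit the binary tree structure of $\mathcal{A}$: given $J$, I will first descend to the deepest node of the tree that still contains $J$ and use its dividing point as $i_J$, then build $\tilde{J}_\ell$ and $\tilde{J}_r$ by climbing two natural chains of end-aligned dyadic descendants emanating from that node's two children.

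Concretely, let $I_{k,s}$ denote the unique smallest element of $\mathcal{A}$ with $J \subseteq I_{k,s}$; such a node exists because $I_{0,1} = [N] \in \mathcal{A}$. By minimality, $J$ is contained in neither $I_{k+1,2s-1}$ nor $I_{k+1,2s}$, and since $J$ is itself an interval this forces $i_{k+1,s} \in J$. I set $i_J := i_{k+1,s}$ and decompose $J = J_\ell \cup \{i_J\} \cup J_r$ with $J_\ell := J \cap I_{k+1,2s-1}$ (a possibly empty suffix of $I_{k+1,2s-1}$) and $J_r := J \cap I_{k+1,2s}$ (a possibly empty prefix of $I_{k+1,2s}$).

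For the left half I consider the nested chain of right-end-aligned descendants of $I_{k+1,2s-1}$, namely $I_{k+1,2s-1} \supsetneq I_{k+2,4s-2} \supsetneq I_{k+3,8s-4} \supsetneq \cdots$, obtained by iteratively taking right children (each of which shares its right endpoint with its parent). I let $\tilde{J}_\ell$ be the smallest member of this chain that still contains $J_\ell$, using the convention $\tilde{J}_\ell := \emptyset$ when $J_\ell = \emptyset$; $\tilde{J}_r$ is defined symmetrically from the left-end-aligned chain descending from $I_{k+1,2s}$. Then $\tilde{J} := \tilde{J}_\ell \cup \{i_J\} \cup \tilde{J}_r$ is automatically an interval containing $J$.

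The bound $M(\tilde{J}) \leq 2 M(J)$ is then driven by the asymmetric mass-halving built into the construction of $\mathcal{A}$. Writing $\tilde{J}_\ell = L \cup \{m\} \cup R$ with $R$ its right child in the tree, minimality of $\tilde{J}_\ell$ in the chain gives $R \not\supseteq J_\ell$, and since $J_\ell$ is a suffix of $\tilde{J}_\ell$ this means $J_\ell$ must properly extend past $R$ and hence contain $R \cup \{m\}$. Using the strict inequality $M(L) < M(\tilde{J}_\ell)/2$ from the definition of the left child, $M(J_\ell) \geq M(\tilde{J}_\ell) - M(L) > M(\tilde{J}_\ell)/2$; the parallel argument on the right (using $M(R') \leq M(\tilde{J}_r)/2$ for the right child $R'$ of $\tilde{J}_r$) yields $M(\tilde{J}_r) \leq 2 M(J_r)$. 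Adding these with $|a_{i_J}|^2$ gives $M(\tilde{J}) \leq 2 M(J)$, with the corner cases where $J_\ell$ or $J_r$ is empty being immediate. The only real care point will be lining up the strict and non-strict mass inequalities from the construction so that the constant lands at exactly $2$ rather than something larger.
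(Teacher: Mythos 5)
Your construction is correct, and it is the natural argument one would give for this fact: descend to the deepest node $I_{k,s}$ of the dyadic mass tree containing $J$, take $i_J = i_{k+1,s}$ (which must lie in $J$ by minimality of $I_{k,s}$), and then cover $J_\ell$ and $J_r$ by the last nodes on the right-aligned (resp.\ left-aligned) chains of descendants of $I_{k+1,2s-1}$ (resp.\ $I_{k+1,2s}$) that still contain them. The mass accounting is also handled correctly, and in particular you use the strict/non-strict asymmetry in the definition (left child $<\tfrac12$, right child $\leq\tfrac12$) on exactly the sides where it is available: for $\tilde J_\ell$ the relevant sibling is its left child $L$ with $M(L)<\tfrac12 M(\tilde J_\ell)$, and for $\tilde J_r$ the relevant sibling is its right child $R'$ with $M(R')\leq\tfrac12 M(\tilde J_r)$, which is precisely what makes the factor land at $2$. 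The paper itself does not reproduce a proof here (it cites Lemma 29 of \cite{LewkoO}), but this is the same decomposition-and-chain argument that lemma is built on, so I would class your proposal as correct and essentially following the intended route.
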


\begin{lemma}\label{lem:v2subinterval}Let $\Phi = \{\phi_i \}_{i=1}^{N}$ denote a bounded orthonormal system. Furthermore, let $M < N$, let $C', r >0$ be positive constants, and let $K \geq 2$. We assume that for any interval $I \subseteq [N]$ of length $|I| \leq M$ we have for any $h=\sum_{n \in I} a_n \phi_n$, the estimate
$$ ||h||_{\Gamma_K} \leq \frac{C' \log^{r}(N)} {\log^{r}(N/|I|)} ||h||_{2}$$
holds. In addition, we assume that
$$||\mathcal{M}f||_{2} \ll \log\log(N) ||f||_{2}  $$
holds for any $f$ in the span of the system.  Then, if $f=\sum_{n \in \mathcal{I}} a_n \phi_n$ for $|\mathcal{I}| =  M \leq N$, for any $\beta >0$ we have
\begin{eqnarray}\label{eq:v2sub}
\nonumber
  ||\mathcal{V}^2f ||_{2} & \ll_{p}& \big( \log^{\beta/2}(N) +(C')^{p/2}\log^{rp/2-\beta (p-2)/4}(N) \log^{(1-pr)/2}(N/M)  \\
   & &  + \; C'  \log^{r+1} (N) K^{(2-p)/2}  +\log \log (N)   \big) ||f||_{2}.
\end{eqnarray}
\end{lemma}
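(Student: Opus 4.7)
My plan is to reduce $\mathcal{V}^2 f$ to a sum of sub-interval maximal functions indexed by the dyadic mass decomposition of $f$, split each such function via Lemma~\ref{lem:maxde} into an $L^p$ component $G_{k,s}$ and an $L^2$ error $E_{k,s}$, and combine the contributions using a partition-size threshold $\log^\beta(N)$ that balances a Rademacher--Menshov-type bound via hypothesis 2 against an $L^p$ concentration bound on the $G_{k,s}$.

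Performing the mass decomposition of Section~3 on $f$ produces the dyadic intervals $\{I_{k,s}\}$ (with $M(I_{k,s})\le 2^{-k}$) and separator points $\{i_{k,s}\}$. For any partition $\pi\in\mathcal{P}_N$ and $J\in\pi$, Lemma~\ref{lem:decomposition} yields a dyadic approximation $\tilde{J}=\tilde{J}_\ell\cup\{i_J\}\cup\tilde{J}_r\in\mathcal{A}$ with $J\subseteq\tilde{J}$ and $M(\tilde{J})\le 2M(J)$. Writing $f_J$ in terms of its restrictions to $\tilde{J}_\ell\cap J$, $\{i_J\}$, and $\tilde{J}_r\cap J$, dominating the outer pieces by $\tilde{f}_I(x):=\sup_{I'\subseteq I}|f_{I'}(x)|$, and using that at most two partition pieces share the same $\tilde{J}$, gives
\[
\mathcal{V}^2 f(x)^2\ll \sum_{(k,s)}\tilde{f}_{I_{k,s}}(x)^2+\sum_{(k,s)} a_{i_{k,s}}^2\phi_{i_{k,s}}(x)^2,
\]
whose second (point) sum is pointwise $O(1)$. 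For each $(k,s)$, the $\Gamma_K$-hypothesis applied to the sub-ONS on $I_{k,s}$ reads $\|h\|_{\Gamma_K}\le \Delta_{k,s}\|h\|_2$ with $\Delta_{k,s}:=C'\log^r(N)/\log^r(N/|I_{k,s}|)$, and Lemma~\ref{lem:maxde} splits $\tilde{f}_{I_{k,s}}=G_{k,s}+E_{k,s}$ with $\|G_{k,s}\|_p\ll \Delta_{k,s}\|f_{I_{k,s}}\|_2$ and $\|E_{k,s}\|_2\ll \Delta_{k,s}\log(N)K^{(2-p)/2}\|f_{I_{k,s}}\|_2$.

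The $E$-part is controlled directly in $L^2$: using $|I_{k,s}|\le M$, $\sum_s\|f_{I_{k,s}}\|_2^2\le 1$ at each level, and the $O(\log N)$ depth of the mass decomposition, one obtains $\|(\sum_{(k,s)} E_{k,s}^2)^{1/2}\|_2\ll C'\log^{r+1}(N)K^{(2-p)/2}\|f\|_2$, which is term~(c). For the remaining contributions (including the $G$-part), split $\mathcal{V}^2 f$ into small-partition ($|\pi|\le \log^\beta(N)$) and large-partition regimes. On small partitions, $|f_J|\le 2\mathcal{M}f$ gives $\sum_J|f_J|^2\le 4\log^\beta(N)\mathcal{M}f(x)^2$, so hypothesis~2 produces a contribution $\ll \log^{\beta/2}(N)\log\log(N)\|f\|_2$, which I would split additively into terms~(a) and~(d). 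On large partitions, apply the threshold $\lambda=\log^{\beta/2}(N)$: the pointwise inequality $G_{k,s}^2 \mathbb{I}_{\{|G_{k,s}|>\lambda\}}\le \lambda^{2-p}|G_{k,s}|^p$ reduces the $L^2$ estimate to
\[
\Bigl\|\Bigl(\sum_{(k,s)} G_{k,s}^2\Bigr)^{1/2}\Bigr\|_2^2 \ll \lambda^{2-p}\sum_{(k,s)}\|G_{k,s}\|_p^p.
\]
Evaluating $\sum_{(k,s)}\|G_{k,s}\|_p^p$ via the geometric series $\sum_k 2^{k(1-p/2)}=O_p(1)$ (valid for $p>2$), the bound $\Delta_{k,s}\le C'\log^r(N)/\log^r(N/M)$ coming from $|I_{k,s}|\le M$, and a level-counting factor of $\log(N/M)$ yields $\sum\|G_{k,s}\|_p^p\ll (C')^p\log^{rp}(N)\log^{1-pr}(N/M)$. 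Substituting $\lambda^{2-p}=\log^{-\beta(p-2)/2}(N)$ and taking the square root produces term~(b).

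The principal obstacle is the truncated portion of the $G$-sum, the set where $|G_{k,s}|\le\lambda$: the naive pointwise bound $|G_{k,s}|\le\lambda$ together with the $O(M)$ dyadic pairs would over-count by a factor of $M$. The resolution is to absorb this contribution into the small-partition regime, using the observation that when most $|G_{k,s}(x)|\le\lambda$, the partition $\pi$ maximizing $\sum|f_J(x)|^2$ at $x$ is effectively captured by fewer than $\log^\beta(N)$ active pieces, so the small-partition bound via $\mathcal{M}f$ already covers this case. Carrying this reduction through while tracking the exponents $rp/2-\beta(p-2)/4$ in $\log(N)$ and $(1-pr)/2$ in $\log(N/M)$ against the geometric-series and level-counting contributions is where the bookkeeping is most delicate.
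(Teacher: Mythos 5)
Your broad outline (mass decomposition, Lemma~\ref{lem:maxde}, a threshold at scale $\log^{\beta/2}(N)$) matches the paper, but your thresholding mechanism is different from the paper's and, as written, does not close.

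The paper does not threshold by partition cardinality. It defines, for each dyadic mass-interval $I$, the pointwise bad set $B_I=\{x:|\tilde G_I(x)|^2\ge\log^\beta(N)\,M(I)\}$ and its complement $H_I$. On $H_I$ the pointwise bound $|\tilde G_I|^2\le\log^\beta(N)\,M(I)$, together with $\sum_{I\in\pi^*}M(I)\ll1$ for any admissible covering $\pi^*$, gives term~(a) cleanly without any appeal to the maximal operator. On $B_I$, Chebyshev's inequality for $\mu(B_I)$ plus H\"older give term~(b). The $\log\log(N)$ in term~(d) comes from a separate split of the dyadic intervals into $I^a_k=\{I_{k,s}:|I_{k,s}|\le2^{-k/2}M\}$ and $I^b_k$ (its complement), with the $I^b$-contribution handled crudely via the maximal-function hypothesis because $|I^b_k|\le2^{k/2}$. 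This length split is also what makes the bad-set sum work: for $I_{k,s}\in I^a$ one has $\log(N/|I_{k,s}|)\ge\log(N/M)+k/2$, so the sum $\sum_k(\log(N/M)+k)^{-pr}$ converges to $\log^{1-pr}(N/M)$, which is exactly the factor in term~(b).

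Your small/large-partition split has two concrete gaps. First, in the small-partition regime you obtain $\log^{\beta/2}(N)\,\log\log(N)\,\|f\|_2$, a \emph{product}, and you propose to ``split it additively into terms~(a) and~(d).'' That is not a valid step: $\log^{\beta/2}(N)\log\log(N)$ strictly dominates $\log^{\beta/2}(N)+\log\log(N)$, so your bound is genuinely weaker than the lemma and the additive split cannot be made. The paper avoids this by producing $\log^{\beta/2}(N)$ from the pointwise $H_I$ bound (no maximal function involved) and producing $\log\log(N)$ from the separate $I^b$/points contribution. Second, your proposed ``absorption'' of the truncated $G$-sum into the small-partition regime rests on the claim that if most $|G_{k,s}(x)|\le\lambda$ then the optimal partition at $x$ has $\lesssim\log^\beta(N)$ pieces. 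There is no such implication: the maximizing partition can have arbitrarily many pieces each contributing below threshold, and the number of pieces in $\pi(x)$ is simply not controlled by pointwise smallness of the $G_{k,s}$. This is exactly the obstruction that the paper's pointwise $B_I/H_I$ decomposition is designed to sidestep. Without the $I^a/I^b$ split your evaluation of $\sum\|G_{k,s}\|_p^p$ also does not naturally produce the factor $\log^{1-pr}(N/M)$; the ``level-counting factor of $\log(N/M)$'' you insert has no source in your argument, whereas in the paper that factor arises from the integral $\int_{\log(N/M)}^\infty x^{-pr}\,dx$ over the $I^a$ levels.
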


\begin{proof}Let $f = \sum_{n \in \mathcal{I} } a_n \phi_n$ with $|\mathcal{I}| = M$ and  $\mathcal{I} \subseteq [N]$.  Normalize $||f||_{2} =1$, and again split $f=f_{L}+f_{S}$ where $f_{L}= \sum_{ |a_n| > N^{-1}} a_n \phi_n  $ and $f_{S}= \sum_{ |a_n| \leq N^{-1}} a_n \phi_n $. We then have $\mathcal{V}^2 f_{S} \leq \sum |a_n \phi_n| \ll 1$ and may restrict our attention to $f_L$.

We now perform a mass decomposition on $f_{L}$, using the notation above. Let $I$ be a dyadic subinterval. We consider the restriction of $f_L$ to this interval, and we apply Lemma \ref{lem:maxde} to decompose the maximal version of this restriction as a sum of functions $\tilde{G}_{I} + \tilde{E}_{I}$. We may apply Lemma \ref{lem:maxde} to the system $\{\phi_i \}_{i \in I}$ with $\Delta := \frac{C'\log^r(N)}{\log^r(N/|I|)}$. We then obtain
\[ ||\tilde{G}_{I}||_p \ll_p \frac{C'\log^r(N)}{\log^r(N/|I|)} ||f_{I}||_2,\]
\[ ||\tilde{E}_{I} ||_2 \ll_p \frac{C'\log^{r+1}(N)}{\log^r(N/|I|)} K^{(2-p)/2} ||f_{I}||_2.\]

For a fixed $x \in \mathbb{T}$, we know that the value of $\mathcal{V}^2f_{L} (x)$ is achieved by some maximizing partition $\pi$.
Appealing to Lemma \ref{lem:decomposition}, we know that each interval in $\pi$ can be covered by dyadic intervals and points (and each dyadic interval/point will only be used to cover at most a constant number of the original intervals). Let $\mathcal{A}$ denote the family of dyadic intervals and points. We write $\pi^* \subset \mathcal{A}$ to denote a suitable covering of a partition by dyadic intervals and points.
For each $k$, we define $I^a_k := \{I_{k,s} \text{ s.t. } |I_{k,s}| \leq 2^{-k/2}M\}$ and $I_k^b := \{I_{k,s} \text{ s.t. } |I_{k,s}| > 2^{-k/2}M\}$. We let $I^a$ denote the union of $I^a_k$ over the values of $k$ let $I^b$ denote the union of $I^b_k$.
We then have
\[ |\mathcal{V}^2f_{L} (x)|^2 \ll   \sup_{\pi } \sum_{I \in \pi} | f_{I}(x)  |^2 \ll \sup_{\pi^* \subset \mathcal{A} }\left( \sum_{I \in \pi^*} | \tilde{f}_{I}  |^2 + \sum_{i_{k,s} \in \pi^*} |f_{i_{k,s}}|^2  \right)\]
\[ \ll \sup_{\pi^* \subset \mathcal{A} } \sum_{\substack{I \in \pi^* \\ I \in I^{a} }} | \tilde{f}_{I}  |^2 + \sup_{\pi^* \subset \mathcal{A}} \sum_{\substack{I \in \pi^* \\ I \in I^{b} }} | \tilde{f}_{I}  |^2 + \sup_{\pi^* \subset \mathcal{A}} \sum_{i_{k,s} \in \pi^*} |f_{i_{k,s}}|^2. \]

The third quantity here can be easily handled. Since $f_{i_{k,s}} = a_{i_{k,s}} \phi_{i_{k,s}}$ and $\sum_i a_i^2 = 1$, the contribution from the points here is $\ll 1$.
The second sum may also be estimated in a crude manner. Recall that $|| \tilde{f}_{I_{k,s}} ||_{2}^2 \ll (\log \log (N))^2 2^{-k} $ (by assumption) and note that $|I_{k}^{b}| \leq 2^{k/2}$. Thus
\[\int \sup_{\pi^* \subset \mathcal{A} } \sum_{\substack{I \in \pi^* \\ I \in I^{b} }} | \tilde{f}_{I}  |^2 \ll  \sum_{k} \sum_{\substack{I_{k,s} \in I_{k}^{b}}  } (\log \log (N))^2  2^{-k} \ll  (\log \log(N))^2 \sum_{k} 2^{-k/2} \ll (\log \log (N))^2. \]

We return to the first sum.
We fix $\beta>0$.
For each dyadic interval $I$, we define the `bad' set  $B_{I} = \{x \in \mathbb{T} : |\tilde{G}_{I}(x) |^2 \geq \log^{\beta}(N) M(I)\}$ and we let $H_I$ denote its complement inside $\mathbb{T}$. Then,
\[\int \sup_{\pi^* \subset \mathcal{A} } \sum_{\substack{I \in \pi^* \\ I \in I^{a} }} | \tilde{f}_{I}  |^2 \ll \int \sup_{\pi^*\subset \mathcal{A} } \sum_{\substack{I \in \pi^* \\ I \in I^{a} }} | \tilde{G}_{I} + \tilde{E}_{I}  |^2\]
 \[\ll  \int   \sup_{\pi^* \subset \mathcal{A}} \sum_{\substack{I \in \pi^* \\ I \in I^{a} }} | \mathbb{I}_{H_{I}} \tilde{G}_{I}  |^2   + \int \sup_{\pi^* \subset \mathcal{A}} \sum_{\substack{I \in \pi^* \\ I \in I^{a} }}   | \mathbb{I}_{B_{I}}  \tilde{G}_{I} |^2  +  \int \sup_{\pi^* \subset \mathcal{A}}   \sum_{\substack{I \in \pi^* \\ I \in I^{a} }}   | \tilde{E}_{I} |^2 . \]

By Definition, $|\mathbb{I}_{H_{I}}  \tilde{G}_{I}(x) |^2 \leq \log^{\beta}(N) M(I)  $ for all $x$, and since the intervals in the partition $\pi$ are disjoint, we have that $\sum_{I \in \pi^*} M(I) \ll 1$. Thus
\[ \int \sup_{\pi^* \subset \mathcal{A} }  \sum_{\substack{I \in \pi^* \\ I \in I^{a} }}  | \mathbb{I}_{H_{I}}  \tilde{G}_{I}(x) |^2 \leq \log^{\beta}(N).\]
Next, using that $\int | E_{k,s}|^2   \ll_{p}  (C')^2 \log^{2(r+1)}(N) \log^{-2r}(N/|I_{k,s}|) K^{2-p} M(I_{k,s})$, we have
\[\int \sup_{\pi^* \subset \mathcal{A}}   \sum_{\substack{I \in \pi^* \\ I \in I^{a} }}   | \tilde{E}_{I} |^2    \ll_{p} (C')^2  \log^{2r+2} (N) K^{2-p}.  \]

It remains to estimate the contribution of the bad sets. By Chebyshev's inequality, we have
\[ \int \mathbb{I}_{B_{k,s}} \leq \frac{\int | \tilde{G}_{k,s} |^p  }{\log^{\beta p/2}(N) (M(I_{k,s}))^{p/2}  } \ll_{p} (C')^{p}\log^{pr}(N) \log^{-pr}(N/|I_{k,s}|) \log^{-\beta p/2}(N).\]
Now, by Holder's inequality, we have
\[ \int | \mathbb{I}_{B_{k,s}}  \tilde{G}_{k,s} |^2 \leq \left( \int \mathbb{I}_{B_{k,s}}   \right)^{1/(p/2)'}
\left( \int |\tilde{G}_{k,s} |^p  \right)^{2/p} \]
where $(p/2)'$ denotes the conjugate exponent of $p/2$. This is
\[\ll_{p } \left( (C')^{p}\log^{pr}(N) \log^{-pr}(N/|I_{k,s}|) \log^{-\beta p/2}(N)  \right)^{1/(p/2)'}   (C')^{2} \log^{2r}(N)\log^{-2r}(N/|I_{k,s}|)  M(I_{k,s})    \]
\[= (C')^{p} \log^{pr}(N)\log^{-pr}(N/|I_{k,s}|) \log^{-\beta (p-2)/2}(N)  M(I_{k,s}),  \]
using that $1/(p/2)' = (p-2)/p$. Since $|I_{k,s}| \leq 2^{-k/2}M$ for $I_{k,s} \in I^{a}$, we have that (assuming $pr >1$)
\[ \int \sum_{k,s, I_{k,s} \in I^a } | \mathbb{I}_{B_{k,s}} \tilde{G}_{k,s}  |^2  \ll_{p} (C')^{p}\log^{pr}(N) \log^{-\beta (p-2)/2}(N) \left( \sum_{k=1}^{\log(N) } (\log(N/M) + k)^{-pr} \right) \]
\[\ll_{p}   (C')^{p}\log^{pr}(N) \log^{-\beta (p-2)/2}(N) \int_{\log(N/M)}^{\infty} x^{-pr} dx \ll_{p}   (C')^{p}\log^{pr}(N) \log^{-\beta (p-2)/2}(N)  \log^{1-pr}(N/M). \]

\end{proof}

\begin{corollary}\label{cor:v2subinterval}
With the same hypothesis as Lemma \ref{lem:v2subinterval}, if $C'  \log^{r+1} (N) K^{(2-p)/2}  \ll 1 $,  $\log(N/M) \ll \log^{\theta}(N)$, and $r + \theta/p - \theta r >0$, then
$$||\mathcal{V}^2 f ||_{2} \ll_{p, C'}  \log^{r + \theta/p -\theta r} (N)||f||_{2}$$
for all $f = \sum_{n \in \mathcal{I}} a_n \phi_n$ with $|\mathcal{I}| = M$.
\end{corollary}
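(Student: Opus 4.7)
The plan is to apply Lemma \ref{lem:v2subinterval} with a carefully optimized choice of the free parameter $\beta$. I would set $\beta := 2(r + \theta/p - \theta r)$, which is strictly positive by the hypothesis $r + \theta/p - \theta r > 0$, and then bound each of the four summands in the estimate \eqref{eq:v2sub} separately.

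The first summand, $\log^{\beta/2}(N)$, equals $\log^{r + \theta/p - \theta r}(N)$ by construction. For the second summand, I would first use $\log(N/M) \ll \log^\theta(N)$ to bound $\log^{(1-pr)/2}(N/M) \ll \log^{\theta(1-pr)/2}(N)$, producing a total exponent of $\log(N)$ equal to $rp/2 - \beta(p-2)/4 + \theta(1-pr)/2$. The choice of $\beta$ is precisely the one that balances this exponent against $\beta/2$; a direct algebraic simplification (multiply through by $4$ and collect terms in $\beta$ and $\theta$) verifies that this exponent also equals $r + \theta/p - \theta r$, so the second summand is $\ll_{p, C'} \log^{r + \theta/p - \theta r}(N)$. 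The third summand is $\ll 1$ directly from the hypothesis $C' \log^{r+1}(N) K^{(2-p)/2} \ll 1$, and the fourth term $\log\log(N)$ is $\ll \log^{r + \theta/p - \theta r}(N)$ for $N$ large, using the strict positivity of the exponent.

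Summing the four contributions yields $||\mathcal{V}^2 f||_2 \ll_{p, C'} \log^{r + \theta/p - \theta r}(N) ||f||_2$, as desired. The argument is essentially a deterministic optimization of the parameters supplied by Lemma \ref{lem:v2subinterval}: there is no genuine obstacle beyond the algebraic balancing of the first two terms, which uniquely determines the value of $\beta$ via the equation $\beta/2 = rp/2 - \beta(p-2)/4 + \theta(1-pr)/2$. The hypothesis $r + \theta/p - \theta r > 0$ is precisely what is needed both to make the chosen $\beta$ admissible (i.e.\ positive) and to ensure that the residual $\log\log(N)$ and $O(1)$ contributions are absorbed into the main term.
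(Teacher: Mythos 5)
Your choice $\beta = 2(r + \theta/p - \theta r)$ and the overall balancing argument match the paper's proof exactly (the paper solves the same linear equation $\beta/2 = rp/2 - \beta(p-2)/4 + \theta(1-pr)/2$). There is, however, a subtle direction issue in your handling of the second summand, which is worth flagging even though it appears to trace to a typo in the paper's own hypothesis. The estimate in Lemma \ref{lem:v2subinterval} is only valid under $pr > 1$ (see the parenthetical ``(assuming $pr>1$)'' in its proof; the eventual application takes $r = 3/4$, $p = 3 - \epsilon'$), so the exponent $(1-pr)/2$ is \emph{negative}. Raising both sides of $\log(N/M) \ll \log^\theta(N)$ to a negative power \emph{reverses} the inequality, so the step ``$\log^{(1-pr)/2}(N/M) \ll \log^{\theta(1-pr)/2}(N)$'' does not follow from the stated hypothesis; it instead requires the lower bound $\log(N/M) \gg \log^\theta(N)$. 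This is not fatal to the paper: in the sole place the corollary is used (inside Lemma \ref{lem:intlenapprox}), $M$ is chosen so that $\log(N/M) \sim \log^\theta(N)$, so both directions hold. But as written, the step in your argument (and the hypothesis in the corollary as stated) should invoke the lower bound $\log(N/M) \gg \log^\theta(N)$ rather than the upper bound, since $pr > 1$ makes the exponent on $\log(N/M)$ negative.
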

\begin{proof}
By hypothesis, the third term in (\ref{eq:v2sub}) is $\ll 1$, so we only need to choose a $\beta>0$ to balance the first two terms. Solving
$\beta/2 = rp/2  - \beta(p-2)/4 +\theta(1-pr)/2$ we see that $\beta = 2(r + \theta/p - \theta r)$. Taking this choice of $\beta$ in (\ref{eq:v2sub}) yields the corollary.
\end{proof}

At the beginning of this section we introduced a `mass' dyadic decomposition of $[N]$ with respect to a function $f=\sum_{n \in [N]} a_n \phi_n$. Now we recall the more common `length' dyadic decomposition. Without loss of generality, we assume $N = 2^{\ell}$ for some positive integer $\ell$ (if $N$ is not a power of $2$ we will simply round up to the nearest power of $2$). Consider the collection of dyadic subintervals of the form $\mathcal{I}_{k,s}= (s2^k, (s+1) 2^k]$ for each $0\leq k \leq \ell$, $0 \leq s \leq 2^{\ell - k} -1$. Note that the we have used a slightly different indexing convention here, compared with the mass decomposition.

\begin{lemma}\label{lem:lengthcover} Let $J \subseteq [N]$ be an arbitrary subinterval. Then we may decompose $J= J_{l} \cup J_{r}$ as a union of disjoint intervals $J_l, J_r$ where (i) at least one of $J_{l}$ and $J_{r}$ is non-empty, and (ii)  $J_{l} \subseteq \mathcal{I}_{k,s}$ for some $k,s$ where $|J_{l}| \geq \frac{1}{2}|\mathcal{I}_{k,s}|$  (assuming $J_{l}$ is non-empty), and the same holds for $J_r$.
\end{lemma}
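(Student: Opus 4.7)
My plan is to split $J = (a,b]$ at its unique \emph{dyadic midpoint}: the element $m \in J$ of maximal $2$-adic valuation $\nu := \nu_2(m)$, setting $J_l := (a,m]$ and $J_r := (m,b]$. Uniqueness of $m$ follows because if two elements $m_1 = t_1 \cdot 2^\nu$, $m_2 = t_2 \cdot 2^\nu \in J$ (with $t_1, t_2$ odd) both attained the maximum, their midpoint $\tfrac{t_1+t_2}{2} \cdot 2^\nu$ would lie in $J$ (as $J$ is an interval and $t_1+t_2$ is even) and have $2$-adic valuation at least $\nu+1$, a contradiction. Condition (i) is then immediate from $J \neq \emptyset$.

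The key geometric input is the bound $|J_l|, |J_r| \leq 2^\nu$. Writing $m = t \cdot 2^\nu$ with $t$ odd, the integers $m \pm 2^\nu = (t \pm 1) \cdot 2^\nu$ both have $2$-adic valuation at least $\nu + 1$, so by the same maximality argument neither lies in $J$. Since $J$ is an interval containing $m$, this forces $a \geq m - 2^\nu$ and $b < m + 2^\nu$.

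Given this, for non-empty $J_l$ I would take $k$ to be the least non-negative integer with $2^k \geq |J_l|$. The bound above yields $k \leq \nu$, so $2^k \mid m$, and hence $s := m/2^k - 1 \geq 0$ is an integer for which $\mathcal{I}_{k,s} = (m - 2^k, m]$ is a legitimate dyadic interval containing $J_l$. By minimality of $k$, $|J_l| \geq 2^{k-1} = \tfrac{1}{2}|\mathcal{I}_{k,s}|$ (with the edge case $k=0$ handled by $|J_l| = 1 = |\mathcal{I}_{0,s}|$). The argument for $J_r$ is symmetric, employing $\mathcal{I}_{k',s'} = (m, m+2^{k'}]$ with $s' = m/2^{k'}$; here one should briefly verify $m + 2^{k'} \leq N$, which follows from $m < N$ together with the fact that $m$ and $N = 2^\ell$ are both multiples of $2^\nu$ (so $N - m \geq 2^\nu \geq 2^{k'}$). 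I expect no real obstacle beyond correctly identifying $m$: the whole proof hinges on the observation that maximizing the $2$-adic valuation is precisely what forces each half of $J$ to fit snugly inside a single dyadic interval, with the two halves bounded by the nearest higher-valuation integers on either side.
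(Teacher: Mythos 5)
Your proof is correct and follows essentially the same strategy as the paper's: both identify the splitting point of $J$ as the element with maximal $2$-adic valuation (the paper's ``$b$ of the form $m2^k$ with $k$ maximal'' is exactly your dyadic midpoint $m$), and both derive the length constraint from the fact that the next multiple of a higher power of $2$ cannot lie in $J$. You spell out a few details the paper leaves implicit — uniqueness of the midpoint, the explicit construction of the covering dyadic interval $(m - 2^k, m]$, and the check that $\mathcal{I}_{k',s'}$ stays inside $[N]$ — but the core argument is the same.
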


\begin{proof}
We define $b \in J$ to be of the form $m2^k$ where $k$ is maximal. In other words, we consider all multiples of powers of 2 inside $J$, and we set $b$ to be one of the multiples of the highest power appearing. We set $J_{l}:= \{1, \ldots, b\} \cap J$ and $J_{r}:= \{b+1, \ldots, N\} \cap J$. We consider $|J_l|$ and we let $k_l$ be the minimal integer such that $2^{k_l} \geq |J_l|$. Then we must have $k_l \leq k$ (otherwise, a multiple of $2^{k+1}$ would appear in $J$, contradicting maximality of $k$). Thus, there is a dyadic interval of length $2^{k_l}$ that covers $J_l$ and has length $\leq 2|J_l|$. An analogous argument applies to $J_r$.
\end{proof}

As above, if $f(x)=\sum_{n \in [N]} a_n \phi_n(x)$ is fixed and $I \subseteq [N]$ is an interval we will write $f_{I} = \sum_{n \in I} a_n \phi_n$ and $\tilde{f}_{I} = \max_{J \subseteq{I}} |\sum_{n \in J} a_n \phi_n(x)|$ (where the maximum is over all subintervals of $I$).

\begin{lemma}\label{lem:intlenapprox}Let $\Phi = \{\phi_i \}_{i=1}^{N}$ denote a bounded orthonormal system such that
$$||\mathcal{M}f||_{2} \ll \log \log (N) ||f||_{2}$$
holds for all $f$ in the span of $\Phi$. Suppose we have $0 < \theta < 1$, $r>0$, and $p >2$ such that for each subinterval $\mathcal{I} \subseteq [N]$ of length $M =2^m$ where $M$ is the largest power of $2$ that is $\leq N 2^{- \log^{\theta}(N)}$,
$$ || \mathcal{V}^2 f ||_{2}  \ll \log^{r+ \theta/p - \theta r}(N) || f||_{2} $$
holds for any $f = \sum_{n \in I } a_n \phi_n$. Then, for any $f = \sum_{n \in [N]} a_n \phi_n$,  we have that
\[||\mathcal{V}^2 f ||_{2} \ll (\log^{\theta/2}(N) \log\log(N) + \log^{r+ \theta/p - \theta r}(N)) ||f||_{2}. \]
In particular, when $\theta= \frac{r}{1/2 + r -1/p}$, we obtain
\[||\mathcal{V}^2 f ||_{2} \ll \log^{\frac{r}{1 + 2r -2/p}}(N) \log\log(N) ||f||_{2}.\]
\end{lemma}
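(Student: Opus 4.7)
The plan is a two-scale coarse/fine decomposition at length $M=2^m$. Partition $[N]$ into the $N/M$ dyadic blocks $\mathcal{J}_1,\ldots,\mathcal{J}_{N/M}$ of length $M$; the hypothesis on $M$ gives $\log(N/M)\ll\log^{\theta}(N)$. The $f_{\mathcal{J}_j}$'s form a coarse orthogonal system of $N/M$ terms (orthogonal because the $\mathcal{J}_j$'s are disjoint), and I write $F$ for $f$ viewed as its expansion and $\mathcal{V}^2_{\mathrm{coarse}}$, $\mathcal{M}_{\mathrm{coarse}}$ for the corresponding operators. The aim is the pointwise reduction
\[
(\mathcal{V}^2 f(x))^2 \ll \sum_j (\mathcal{V}^2 f_{\mathcal{J}_j}(x))^2 + (\mathcal{V}^2_{\mathrm{coarse}} F(x))^2.
\]

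To obtain this, fix a partition $\pi$ of $[N]$ into intervals and, for each $I\in\pi$, let $j_{\min}(I),j_{\max}(I)$ be the smallest and largest $j$ with $I\cap\mathcal{J}_j\neq\emptyset$. Split $f_I$ into its left-boundary piece on $I\cap\mathcal{J}_{j_{\min}(I)}$, its middle $f_I^{\mathrm{mid}}:=\sum_{j_{\min}(I)<j<j_{\max}(I)} f_{\mathcal{J}_j}$ (possibly empty), and its right-boundary piece on $I\cap\mathcal{J}_{j_{\max}(I)}$; then $|f_I|^2\ll |\text{left}|^2+|f_I^{\mathrm{mid}}|^2+|\text{right}|^2$ by $(a+b+c)^2\leq 3(a^2+b^2+c^2)$. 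The combinatorial point is twofold. First, because the $I\in\pi$ are pairwise disjoint, for each fixed $j$ at most one $I$ has $j_{\min}(I)=j$ and at most one has $j_{\max}(I)=j$; hence the fully-internal intervals of $\pi$ contained in $\mathcal{J}_j$, together with the at-most-two boundary fragments in $\mathcal{J}_j$, form a sub-partition of $\mathcal{J}_j$ into intervals, so summing their squared values yields at most $(\mathcal{V}^2 f_{\mathcal{J}_j}(x))^2$. Second, the coarse intervals $[j_{\min}(I)+1,j_{\max}(I)-1]$ arising from distinct $I\in\pi$ are pairwise disjoint subintervals of $[N/M]$ (consecutive middles are separated by at least one coarse index), so they form a sub-partition of $[N/M]$ and $\sum_I|f_I^{\mathrm{mid}}|^2\leq (\mathcal{V}^2_{\mathrm{coarse}} F(x))^2$. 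Taking the supremum over $\pi$ produces the displayed pointwise bound.

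For the $L^2$ estimate, the fine contribution is handled by applying the hypothesis to each $\mathcal{J}_j$ (which has length exactly $M$) to obtain $\|\mathcal{V}^2 f_{\mathcal{J}_j}\|_2^2\ll\log^{2(r+\theta/p-\theta r)}(N)\|f_{\mathcal{J}_j}\|_2^2$, then summing in $j$ using orthogonality of the $f_{\mathcal{J}_j}$'s to conclude $\sum_j\|\mathcal{V}^2 f_{\mathcal{J}_j}\|_2^2\ll\log^{2(r+\theta/p-\theta r)}(N)\|f\|_2^2$. For the coarse contribution, $\mathcal{M}_{\mathrm{coarse}} F\leq\mathcal{M}f$ pointwise (a coarse prefix is a fine prefix), giving $\|\mathcal{M}_{\mathrm{coarse}} F\|_2\ll\log\log(N)\|f\|_2$, and applying Theorem 3 of \cite{LewkoO} (the $\mathcal{M}\to\mathcal{V}^2$ transfer with a $\sqrt{\log(\#\mathrm{terms})}$ loss) to the coarse system yields $\|\mathcal{V}^2_{\mathrm{coarse}} F\|_2\ll\sqrt{\log(N/M)}\log\log(N)\|f\|_2\ll\log^{\theta/2}(N)\log\log(N)\|f\|_2$. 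Adding the two contributions gives the first displayed conclusion; the specialization follows by choosing $\theta=r/(1/2+r-1/p)$ so that $r+\theta/p-\theta r=\theta/2=r/(1+2r-2/p)$ and both terms contribute at the same rate. The main nontrivial step is the combinatorial bookkeeping that produces valid sub-partitions at both the fine and coarse scales; the rest is a routine combination of the hypothesis with the known $\mathcal{M}\to\mathcal{V}^2$ transfer.
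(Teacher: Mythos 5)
Your proof is correct and ultimately rests on the same two ingredients as the paper's --- a length-dyadic covering argument at scales above $M$ (controlled via the hypothesis $\|\mathcal{M}f\|_2 \ll \log\log(N)\|f\|_2$, which yields $\|\tilde{f}_J\|_2 \ll \log\log(N)\|f_J\|_2$ for every interval $J$), and the $\mathcal{V}^2$ hypothesis applied on each length-$M$ block. But the packaging is genuinely different. The paper applies Lemma~\ref{lem:lengthcover} directly to $[N]$ to restrict to a class $\mathcal{P}_N^*$ of near-dyadic partitions, then splits the sum over a maximizing partition into $B_1$ (intervals of length $\geq N2^{-2\log^\theta N}$) and $B_2$ (shorter intervals); the large intervals are controlled by summing $\|\tilde{f}_{\mathcal{I}_{k,s}}\|_2^2$ over the $\ll \log^\theta(N)$ top levels of the length-dyadic decomposition, and the small intervals are pushed into single blocks. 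Your proof instead performs a clean pointwise coarse/fine split at the exact block scale $M$, $(\mathcal{V}^2f)^2 \ll \sum_j(\mathcal{V}^2f_{\mathcal{J}_j})^2 + (\mathcal{V}^2_{\mathrm{coarse}}F)^2$, with careful handling of boundary fragments so that both pieces decouple cleanly; the coarse part is then discharged by a $\mathcal{M}\to\mathcal{V}^2$ transfer at the $(N/M)$-element scale. This is somewhat tidier and avoids the slight mismatch between the length cutoff $N2^{-2\log^\theta N}$ and the block length $M$ that the paper has to check.

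One caveat worth flagging: you invoke ``Theorem 3 of \cite{LewkoO}'' as a black-box $\mathcal{M}\to\mathcal{V}^2$ transfer with a $\sqrt{\log(\#\mathrm{terms})}$ loss. That theorem is stated for the trigonometric system (as a consequence of Carleson--Hunt), and your coarse system $\{f_{\mathcal{J}_j}\}_{j=1}^{N/M}$ is neither uniformly bounded nor normalized, so you should not lean on it verbatim. The transfer you need, however, does hold in the required generality and is short to reproduce: by Lemma~\ref{lem:lengthcover} applied to $[N/M]$, a coarse partition is covered (with multiplicity $\ll 1$) by coarse length-dyadic intervals across $\log(N/M)$ levels, and for each coarse interval $J$ of blocks one has $\tilde{F}_J \leq 2\,\mathcal{M}g$ for $g:=\sum_{j\in J}f_{\mathcal{J}_j}$, whence $\int|\tilde{F}_J|^2 \ll (\log\log N)^2\sum_{j\in J}\|f_{\mathcal{J}_j}\|_2^2$; summing over each level costs $\|f\|_2^2$ and over levels costs $\log(N/M)\ll\log^\theta(N)$. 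This is precisely the paper's $B_1$ computation viewed at the coarse scale. If you spell that out (or confirm the cited theorem applies to arbitrary orthogonal systems), the argument is complete.
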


\begin{proof}
Setting $f(x)= \sum_{n \in [N]} a_n \phi_n(x)$, we may normalize $||f||_{2}^{2} =\sum_{n\in[N]} a_n^2 = 1$. We now wish to estimate the quantity
\begin{equation}\label{eq:intvar}
 \int \max_{\pi \in \mathcal{P}_{N}} \sum_{I \in \pi } \left| \sum_{n\in I} a_n\phi_n(x) \right|^2
\end{equation}
where $\mathcal{P}_{N}$ ranges over the set of all partitions of $[N]$. From the elementary inequality $(a+b)^2 \leq 3 (a^2 +b^2)$ we have that  $|\sum_{n \in J} a_n \phi_n(x)|^2 \ll |\sum_{n \in J_{l}} a_n \phi_n(x)|^2 + |\sum_{n \in J_{r}} a_n \phi_n(x)|^2 $ whenever $J = J_l \cup J_r$ for disjoint intervals $J_l, J_r$. Using Lemma \ref{lem:lengthcover}, it follows that we may restrict the set of partitions $\mathcal{P}_{N}$ in (\ref{eq:intvar}) to the subclass $\mathcal{P}_{N}^{*}$ of permutations where each interval in each partition is contained in a dyadic interval of at most twice its length. That is,
\[(\ref{eq:intvar}) \ll  \int \max_{\pi \in \mathcal{P}_{N}^{*}} \sum_{I \in \pi } \left| \sum_{n\in I} a_n\phi_n(x) \right|^2.\]

For a fixed $x$, let $\pi(x)$ denote the partition in $\mathcal{P}_{N}^{*}$ achieving the maximum in the above expression at $x$. We then have that the above quantity can be expressed as
\[\int \sum_{\substack{I \in \pi(x) \\ |I| \geq N 2^{- 2 \log^{\theta}(N)} }} \left| \sum_{n\in I} a_n\phi_n(x) \right|^2 + \int \sum_{\substack{I \in \pi(x) \\ |I| < N 2^{- 2 \log^{\theta}(N)} }} \left| \sum_{n\in I} a_n\phi_n(x) \right|^2 := B_1 + B_2\]

We now consider the contribution from the quantity $B_1$. Let $I \in \pi(x)$, since $\pi(x) \in \mathcal{P}_{N}^{*}$ there is a dyadic interval $J_{I}$, such that $|I| \leq |J_{I}| \leq 2 |I|$. Since the intervals in $\pi(x)$ are disjoint, and the associated dyadic interval $J_{I}$ has length at most $2|I|$, it follows that any particular dyadic interval is associated to at most $2$ intervals in $\pi(x)$. We recall $\ell =  \log (N)$. It follows that
\begin{equation}\label{eq:boundI}
B_1 \ll \sum_{0 \leq k\leq 2 \ell^{\theta}} \; \sum_{0 \leq s < 2^{\ell-k}} | \tilde{f}_{\mathcal{I}_{k,s}}(x)|^2 \ll \log ^{\theta}(N) (\log \log (N))^2,
\end{equation}
where we have used that $\int |\tilde{f}_{I}|^2 \ll \int |\mathcal{M} f_{I}|^2 \ll (\log \log (N))^2 \sum_{n \in I} a_n^2 $.

Next, we must estimate the quantity $B_2$. Let $M = 2^m$ denote the largest power of $2$ less than or equal to $N 2^{-  \log^{\theta}(N)}$. Thus  $\frac{1}{2}N 2^{-  \log^{\theta}(N)} < M \leq N 2^{-  \log^{\theta}(N)}$. Consider the partition of $N$ into (dyadic) subintervals of length $M$,  $\{\mathcal{I}_{m,s}  \}_{s=0}^{2^{\ell-m} -1}$.  It now follows from the hypothesis that on every interval in this partition $\mathcal{I}_{m,s}$,  we have $||\mathcal{V}^2f||_{2} \ll \log^{r+\theta/p - \theta r}(N)||f||_{2}$ for any $f = \sum_{n \in I_{m,s}} a_n \phi_n$.

Next we note that each of the intervals in the partitions in $B_2$ is contained in a dyadic interval of length at most $2 \times N 2^{-2\log^{\theta}(N)} \leq M$. By the nesting of dyadic intervals, it follows that each interval in the sum defining $B_2$ is strictly contained in an element of the partition $\{\mathcal{I}_{m,s}\}$. Thus one has
\begin{equation}\label{eq:boundII}
B_2 \ll \sum_{s} |\mathcal{V}^{2} f_{\mathcal{I}_{m,s}}|^2  \ll \log^{2(r+ \theta/p - \theta r) }(N) ||f||_2^2.
\end{equation}
Combining \ref{eq:boundI} and \ref{eq:boundII} (and taking square roots) we have that
\[||\mathcal{V}^2 f ||_{2} \ll (\log^{\theta/2}(N) \log \log (N) + \log^{r + \theta/p - \theta r}(N)) ||f||_{2}.\]

\end{proof}

\section{Probabilistic Estimates}

\subsection{A First Estimate}
We begin by establishing a close variant of Lemma 3.4 in \cite{Bour}, mainly following the proof in \cite{Bour} having taking more care with logarithmic factors.

\begin{lemma}\label{lem:Bbasic} Let $\phi_1, \ldots, \phi_N$ be orthonormal functions on a probability space $\mathbb{T}, \mu$ satisfying $|\phi_i| \leq A$ everywhere on $\mathbb{T}$ for all $i$. Let $0 < \delta < 1$ and let $\{\xi_i\}_{i=1}^N$ be independent, $\{0,1\}$-valued random variables of mean $\delta$ on a probability space $\Omega$. Let $1 < K < \infty$, $2 < p <3$, $1 \leq m \leq N$ and $q_0 \geq 1$.
Then there exists a
\[\lambda \ll_p \delta^{\frac{1}{4}} K^{\frac{p-2}{2}} + \left(\frac{q_0}{\log (\frac{1}{\delta})}\right)^{\frac{1}{4}} + \left(\frac{\log N}{\log ( \frac{1}{\delta})}\right)^{\frac{1}{2p}}\]
such that
\[ \left|\left| \sup_{\substack{|S| \leq m\\ |c_i|^2 < 2/m}} \int_\mathbb{T} \Gamma_K\left(\frac{\sum_{i \in S} c_i \xi_i(\omega) \phi_i}{\lambda}\right) \right|\right|_{L^{q_0} (d\omega)} \leq 1.\]
\end{lemma}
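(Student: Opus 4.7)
The result is essentially a refined variant of Bourgain's Lemma 3.4 in \cite{Bour}, and I plan to follow his duality/chaining argument while carefully tracking logarithmic factors. Writing $F_{S,c,\omega}:=\sum_{i\in S}c_i\xi_i(\omega)\phi_i$, the first step is to note that it suffices to produce $\lambda$ of the claimed size with
\[ \Bigl\|\sup_{|S|\leq m,\,|c_i|^2<2/m}\|F_{S,c,\omega}\|_{\Gamma_K}\Bigr\|_{L^{q_0}(d\omega)}\ll \lambda, \]
since $\Gamma_K$ is convex with $\Gamma_K(0)=0$ (so $\Gamma_K(\alpha t)\leq\alpha\Gamma_K(t)$ for $\alpha\in[0,1]$), giving $\int\Gamma_K(F/\lambda)\leq\|F\|_{\Gamma_K}/\lambda$ once $\|F\|_{\Gamma_K}\leq\lambda$. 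Dualizing via Lemma \ref{lem:dual} recasts the Orlicz norm as
\[ \|F\|_{\Gamma_K}\asymp \sup_{\|g\|_{\Gamma_K^*}\leq 1}\sum_{i\in S}c_i\xi_i(\omega)\hat g_i,\qquad \hat g_i:=\int g\phi_i. \]

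Using the piecewise formulas (\ref{eq:dualLow})--(\ref{eq:dualHigh}), I decompose each $g$ with $\|g\|_{\Gamma_K^*}\leq 1$ as $g=g_1+g_2$ with $\|g_1\|_{L^{p'}}\ll 1$ and $\|g_2\|_{L^2}\ll K^{(p-2)/2}$ (where $p'=p/(p-1)$), and handle the two pieces separately. Cauchy--Schwarz in $c$ (with $\|c\|_{\ell^2}\leq\sqrt 2$ following from $|c_i|^2<2/m$ and $|S|\leq m$) gives in each case
\[ \sup_{\|g_j\|_\ast\leq C_j,\,|S|\leq m,\,c}\sum_{i\in S}c_i\xi_i\hat g_{j,i}\ll \Bigl(\sup_{\|g_j\|_\ast\leq C_j,\,|S|\leq m}\sum_{i\in S}\xi_i\,\hat g_{j,i}^{2}\Bigr)^{1/2}, \]
where $(\|\cdot\|_\ast,C_1)=(\|\cdot\|_{p'},1)$ for $g_1$ and $(\|\cdot\|_\ast,C_2)=(\|\cdot\|_2,K^{(p-2)/2})$ for $g_2$.

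I then apply Lemma \ref{lem:chaining} separately to the two resulting sets $\mathcal E_j:=\{(\hat g_{j,i}^{2})_{i=1}^N:\|g_j\|_\ast\leq C_j\}\subset\mathbb R^N$, whose $\ell^2$-diameters and entropies I estimate by dualizing Lemma \ref{lem:entropy}: since the adjoint of $c\mapsto\sum c_i\phi_i$ is $g\mapsto(\hat g_i)$, an $L^p$-cover of $\mathcal P_m$ induces an $\ell^2$-cover on the Fourier-coefficient side, and the diameter is controlled by $|\hat g_{1,i}|\leq\|g_1\|_{p'}\|\phi_i\|_p\ll A^{(p-2)/p}$ together with Bessel. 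The Gaussian-tail piece $[\delta m+q_0/\log(1/\delta)]^{1/2}$ from Lemma \ref{lem:chaining} becomes, via the outer Cauchy--Schwarz square root (using $\|X^{1/2}\|_{L^q}=\|X\|_{L^{q/2}}^{1/2}$), the $\delta^{1/4}K^{(p-2)/2}$ and $(q_0/\log(1/\delta))^{1/4}$ terms of the claimed $\lambda$, while the entropy integral---after splitting at $t=1/2$ into the two regimes of Lemma \ref{lem:entropy}---produces the $(\log N/\log(1/\delta))^{1/(2p)}$ term.

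\textbf{Main obstacle.} The principal technical difficulty is the careful matching of exponents and logarithmic losses. Specifically, the two entropy regimes of Lemma \ref{lem:entropy} (for $t>1/2$ and $0<t\leq 1/2$) must be integrated and balanced to produce exactly the $1/(2p)$ exponent on $\log N/\log(1/\delta)$, and the Cauchy--Schwarz square root must be correctly threaded through the $L^{q}$-norms of the chaining output in order for the $1/2$-exponents in Lemma \ref{lem:chaining} to become the $1/4$-exponents appearing in $\lambda$. A secondary issue is the use of the bounded ONS hypothesis $|\phi_i|\leq A$ to control the operator norms needed to dualize Lemma \ref{lem:entropy} and to compare $\ell^2$-covers of $\mathcal E_j$ with $L^p$-covers of $\mathcal P_m$.
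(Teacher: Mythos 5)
Your plan diverges substantially from the paper's proof and has two gaps that I do not see how to close.

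\textbf{Loss of the crucial $\frac{1}{\sqrt m}$ factor.} You first dualize (via Lemma~\ref{lem:dual}) and then apply Cauchy--Schwarz \emph{over all of $c$}, obtaining $\sum_{i\in S}c_i\xi_i\hat g_i\leq\|c\|_{\ell^2}\bigl(\sum_{i\in S}\xi_i\hat g_i^2\bigr)^{1/2}$ with only the constant $\|c\|_{\ell^2}\leq\sqrt 2$. The paper instead never dualizes: it uses $\Gamma_K(t)\leq Ct^2\gamma_K(t)$ to write $\int\Gamma_K(F/\lambda)\ll\lambda^{-2}\sum_{i\in W}c_i\xi_i\langle\phi_i,F\gamma_K(\lambda^{-1}F)\rangle$ and then extracts the \emph{single} factor $|c_i|\leq\sqrt{2/m}$, yielding the prefactor $\lambda^{-2}m^{-1/2}$. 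That $m^{-1/2}$ is exactly what cancels the $\sqrt m$ coming from both the $(\delta m)^{1/2}$ term in Lemma~\ref{lem:chaining} and the $\sqrt m$ in the entropy estimate of Lemma~\ref{lem:entropy}. In your version, the chaining term $(\delta m)^{1/2}B_j$ survives unmitigated and, after the outer square root, contributes $(\delta m)^{1/4}K^{(p-2)/2}$ rather than $\delta^{1/4}K^{(p-2)/2}$; since $m$ can be as large as $N$, this is not an acceptable loss and does not match the stated $\lambda$.

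\textbf{The entropy of your dual sets is not controlled by Lemma~\ref{lem:entropy}.} The sets $\mathcal E_j=\{(\hat g_{j,i}^2)_i:\|g_j\|_*\leq C_j\}$ are parameterized by $g_j$ ranging over an entire $L^{p'}$ or $L^2$ unit ball with no support restriction, whereas Lemma~\ref{lem:entropy} bounds the entropy of $\mathcal P_m$, whose defining feature is the restriction $|S|\leq m$ that produces the factor $m\log(N/m+1)$. Your suggestion that ``an $L^p$-cover of $\mathcal P_m$ induces an $\ell^2$-cover on the Fourier-coefficient side'' is not a correct dualization: a cover of $\mathcal P_m$ in $L^p$ gives information about $m$-supported linear combinations, not about the $N$-vectors of Fourier coefficients of arbitrary dual elements (and certainly not their coordinatewise squares). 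There is also no analogue of Lemma~\ref{lem:gammabasic} in your argument to produce the change-of-variable from the entropy of $\mathcal E_j$ in $\ell^2$ back to an entropy of a controlled family; the paper's step from $N_2(\mathcal E,t)$ to $N_{2/(3-p)}(\mathcal P_m,z)$ relies precisely on the pointwise estimate $|s\gamma_K(s)-t\gamma_K(t)|\ll(\gamma_K(s)+\gamma_K(t))|s-t|$, which plays no role in your proposal.

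In short, the paper stays entirely on the primal side, uses the two-sided comparison $t^2\gamma_K(t)\leq\Gamma_K(t)\ll t^2\gamma_K(t)$ to linearize, and applies a single chaining argument to the set $\mathcal E=\{(|\langle\phi_i,g\gamma_K(\lambda^{-1}g)\rangle|)_i:g\in\mathcal P_m\}$, whose entropy is reduced to that of $\mathcal P_m$ via Lemma~\ref{lem:gammabasic}. Your dualize-then-split approach has the right flavor (and the decomposition $\Gamma_K^*$-ball $\subset$ $L^{p'}$-ball $+$ $L^2$-ball is correct), but it forfeits the $m^{-1/2}$ gain and lands you with entropy estimates for sets that Lemma~\ref{lem:entropy} does not reach.
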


\begin{proof}
For a fixed $\lambda>0$ (to be set later) and any fixed $\omega \in \Omega$ and $W$ of size $\leq m$ and coefficients $c_i$ satisfying $|c_i|^2 < 2/m$, we observe that
\begin{equation}\label{Bstep1}
    \int_\mathbb{T} \Gamma_K \left(\frac{\sum_{i \in W} c_i \xi_i(\omega) \phi_i}{\lambda}\right) \ll \lambda^{-2}\cdot \frac{1}{\sqrt{m}} \sup_{g \in \mathcal{P}_m} \sum_{i \in W} \xi_i(\omega) \left|\langle\phi_i, g \gamma_K(\lambda^{-1} g)\rangle\right|,
\end{equation}
where \[\mathcal{P}_m := \left\{ g = \sum_{i \in S} a_i \phi_i\; \big| \;\sum_i |a_i|^2 \leq 1 \text{ and } |S| \leq m\right\}.\]

We define $\mathcal{E} \subseteq \mathbb{R}^N$ as
\[ \mathcal{E} := \left\{ \left(\left|\langle \phi_i, g \gamma_K(\lambda^{-1} g) \rangle \right|\right)_{i=1}^N \; \big| \; g \in \mathcal{P}_m\right\}.\]
We also define $B := \sum_{x \in \mathcal{E}} ||x||_{\ell^2}$.

Then, applying Lemma \ref{lem:chaining}, we have
\begin{eqnarray}\label{Bstep2}\nonumber
\left|\left| \sup_{g \in \mathcal{P}_m, |W| \leq m} \sum_{i \in W} \xi_i (\omega) \cdot | \langle \phi_i, g \gamma_K(\lambda^{-1} g)\rangle | \right|\right|_{L^{q_0} (d\omega)} \\
\ll \left( \delta m + \frac{q_0}{\log(\frac{1}{\delta})}\right)^{\frac{1}{2}} B + \left(\log \left(\frac{1}{\delta}\right)\right)^{-\frac{1}{2}} \int_0^B \left( \log N_2(\mathcal{E},t)\right)^{\frac{1}{2}} dt,
\end{eqnarray}
where $N_2(\mathcal{E},t)$ denotes the entropy number of $\mathcal{E}$ with respect to the $\ell^2$-distance.

We now derive an upper bound on $B$ for our set $\mathcal{E}$. We note that for any $g \in \mathcal{P}_m$, we can apply Bessel's inequality to obtain
\[\left( \sum_i |\langle \phi_i, g \gamma_K(\lambda^{-1} g) \rangle |^2\right)^{\frac{1}{2}} \leq \left( \int_\mathbb{T} |g|^2 |\gamma_K(\lambda^{-1}g)|^2\right)^{\frac{1}{2}}.\]
Using the fact that $|\phi_i |\leq A$, we have that for all $x \in \mathbb{T}$, $|g(x)| \ll \sum_i |a_i| \leq \sqrt{m}$ by the Cauchy-Schwarz inequality.
Thus, since $||g||_{L^2} \leq 1$, we have $B \ll \gamma_K\left(\frac{\sqrt{m}}{\lambda}\right)$.

Next we address the quantity $N_2(\mathcal{E},t)$. For arbitrary $g, h \in \mathcal{P}_m$, we consider the quantity
\[\left( \sum_i \left( |\langle \phi_i, g \gamma_K(\lambda^{-1}g)\rangle| -|\langle \phi_i, h \gamma_K(\lambda^{-1} h)\rangle |\right)^2 \right)^{\frac{1}{2}} \leq \left( \sum_i |\langle \phi_i, g\gamma_K(\lambda^{-1} g) - h \gamma_K(\lambda^{-1} h)\rangle |^2\right)^{\frac{1}{2}},\]
using the fact that for any real numbers $a$ and $b$, $\left| |a| - |b|\right| \leq |a-b|$.
By Bessel's inequality, this quantity is $\leq || g \gamma_K(\lambda^{-1} g) - h \gamma_K(\lambda^{-1} h)||_{L^2}$.

Applying Lemma \ref{lem:gammabasic}, we see this is
\[ \ll \left|\left| |g-h| \cdot |\gamma_K(\lambda^{-1}g) + \gamma_K(\lambda^{-1} h)|\right|\right|_{L^2} \leq \lambda^{-(p-2)} \left|\left| |g-h| \cdot (|g|^{p-2} + |h|^{p-2})\right|\right|_{L^2}.\]
Noting that $|g|^{p-2} + |h|^{p-2} \leq 2 (|g|+|h|)^{p-2}$, this is $\ll \lambda^{-(p-2)} \left|\left| |g-h| \cdot (|g|+|h|)^{p-2}\right|\right|_{L^2}$. Now applying H\"{o}lder's inequality with conjugate exponents $\frac{1}{3 -p}$ and $\frac{1}{p-2}$, we see this quantity is
\[ \ll \lambda^{-(p-2)} \left( \int_\mathbb{T} |g-h|^{\frac{2}{3-p}}\right)^{\frac{3-p}{2}} \cdot \left( \int_\mathbb{T} (|g| + |h|)^2\right)^{\frac{p-2}{2}} \ll \lambda^{-(p-2)} \left|\left| g-h \right|\right|_{\frac{2}{3-p}}.\]

By a change of variable, we then have
\begin{equation}\label{Bstep3}
\int_0^\infty \left( \log N_2(\mathcal{E},t)\right)^{\frac{1}{2}} dt \ll \lambda^{-(p-2)} \int_0^\infty \left(\log N_{\frac{2}{3-p}}(\mathcal{P}_m,z)\right)^{\frac{1}{2}} dz.
\end{equation}
Here, $N_{\frac{2}{3-p}}(\mathcal{P}_m,z)$ denote the corresponding entropy numbers of $\mathcal{P}_m$ considered as a subset of the space $L^{\frac{2}{3-p}}(\mathbb{T},\mu)$.

Applying Lemma \ref{lem:entropy}, we obtain
\[
\int_0^\infty \left(\log N_{\frac{2}{3-p}}(\mathcal{P}_m, t)\right)^{\frac{1}{2}} dt \leq C_p \sqrt{m} \left(\log\left(\frac{N}{m} +1\right)\right)^{\frac{1}{2}} \left( \int_0^{\frac{1}{2}} \sqrt{\log\left(\frac{1}{t}\right)} + \int_{\frac{1}{2}}^\infty t^{-\frac{\nu_p}{2}} dt\right),
\]
where $C_p$ and $\nu_p$ denote values that depend only on $p$. We note that $\nu_p > 2$ for $2 < p <3$.
Thus, we deduce
\begin{equation}\label{Bstep4}
\int_0^\infty \left(\log N_{\frac{2}{3-p}}(\mathcal{P}_m, t)\right)^{\frac{1}{2}} dt \leq C_p \sqrt{m}\left(\log\left(\frac{N}{m}\right)+1\right)^{\frac{1}{2}}
\end{equation}
for some constant $C_p$ depending only on $p$ (we have abused notation a bit here, as this is not the same $C_p$ as in the previous statement).

Combining (\ref{Bstep3}) and (\ref{Bstep4}), we see that
\begin{equation}\label{Bstep5}
\int_0^\infty \left(\log N_2(\mathcal{E},t)\right)^{\frac{1}{2}} dt \leq C_p \cdot \lambda^{-(p-2)} \cdot \sqrt{m} \left(\log\left(\frac{N}{m}+1\right)\right)^{\frac{1}{2}}.
\end{equation}
We can now use this to obtain the following bound on the righthand side of (\ref{Bstep2}):
\[ \ll \left(\delta m + \frac{q_0}{\log\left(\frac{1}{\delta}\right)} \right)^{\frac{1}{2}} \gamma_K\left(\frac{\sqrt{m}}{\lambda}\right) + C_p \cdot \lambda^{-(p-2)}\cdot \sqrt{m} \left( \log\left(\frac{1}{\delta}\right)\right)^{-\frac{1}{2}} \left(\log\left( \frac{N}{m} +1\right)\right)^{\frac{1}{2}}.\]

Combining this with (\ref{Bstep1}), we conclude that
\[\left|\left| \sup_{\substack{|W| \leq m\\ |c_i| < 2/m}} \int_\mathbb{T} \Gamma_K\left(\frac{\sum_{i \in W} c_i \xi_i(\omega) \phi_i}{\lambda}\right) \right|\right|_{L^{q_0} (d\omega)}\]\[ \ll \lambda^{-2} \left(\delta + \frac{q_0}{m \log\left(\frac{1}{\delta}\right)}\right)^{\frac{1}{2}} \gamma_K\left(\frac{\sqrt{m}}{\lambda}\right) + C_p \cdot  \lambda^{-p} \left( \frac{\log\left(\frac{N}{m}+1\right)}{\log\left(\frac{1}{\delta}\right)}\right)^{\frac{1}{2}}.\]
This quantity will be $\leq 1$ for a choice of $\lambda$ that is
\[ \lambda \ll_p \delta^{\frac{1}{4}} K^{\frac{p-2}{2}} + \left(\frac{q_0}{\log\left(\frac{1}{\delta}\right)}\right)^{\frac{1}{4}} + \left( \frac{\log N}{\log\left(\frac{1}{\delta}\right)}\right)^{\frac{1}{2p}}.\]
\end{proof}

\subsection{Moving to General Coefficients}

\begin{lemma}\label{lem:Bgenco}
Let $\phi_1, \ldots, \phi_N$ be orthonormal functions on a probability space $\mathbb{T}, \mu$ satisfying $|\phi_i| \leq A$ everywhere on $\mathbb{T}$ for all $i$. Let $0 < \delta < 1$ and let $\{\xi_i\}_{i=1}^N$ be independent, $\{0,1\}$-valued random variables of mean $\delta$ on a probability space $\Omega$. Let $1 < K < \infty$, $2 < p <3$, $1 \leq M \leq N$ and $q_0 \geq 1$.
Then there exists a
\[\lambda \ll_p \delta^{\frac{1}{4}} K^{\frac{p-2}{2}} \sqrt{\log M} + \left(\frac{q_0+ \log \log M}{\log (\frac{1}{\delta})}\right)^{\frac{1}{4}}\sqrt{\log M} + \left(\frac{\log N}{\log ( \frac{1}{\delta})}\right)^{\frac{1}{2p}}\sqrt{\log M}\]
such that
\[ \left|\left| \sup_{\substack{||\{a_i\}||_{\ell^2} \leq 1 \\ |support(\{a_i\})| \leq M}} \int_\mathbb{T} \Gamma_K\left(\frac{\sum_{i} a_i \xi_i(\omega) \phi_i}{\lambda}\right) \right|\right|_{L^{q_0} (d\omega)} \leq 1.\]
\end{lemma}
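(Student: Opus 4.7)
The plan is to dyadically decompose the coefficient sequence $\{a_i\}$ by magnitude and reduce each scale to Lemma \ref{lem:Bbasic}, then combine the scales through convexity of $\Gamma_K$ and a Cauchy--Schwarz step that exploits the $\ell^2$-normalization. Fix $\{a_i\}$ with $\|\{a_i\}\|_{\ell^2}\leq 1$ and $|\operatorname{supp}(a)|\leq M$. Truncating coefficients with $|a_i|<1/M^{2}$ (the discarded part has $L^\infty$, hence $\Gamma_K$, norm at most $A/M$, which can be absorbed into the final constant) leaves $J=O(\log M)$ dyadic levels $L_j:=\{i:2^{-j-1}<|a_i|\leq 2^{-j}\}$, with $m_j:=|L_j|\leq M$. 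Write $f_j:=\sum_{i\in L_j}a_i\xi_i\phi_i=\eta_j\tilde f_j$, where $\eta_j^2:=\sum_{i\in L_j}|a_i|^2$ and $\tilde f_j$ has coefficients $c_i=a_i/\eta_j$ satisfying $\sum c_i^2=1$ and $|c_i|^2\leq 4/m_j$ (using $\eta_j^2\geq m_j\cdot 2^{-2j-2}$). After absorbing a $\sqrt2$ factor, Lemma \ref{lem:Bbasic} applies to each $\tilde f_j$. Crucially, $\sum_j\eta_j^2\leq 1$, so Cauchy--Schwarz gives $\sum_j\eta_j\leq\sqrt{J}\ll\sqrt{\log M}$.

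Now apply Lemma \ref{lem:Bbasic} at each level $j$ with $m=m_j$ and Lebesgue exponent $q=q_0+\log J$; this produces a common bound
\[ \lambda^{*}\ll_{p}\delta^{1/4}K^{(p-2)/2}+\left(\frac{q_{0}+\log\log M}{\log(1/\delta)}\right)^{1/4}+\left(\frac{\log N}{\log(1/\delta)}\right)^{1/(2p)} \]
such that $\|U_j\|_{L^{q_0+\log J}(d\omega)}\leq 1$, where $U_j(\omega):=\sup_{L_j,(c_i)}\int_{\mathbb T}\Gamma_K(\tilde f_j/\lambda^{*})$ is the sup over all admissible level-$j$ configurations. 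For a fixed $\{a_i\}$, put $\lambda':=(\sum_{j'}\eta_{j'})\lambda^{*}$ and use convexity of $\Gamma_K$ applied to the representation $f/\lambda'=\sum_j(\eta_j/\sum_{j'}\eta_{j'})\cdot\tilde f_j/\lambda^{*}$ (a convex combination) to obtain the pointwise inequality
\[ \int_{\mathbb T}\Gamma_K(f/\lambda')\leq\sum_j\frac{\eta_j}{\sum_{j'}\eta_{j'}}\int_{\mathbb T}\Gamma_K(\tilde f_j/\lambda^{*})\leq\max_j U_j(\omega). \]
Since $\sum_{j'}\eta_{j'}\leq\sqrt J\ll\sqrt{\log M}$ and $\Gamma_K$ is increasing on $\mathbb R^+$, enlarging the denominator upgrades this to $\int_{\mathbb T}\Gamma_K(f/(C\sqrt{\log M}\,\lambda^{*}))\leq\max_j U_j(\omega)$ for every $\{a_i\}$, so $\sup_{\{a_i\}}\int_{\mathbb T}\Gamma_K(f/(C\sqrt{\log M}\,\lambda^{*}))\leq\max_j U_j(\omega)$. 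Taking $L^{q_0}(d\omega)$ and invoking Lemma \ref{lem:french} gives $\|\max_j U_j\|_{L^{q_0}}\ll\sup_j\|U_j\|_{L^{q_0+\log J}}\leq 1$, which delivers the claim after a further constant rescaling of $\lambda$.

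The main obstacle is extracting the $\sqrt{\log M}$ factor rather than the $\log M$ factor that a naive triangle inequality $\|f\|_{\Gamma_K}\leq\sum_j\|f_j\|_{\Gamma_K}\leq\sum_j\eta_j\lambda^{*}$ combined with $\sum_j\eta_j\leq J\max\eta_j$ would yield. The improvement is forced by routing the sum $\sum_j\eta_j$ through Cauchy--Schwarz against $\sum_j\eta_j^2\leq 1$, which only materializes cleanly when one works directly with $\int\Gamma_K(f/\lambda)$ via the $(\eta_j/\sum_{j'}\eta_{j'})$-weighted convex combination, so that the factor $\sum_{j'}\eta_{j'}$ enters multiplicatively in the denominator of $\Gamma_K$ where $\sqrt{\sum\eta_j^2}\leq 1$ can be extracted. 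A secondary bookkeeping point, which is the source of the $q_0+\log\log M$ in the statement, is that the $\max$ over the $J$ levels costs an additive $\log J=O(\log\log M)$ in the Lebesgue exponent via Lemma \ref{lem:french}; this is absorbed without loss into the $q_0$-argument of Lemma \ref{lem:Bbasic} because that bound depends on the exponent only through $q^{1/4}$.
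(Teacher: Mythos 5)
Your proposal is correct and follows essentially the same route as the paper: decompose the coefficient sequence into $O(\log M)$ dyadic magnitude levels, normalize each level so that Lemma~\ref{lem:Bbasic} applies, exploit convexity of $\Gamma_K$ with weights $\eta_j/\sum_{j'}\eta_{j'}$ so that the factor $\sum_j\eta_j \leq \sqrt{\log M}$ (Cauchy--Schwarz against $\sum_j\eta_j^2\leq 1$) enters multiplicatively inside the argument of $\Gamma_K$, and pay only an additive $\log\log M$ in the Lebesgue exponent for the supremum over levels via Lemma~\ref{lem:french}. Two small bookkeeping differences from the paper, neither of which changes the substance: (i) you truncate away coefficients with $|a_i|<M^{-2}$ to cap the number of levels, whereas the paper absorbs all small coefficients into the coarsest dyadic level $m=2^{\lceil\log M\rceil}$, using the support bound $|S_m|\leq M\leq m$ so no truncation is needed; (ii) your indexing by the magnitude scale $j$ leaves $m_j:=|L_j|$ ranging freely inside $U_j$, so strictly speaking $U_j$ still contains a supremum over $m_j$ that must be rounded to powers of two and handled by one more application of Lemma~\ref{lem:french} (costing another $O(\log\log M)$ in the exponent, which is harmless); the paper sidesteps this by indexing directly by the power-of-two parameter $m$, so that each $V_{D,m}$ is exactly the quantity bounded by Lemma~\ref{lem:Bbasic}. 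Your explicit verification that the normalized coefficients satisfy $|c_i|^2\leq 4/m_j$ (via $\eta_j^2\geq m_j 2^{-2j-2}$) is a point the paper's writeup glosses over, so that detail is a welcome clarification.
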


\begin{proof}
For any fixed values $\{a_i\}$ with support size $\leq M$ such that $\sum_i a_i^2 \leq 1$, we divide them into $\log M$ levels corresponding to powers of 2. More precisely, for each $m$ that is a power of 2 that is $\geq 1$ and $<M$, we let $S_m$ denote the set of indices $i$ such that $1/m \leq |a_i|^2 < 2/m$. For $m = 2^{\lceil \log M \rceil}$, we define $S_m$ to be the set of all indices $i$ such that $|a_i|^2 < 2/m$. We then have that $[N]$ is the disjoint union of $S_m$ as $\log m$ ranges from 1 to $\lceil \log M \rceil$. Note that $|S_m| \leq m$ for each $m$. We also define $d_m = \sum_{i \in S_m} |a_i|^2$ for each $m$. We note that $\sum_m d_m \leq 1$.
For notational convenience, we also define the quantity $D := \sum_m \sqrt{d_m}$.

We then observe (for a fixed $\omega \in \Omega$)
\begin{eqnarray*}
\int_\mathbb{T} \Gamma_K\left(\frac{\sum_{i} a_i \xi_i(\omega) \phi_i}{\lambda}\right) & = & \int_\mathbb{T} \Gamma_K \left( \lambda^{-1} \sum_m \sum_{i \in S_m} a_i \xi_i(\omega) \phi_i \right) \\
&= &\int_\mathbb{T} \Gamma_K \left( \sum_m \frac{\sqrt{d_m}}{D} \left( \frac{D}{\lambda \sqrt{d_m}}\sum_{i \in S_m} a_i \xi_i(\omega) \phi_i \right) \right).
\end{eqnarray*}
Appealing to the convexity of $\Gamma_K$ and the linearity of the integral, we see this is
\[ \leq D^{-1} \sum_m \sqrt{d_m} \int_\mathbb{T} \Gamma_K \left( \frac{D}{\lambda \sqrt{d_m}} \sum_{i \in S_m} a_i \xi_i(\omega) \phi_i \right).\]

For each $m$, we define the random variable $V_{D,m}$ as:
\[V_{D,m}(\omega) = \sup_{\substack{|A|\leq m,\\ |c_i|^2 \leq 2/m}} \int_\mathbb{T} \Gamma_K \left( \frac{D}{\lambda} \sum_{i \in A} c_i \xi_i(\omega) \phi_i\right).\]
We now have
\[ \left|\left| \sup_{\substack{||\{a_i\}||_{\ell^2} \leq 1\\ |support(\{a_i\})| \leq M}} \int_\mathbb{T} \Gamma_K\left(\frac{\sum_{i} a_i \xi_i(\omega) \phi_i}{\lambda}\right) \right|\right|_{L^{q_0} (d\omega)}\]\[ \leq \left| \left| \sup_{||d_m||_{\ell^2 \leq 1}} D^{-1} \sum_{m} \sqrt{d_m} V_{D,m} \right|\right|_{L^{q_o}(d\omega)}.\]

We note that $D = \sum_m \sqrt{d_m} \ll \sqrt{\log M}$ by the Cauchy-Schwarz inequality (recall there are only $\lceil \log M \rceil$ values of $m$). Therefore, since $V_{D,m}$ is non-decreasing as a function of $D$, the quantity above is
\[ \ll \left|\left| \sup_{||d_m||_{\ell^2 \leq 1}} D^{-1} \left(\sum_m \sqrt{d_m}\right) \sup_m V_{D,m} \right|\right|_{L^{q_0}(d\omega)} \leq \left|\left| \sup_m V_{\sqrt{\log M},m} \right|\right|_{L^{q_o}(d\omega)}.\]
Applying Lemma \ref{lem:french}, we see that this is
\[ \ll \sup_{m} \left|\left| \sup_{\substack{|A|\leq m,\\ |c_i|^2 \leq 2/m}} \int_\mathbb{T} \Gamma_K \left( \frac{\sqrt{\log M}}{\lambda} \sum_{i \in A} c_i \xi_i(\omega) \phi_i\right) \right|\right|_{L^{q_o+\log \log M}(d\omega)} . \]

By setting
\[\lambda \ll_p \delta^{\frac{1}{4}} K^{\frac{p-2}{2}} \sqrt{\log M} + \left(\frac{q_0+ \log \log M}{\log (\frac{1}{\delta})}\right)^{\frac{1}{4}}\sqrt{\log M} + \left(\frac{\log N}{\log ( \frac{1}{\delta})}\right)^{\frac{1}{2p}}\sqrt{\log M}\]
and applying Lemma \ref{lem:Bbasic}, we obtain the result.
\end{proof}

\subsection{Obtaining a Good Partition}

\begin{lemma}\label{lem:Lpart} Let $\phi_1, \ldots, \phi_N$ be orthonormal functions on a probability space $\mathbb{T}, \mu$ satisfying $|\phi_i| \leq A$ everywhere on $\mathbb{T}$ for all $i$. Let $1 \leq M \leq L \leq N$ and $\gamma > 1$. It holds with probability at least $1 - c N^{-\gamma}$ (for some universal constant $c$) that
a subset $I$ of $N$ of size $L$ chosen uniformly randomly satisfies
\begin{equation}\label{intervalReq}
 \sup_{\substack{||\{a_i\}||_{\ell^2} \leq 1 \\ \text{support}(\{a_i\}) \leq M }} \int_\mathbb{T} \Gamma_K \left(\frac{\sum_{i \in I} a_i \phi_{i}}{\lambda}\right) \leq 2
\end{equation}
for a choice of $\lambda$ that is
\[ \lambda \ll_{p,\gamma} \left(\frac{L}{N}\right)^{\frac{1}{4}} K^{\frac{p-2}{2}} \sqrt{\log M} + \left(\frac{\log N}{\log (\frac{N}{L})}\right)^{\frac{1}{4}}\sqrt{\log M} + \left(\frac{\log N}{\log ( \frac{N}{L})}\right)^{\frac{1}{2p}}\sqrt{\log M}.\]
\end{lemma}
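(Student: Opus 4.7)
The plan is to combine Lemma \ref{lem:Bgenco} (which gives an $L^{q_0}$ bound in the selector model), Markov's inequality (to upgrade this to a tail bound), and Pittel's inequality, Lemma \ref{lem:SelToSet} (to transfer from Bernoulli selectors to a uniformly random subset of a prescribed size). First I would set $\delta := L/N$ and let $\{\xi_i\}_{i=1}^N$ denote independent $\{0,1\}$-valued random variables of mean $\delta$, so that $\log(1/\delta) = \log(N/L)$. Applying Lemma \ref{lem:Bgenco} with this $\delta$ and with a parameter $q_0$ to be chosen, I get a value
\[
 \lambda_0 \ll_p \left(\tfrac{L}{N}\right)^{\!1/4} K^{(p-2)/2}\sqrt{\log M} + \left(\tfrac{q_0 + \log\log M}{\log(N/L)}\right)^{\!1/4}\!\sqrt{\log M} + \left(\tfrac{\log N}{\log(N/L)}\right)^{\!1/(2p)}\!\sqrt{\log M}
\]
for which the random variable
\[
 X(\omega) := \sup_{\substack{\|\{a_i\}\|_{\ell^2}\leq 1\\ |\text{supp}(\{a_i\})|\leq M}} \int_{\mathbb T} \Gamma_K\!\left(\frac{\sum_i a_i \xi_i(\omega) \phi_i}{\lambda_0}\right)
\]
satisfies $\|X\|_{L^{q_0}(d\omega)} \leq 1$.

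Next, by Markov's inequality, $\mathbb{P}[X > 2] \leq 2^{-q_0}$. I would choose $q_0 = C\gamma \log N$ for a sufficiently large absolute constant $C$, so that $2^{-q_0} \leq c'N^{-(\gamma + 1/2)}$ for any prescribed small $c'$. Since the event $\{X > 2\}$ depends only on the random subset $S = \{i : \xi_i = 1\}$ (not on anything beyond which indices are selected), it qualifies as an event described as a set of subsets of $[N]$. Applying Pittel's inequality, the probability that a uniformly random subset $I \in \binom{[N]}{L}$ violates (\ref{intervalReq}) is at most $O(\sqrt{L}) \cdot 2^{-q_0} \leq cN^{-\gamma}$, which is the desired bound. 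With this choice of $q_0$, the term $\log\log M$ in the middle piece of $\lambda_0$ is absorbed into $q_0 \ll_\gamma \log N$, giving the middle contribution $(\log N/\log(N/L))^{1/4}\sqrt{\log M}$ claimed in the statement (where the implicit constant depends on $p$ and $\gamma$).

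I do not anticipate a serious obstacle: the content of the proof is entirely contained in the lemmas above, so the task reduces to bookkeeping. The only subtlety is verifying that the event inside the supremum is measurable as a function of the subset $S$ alone (which it is, since the supremum is over a deterministic family of coefficient sequences and the integrand depends on $S$ only through which $\phi_i$ appear), and choosing $q_0$ just large enough that the Markov estimate, after paying the $\sqrt{L}$ loss from Pittel's inequality, still beats $N^{-\gamma}$. Since $L \leq N$, taking $q_0 = C\gamma \log N$ for any $C \geq 2/\log 2$ (with an appropriate constant in front) suffices.
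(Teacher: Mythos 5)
Your proposal is correct and follows essentially the same route as the paper: fix $\delta=L/N$, apply Lemma~\ref{lem:Bgenco} with $q_0\sim\gamma\log N$, use Chebyshev/Markov on the $q_0$-th moment to get a tail bound for the selector model, and transfer to a uniformly random subset of size $L$ via Pittel's inequality (Lemma~\ref{lem:SelToSet}), with $\log\log M$ absorbed into $q_0\ll_\gamma\log N$. The only cosmetic difference is that the paper fixes $q_0=2\gamma\log N$ outright rather than writing $C\gamma\log N$ with $C$ to be chosen, but the bookkeeping is identical.
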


\begin{proof}We fix a value of $\lambda$ suitable to apply Lemma \ref{lem:Bgenco} (with $\delta= L/N$ and $q_0= 2 \gamma \log(N)$ fixed). For a real value $t > 1$, we let $E_t$ denote the event that a set $S$ of size of $L$ selected uniformly at random from $[N]$ satisfies
\[  \sup_{\substack{||\{a_i\}||_{\ell^2} \leq 1 \\ \text{support}(\{a_i\}) \leq M }} \int_\mathbb{T} \Gamma_K \left(\frac{\sum_{ i \in S} a_i \phi_i}{\lambda} \right)\leq t.\]

Similarly, for independent selectors $\{\xi_i\}$ with mean $\delta = L/N$, we let $E_t'$ denote the event that
\[  \sup_{\substack{||\{a_i\}||_{\ell^2} \leq 1 \\ \text{support}(\{a_i\}) \leq M }} \int_\mathbb{T} \Gamma_K \left( \frac{ \sum_i a_i \xi_i(\omega) \phi_i}{\lambda} \right) \leq t.\]

We begin by obtaining a lower bound on the probability of the event $E_t'$ using Lemma \ref{lem:Bgenco}. We have by Chebyshev's inequality:
\[ \mathbb{P} \left[ \sup_{||\{a_i\}||_{\ell^2} \leq 1} \int_\mathbb{T} \Gamma_K \left( \frac{ \sum_i a_i \xi_i(\omega) \phi_i}{\lambda} \right) > t \right] \leq t^{-q_0}.\]
Hence, $\mathbb{P}[E'_t] \geq 1 - t^{-q_0}$ .

Next we observe the relationship between $\mathbb{P}[E_t]$ and $\mathbb{P}[E'_t]$. By applying Lemma \ref{lem:SelToSet} to the complements of the events $E_t$ and $E'_t$, we have $1 -\mathbb{P}[E_t] \ll \sqrt{L}\cdot  (1-\mathbb{P}[E'_t])$. This implies that
\[ \mathbb{P}[E_t] \geq  1 - C \sqrt{L}\cdot t^{-q_0} \geq 1 - C \sqrt{N} \cdot t^{-2 \gamma \log(N)}\]
for some positive constant $C$.

Thus for a uniformly random subset of size $L$, the probability of the event $E_{2}$, that is that (\ref{intervalReq}) holds, is at least $1 - c N^{-\gamma}$.

\end{proof}

\begin{corollary}\label{cor:Lpart1} Let $\{\phi_i\}_{i=1}^N$ be a bounded orthonormal system. Let $1 \leq L \leq N$ and $\gamma > 1$. It holds with probability at least $1 - c N^{-\gamma}$ (for some universal constant $c$) that
a subset of $S \subseteq [N]$ of size $L$ chosen uniformly randomly satisfies:
\[ \sup_{\substack{||\{a_i\}||_{\ell^2} \leq 1 \\ \text{support}(\{a_i\}) \leq M }} \left|\left| \sum_{i \in S} a_i \phi_i  \right|\right|_{\Gamma_{(N/L)^{1/(2p-4)} }} \ll_{p,\gamma}  \left(\frac{\log N}{\log (\frac{N}{L})}\right)^{\frac{1}{4}}\sqrt{\log M}   \]
for all $M$ in the range $1\leq M \leq L$.

\end{corollary}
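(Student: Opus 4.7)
The plan is to apply Lemma \ref{lem:Lpart} with the specific choice $K := (N/L)^{1/(2p-4)}$, convert the resulting integral inequality into a statement about the Luxemburg $\Gamma_K$-norm, and finally union bound over the allowed values of $M$.

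First, fix $M \in \{1, \ldots, L\}$ and set $\delta = L/N$ together with $K = (N/L)^{1/(2p-4)}$. A direct calculation gives
\[\delta^{1/4} K^{(p-2)/2} = (L/N)^{1/4} (N/L)^{(p-2)/(2(2p-4))} = (L/N)^{1/4} (N/L)^{1/4} = 1,\]
so the first term in the $\lambda$-bound of Lemma \ref{lem:Lpart} contributes only $\sqrt{\log M}$. Since $p > 2$ we have $1/(2p) < 1/4$, and since $\log N / \log(N/L) \geq 1$, the third term is dominated by the second. Hence an admissible choice of $\lambda$ satisfies
\[\lambda \ll_{p,\gamma} \left(\frac{\log N}{\log(N/L)}\right)^{1/4} \sqrt{\log M}.\]

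Second, we translate the integral inequality into a norm inequality. Applying Lemma \ref{lem:Lpart} with $\gamma$ replaced by $\gamma + 1$, we obtain that with probability at least $1 - c N^{-\gamma - 1}$ a uniformly random subset $S \subseteq [N]$ of size $L$ satisfies
\[\int_\mathbb{T} \Gamma_K\!\left(\frac{\sum_{i \in S} a_i \phi_i}{\lambda}\right) \leq 2\]
for every $\{a_i\}$ with $||\{a_i\}||_{\ell^2} \leq 1$ and $|\mathrm{support}(\{a_i\})| \leq M$. Since $\Gamma_K$ is convex with $\Gamma_K(0) = 0$, we have $\Gamma_K(t/2) \leq \Gamma_K(t)/2$ for all $t$, and therefore $\int \Gamma_K(g/(2\lambda)) \leq 1$, which by Definition \ref{def:GammaNorm} yields $||g||_{\Gamma_K} \leq 2\lambda$, uniformly in $g$ of the allowed form.

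Finally, we union bound over the (at most $L \leq N$) values $M \in \{1, \ldots, L\}$. The total failure probability is at most $N \cdot c N^{-\gamma - 1} = c N^{-\gamma}$, which gives the required high-probability statement holding simultaneously for all $M$. The substantive content of the corollary is already absorbed into Lemma \ref{lem:Lpart}; the only two observations needed are the algebraic cancellation produced by the choice $K = (N/L)^{1/(2p-4)}$ in $\delta^{1/4} K^{(p-2)/2}$, and the standard convexity argument that converts the $\Gamma_K$-integral estimate into a Luxemburg norm estimate.
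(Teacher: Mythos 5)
Your proof is correct and takes the same route as the paper: apply Lemma \ref{lem:Lpart} with $K=(N/L)^{1/(2p-4)}$ and $\gamma+1$, observe the cancellation $\delta^{1/4}K^{(p-2)/2}=1$, convert the integral bound to a Luxemburg-norm bound by convexity, and union bound over the $\leq N$ values of $M$. The paper's own proof is terser but identical in substance; you have merely filled in the algebra and the standard convexity step that the authors left implicit.
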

\begin{proof}
We apply the previous lemma with parameters $\gamma+1$ and $K:= \left(\frac{N}{L}\right)^{\frac{1}{2p-4}}$ and employ a union bound over the $\leq N$ values of $M$. We note that once we have a $\lambda$ such that
\[\int \Gamma_K\left(\frac{\sum_{i \in S} a_i \phi_i}{\lambda}\right) \leq 2,\]
we can multiply $\lambda$ by a constant to obtain an upper bound on the $\Gamma_K$-norm.
\end{proof}

\begin{corollary}\label{cor:Lpart2} Let $\{\phi_i\}_{i=1}^N$ be a bounded orthonormal system. For any $\gamma > 1$, it holds with probability at least $1- c N^{-\gamma}$ (for some universal $c$) that a uniformly randomly selected permutation $\sigma : [N] \rightarrow [N]$ satisfies the following.  If $I$ is an subinterval in $[N]$ then (for all $M \leq |I|$) we have
\[ \sup_{\substack{||\{a_i\}||_{\ell^2} \leq 1 \\ \text{support}(\{a_i\}) \leq M }} \left|\left| \sum_{i \in I} a_n \phi_{\sigma(n)}   \right|\right|_{\Gamma_{(N/|I|)^{1/(2p-4)} }} \ll_{p,\gamma}  \left(\frac{\log N}{\log (\frac{N}{|I|})}\right)^{\frac{1}{4}}\sqrt{\log M}   \]
(uniformly in the choice of interval $I$ and $M$).
\end{corollary}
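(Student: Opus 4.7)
The plan is to reduce to Corollary~\ref{cor:Lpart1} interval by interval and then apply a union bound over all choices of subinterval $I \subseteq [N]$.

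First, I would fix a subinterval $I \subseteq [N]$ and observe that when $\sigma$ is a uniformly random permutation of $[N]$, the image set $\sigma(I) := \{\sigma(n) : n \in I\}$ is a uniformly random subset of $[N]$ of size $|I|$. This is standard: for any subset $S \subseteq [N]$ with $|S| = |I|$, the number of permutations $\sigma$ with $\sigma(I) = S$ is exactly $|I|!\,(N-|I|)!$, independent of $S$. Moreover, via the change of coefficients $b_{\sigma(n)} := a_n$, sums of the form $\sum_{n \in I} a_n \phi_{\sigma(n)}$ with $\|\{a_n\}\|_{\ell^2} \leq 1$ and support size $\leq M$ are in natural bijection with sums $\sum_{j \in \sigma(I)} b_j \phi_j$ with $\|\{b_j\}\|_{\ell^2} \leq 1$ and support size $\leq M$.

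Next, I would apply Corollary~\ref{cor:Lpart1} with $L = |I|$ and the replacement parameter $\gamma' := \gamma + 2$ in place of $\gamma$. This yields that for the fixed interval $I$, with probability at least $1 - c N^{-(\gamma+2)}$ over the choice of $\sigma$, the estimate
\[ \sup_{\substack{\|\{a_i\}\|_{\ell^2} \leq 1 \\ |\mathrm{support}(\{a_i\})| \leq M}} \left\| \sum_{n \in I} a_n \phi_{\sigma(n)} \right\|_{\Gamma_{(N/|I|)^{1/(2p-4)}}} \ll_{p,\gamma} \left(\frac{\log N}{\log(N/|I|)}\right)^{1/4} \sqrt{\log M} \]
holds simultaneously for every $1 \leq M \leq |I|$ (the ``for all $M$'' part is already built into the statement of Corollary~\ref{cor:Lpart1}).

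Finally, I would take a union bound over all subintervals $I \subseteq [N]$. Since there are at most $\binom{N+1}{2} \leq N^2$ such intervals, the total failure probability is at most
\[ N^2 \cdot c N^{-(\gamma+2)} = c N^{-\gamma}, \]
so with probability at least $1 - cN^{-\gamma}$ the bound holds uniformly in both $I$ and $M$, which is exactly the statement of the corollary. There is no real obstacle here: the only thing to verify carefully is that the image of a fixed set under a random permutation is uniform on subsets of the correct size, and that the polynomial union-bound loss of $N^2$ is absorbed by bumping $\gamma$ up by $2$ when invoking the previous corollary.
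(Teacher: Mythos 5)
Your proposal is correct and follows precisely the paper's own argument: apply Corollary~\ref{cor:Lpart1} with $L = |I|$ and parameter $\gamma+2$, use that $\sigma(I)$ is a uniformly random $|I|$-subset, and union bound over the $\sim N^2$ intervals. The paper's proof is essentially just a terser version of what you wrote.
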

\begin{proof}
We apply the previous corollary with the parameter $\gamma+2$ for each interval $I$ with $L:= |I|$. We then employ the union bound over the $\sim N^2$ intervals. Note that for each $I$, the image of $I$ under a randomly chosen permutation is equivalent to a randomly chosen subset of size $|I|$ inside $[N]$.

\end{proof}

\section{Bourgain's Theorem}

The estimates in the previous section are not strong enough to deduce Theorem \ref{thm:MainI}, however they do allow us to reprove Bourgain's theorem and show that it holds with large probability, a fact we will require later.

\begin{proposition}\label{prop:bourgain}Let $\{\phi_n\}_{n=1}^{N}$ denote a bounded orthonormal system. For any $\gamma > 1$ it holds with probability at least $1- c N^{-\gamma}$ (for some universal $c$) that a uniformly randomly selected permutation $\sigma : [N] \rightarrow [N]$ satisfies
\[||\mathcal{M}_{\sigma}f ||_{2} \ll_\gamma \log \log(N) ||f||_{2} \]
where $\mathcal{M}_{\sigma} f (x):= \max_{ \ell \leq N } \left|\sum_{ n=1}^{\ell} a_n \phi_{\sigma(n)} (x)\right|$ for all $f= \sum_{n\in [N]} a_n \phi_{\sigma(n)}$ in the span of the system.
\end{proposition}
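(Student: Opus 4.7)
The plan is to derive Proposition \ref{prop:bourgain} by combining Corollary \ref{cor:Lpart2} with the mass-decomposition framework behind Lemma \ref{lem:maxde}, mirroring Bourgain's original iterated-logarithm argument. I first condition on the event of probability $\geq 1-cN^{-\gamma}$ supplied by Corollary \ref{cor:Lpart2}: under the random permutation $\sigma$, for every proper subinterval $I \subsetneq [N]$ and every $f_I = \sum_{n\in I}a_n\phi_{\sigma(n)}$,
\[ \|f_I\|_{\Gamma_{K_I}} \ll_{p,\gamma} \Delta_I\|f_I\|_2, \qquad K_I := (N/|I|)^{1/(2p-4)}, \quad \Delta_I := \left(\tfrac{\log N}{\log(N/|I|)}\right)^{1/4}\sqrt{\log |I|}. \]

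Fix $f = \sum a_n\phi_{\sigma(n)}$ with $\|f\|_2=1$. As in the proof of Lemma \ref{lem:maxde}, discard the coefficients of magnitude $\leq N^{-1}$ (whose contribution to $\mathcal{M}_\sigma f$ is $O(1)$ by $|\phi_n|\leq A$) and replace $\mathcal{M}_\sigma f_L$ by the larger pointwise quantity $\tilde f_L$. Perform the mass decomposition of $f_L$ into dyadic intervals $\{I_{k,s}\}$ with $M(I_{k,s})\leq 2^{-k}$, and apply Lemma \ref{lem:pieces} to each restriction $f_{I_{k,s}}$ with $K = K_{I_{k,s}}$: this yields a splitting $f_{I_{k,s}} = G_{k,s}+E_{k,s}$ with $\|G_{k,s}\|_p \ll \Delta_{I_{k,s}}\|f_{I_{k,s}}\|_2$ and $\|E_{k,s}\|_2 \ll \Delta_{I_{k,s}}K_{I_{k,s}}^{(2-p)/2}\|f_{I_{k,s}}\|_2$. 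Substituting into the telescoping pointwise bound for $\tilde f_L$ derived in Lemma \ref{lem:maxde} and taking $L^2$-norms---summing the $G$-pieces via Minkowski on disjoint supports in $L^p$ and the $E$-pieces via orthogonality on each level---reduces everything to a double sum over mass levels $k$ and positions $s$.

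The main obstacle is arranging this summation so that the final bound is $O(\log\log N)$ rather than a positive power of $\log N$. The quantity $\Delta_I$ degenerates for intervals with $|I|$ comparable to $N$, so one must split the mass decomposition into two regimes: when $|I_{k,s}|$ is close to $N$ the mass bound $2^{-k}$ itself controls the contribution geometrically (and the intervals with $|I_{k,s}| \leq N/2^{k/2}$, say, are absent at small $k$ only in mass but the raw $L^2$ bound suffices), while when $|I_{k,s}| \ll N$ the $\Gamma_{K_I}$-estimate gives a genuine improvement. Balancing the choice of $K$ at each scale against the $\|G\|_p/\|E\|_2$ trade-off in Lemma \ref{lem:pieces}, the contribution concentrates on $O(\log\log N)$ effective scales, and the constant in the final bound depends on $\gamma$ through the implicit constant furnished by Corollary \ref{cor:Lpart2}. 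This reproduces the iterated-logarithm factor of Bourgain's theorem while maintaining the $1-cN^{-\gamma}$ probability guarantee that will be needed later.
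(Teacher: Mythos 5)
Your setup is correct and matches the paper: you condition on Corollary~\ref{cor:Lpart2}, discard coefficients of size $\leq N^{-1}$, pass to the mass decomposition, and split intervals by the length-versus-mass dichotomy $|I_{k,s}|\lessgtr 2^{-k/2}N$. But the step that actually produces $\log\log N$ rather than a positive power of $\log N$ is missing, and the phrase ``the contribution concentrates on $O(\log\log N)$ effective scales'' papers over the gap rather than filling it.

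Concretely: the per-interval estimate from Corollary~\ref{cor:Lpart2} carries a factor $\Delta_I \approx (\log N/\log(N/|I|))^{1/4}\sqrt{\log|I|}$, which is at least of order $(\log N)^{1/2}$ and can be as large as $(\log N)^{3/4}$. If you apply Lemma~\ref{lem:pieces} on each interval $I_{k,s}$ and sum over all $k$ up to $\lceil\log N\rceil$ (as the telescoping bound from Lemma~\ref{lem:maxde} would have you do), the $G$-contribution is of order $\sum_k \Delta_{I_{k,s}} 2^{k(1/p-1/2)}$, which converges geometrically in $k$ but retains the $(\log N)^{3/4}$ prefactor. Orthogonality alone on the small intervals gives $O(1)$ per level, hence $O(\log N)$ over all levels. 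Neither ingredient, nor a balance between them, yields $O(\log\log N)$ by itself; the two ``regimes'' you describe both cost $\gg\log\log N$ if summed over all $k$.

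The paper's actual mechanism, which you do not reproduce, is a truncation at a single mass-level $k^*=\lceil 15\log\log N\rceil$. For $k\leq k^*$ one uses raw orthogonality on the $I_k^a$-intervals, giving exactly $O(k^*)=O(\log\log N)$. The entire tail $k>k^*$ is absorbed into the maximal functions $\tilde f_{I_{k^*,s}}$ at the single level $k^*$, to which Lemma~\ref{lem:maxde} is applied once. At that level $\|f_{I_{k^*,s}}\|_2\leq 2^{-k^*/2}\approx(\log N)^{-15/2}$ and (for $I_{k^*,s}\in I_{k^*}^a$) $K\geq 2^{k^*/(4(p-2))}$, and with $p=5/2$ these geometric gains overwhelm the $(\log N)^{3/4}$ and $(\log N)^{7/4}$ losses coming from $\Delta$ and the $\log N$ in Lemma~\ref{lem:maxde}, so the fourth term is $O(1)$. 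Your proposal applies the $G/E$ splitting level-by-level and hopes the scales self-localize; that computation does not close. The argument needs the one-shot truncation at $k^*$, the passage to $\tilde f_{I_{k^*,s}}$, and the explicit cancellation between $2^{-k^*/2}$ and the polylogarithmic losses, and none of these appear in the proposal.
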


\begin{proof}
We select a permutation $\sigma$ satisfying the conclusion of Corollary \ref{cor:Lpart2}.
We fix $f= \sum_{n\in [N]} a_n \phi_{\sigma(n)}$ and we consider intervals $I_{k,s}$ and points $i_{k,s}$ in a corresponding mass decomposition of $[N]$. For each $k$, we define $I_k^a$ to be the collection of intervals $I_{k,s}$ such that $|I_{k,s}|\leq 2^{-k/2}N$ and $I_k^b$ to be the collection of intervals $I_{k,s}$ such that $|I_{k,s}| > 2^{-k/2}N$. We observe that $|I^{b}_{k}| \leq 2^{k/2}$.

At each fixed point $x \in \mathbb{T}$, the value of $\mathcal{M}_{\sigma}f(x)$ is achieved on some subinterval of $[N]$ that can be decomposed into a union of dyadic intervals $I_{k,s}$ and points $i_{k,s}$ such that there is at most one interval $I_{k,s}$ for each value of $k$. We let $k^*:= \lceil 15 \log \log (N)\rceil$.
We then have the pointwise inequality
\begin{eqnarray*}\nonumber
  \mathcal{M}_{\sigma}f(x) & \ll & \sum_{k} \left( \sum_{\substack{s \\ I_{k,s} \in I_k^{b}   }} |f_{I_{k,s}}|^2\right)^{1/2} +  \sum_{k\leq k^*} \left( \sum_{\substack{s \\ I_{k,s} \in I_k^{a}   }} |f_{I_{k,s}}|^{2} \right)^{1/2}  \\ \nonumber
   & & +   \sum_{k} \max_{s} |f_{i_{k,s}}| +     \left(\sum_{\substack{1 \leq s \leq  2^{k^*} \\ I_{k^*, s} \in I^a_{k^*}}} |\tilde{f}_{I_{k^*,s}}|^p\right)^{1/p}.
\end{eqnarray*}
To see this, note that $\left(\sum_{I_{k,s} \in I_k^b} |f_{I_{k,s}}|^2\right)^{\frac{1}{2}}$ is an upper bound on the largest value of $|f_{I_{k,s}}|$ over all $s$ such that $I_{k,s} \in I_k^b$ for each $k$ (for example), and the final term above captures the contribution from dyadic subintervals for values of $k > k^*$.

We now consider the $L^2$ norm of each of these terms. Recall that $\int |f_{I_{k,s}}|^2 \leq 2^{-k}$ and $|I^{b}_{k}| \leq 2^{k/2}$, thus we have
\[ \left|\left|\sum_{k} \left( \sum_{\substack{s \\ I_{k,s} \in I^{b}_{k}   }} |f_{I_{k,s}}|^2\right)^{1/2} \right|\right|_{2} \leq \sum_{k} \left(\sum_{\substack{s \\ I_{k,s} \in I^{b}_k}} ||f_{I_{k,s}}||_{2}^2  \right)^{1/2} \leq \sum_{k} 2^{-k/4} \ll 1.\]
Next,
\[\left|\left| \sum_{k\leq k^*} \left( \sum_{\substack{s \\ I_{k,s} \in I^{a}_k   }} |f_{I_{k,s}}|^{2} \right)^{1/2} \right|\right|_{2} \leq \sum_{ k \leq k^*}\left|\left| \left( \sum_{1 \leq s \leq 2^{k}  } |f_{I_{k,s}}|^{2} \right)^{1/2} \right|\right|_{2} \ll \log \log(N).  \]
We also have that, using that $|\phi_n(x)|\leq A$ and $||f_{i_{k,s}}||_{2}^2 \leq 2^{-k+1}$, we have the pointwise bound
\[  \sum_{k} \max_{s} |f_{i_{k,s}}|  \ll \sum_{k} 2^{-k/2} \ll  1.\]

Finally, we must consider the quantity
\[ \left|\left| \left(\sum_{\substack{1 \leq s \leq 2^{k^*}\\ I_{k^*,s} \in I^a_{k^*}}} |\tilde{f}_{I_{k^*,s}}|^p\right)^{1/p} \right|\right|_{2}.\]
Using Corollary \ref{cor:Lpart2} and Lemma \ref{lem:maxde}, we may decompose $\tilde{f}_{I_{k^*,s}} = G_{I_{k^*,s}} + E_{I_{k^*,s}}$ where $|| G_{I_{k^*,s}} ||_{p} \ll_{p,\gamma}  \log^{3/4}(N) || f_{I_{k^*,s}} ||_{2} $ and $|| E_{I_{k^*,s}}||_{2}  \ll_{p,\gamma}  K^{(2-p)/2 }  \log^{7/4}(N) || f_{I_{k^*,s}} ||_{2}    $
where $K = \left( \frac{N}{|I_{k^*,s}|}\right)^{\frac{1}{2p-4}}$. For $I_{k^*,s} \in I_{k^*}^a$, we have $|I_{k^*,s}| \leq 2^{-k^*/2} N$, so $K \geq 2^{\frac{k^*}{4(p-2)}}$.


Setting $p = 5/2$, we now have
\[ \left|\left| \left(\sum_{1 \leq s \leq  2^{k^*}} |G_{I_{k^*,s}}|^p\right)^{1/p} \right|\right|_{2} \leq
\left|\left| \left(\sum_{1 \leq s \leq  2^{k^*}} |G_{I_{k^*,s}}|^p\right)^{1/p} \right|\right|_{p} = \left(\sum_{1 \leq s \leq  2^{k^*}}  || G_{I_{k^*,s}} ||_{5/2}^{5/2} \right)^{2/5} \]
\[  \ll  \log^{3/4}(N) \left(\sum_{1 \leq s \leq  2^{k^*}}  || f_{I_{k^*,s}} ||_{2}^{5/2} \right)^{2/5}. \]
Now $\sum_{1 \leq s \leq  2^{k^*}} || f_{I_{k^*,s}} ||_{2}^{5/2} \ll 2^{k^*} \left( 2^{-k^*/2}\right)^{5/2} \ll \log^{-15/4}(N) $, which implies the quantity  is
\[ \ll \log^{3/4}(N) \log^{-3/2}(N) \ll 1.\]

We last consider
\[\left|\left| \left(\sum_{\substack{1 \leq s \leq  2^{k^*}\\ I_{k^*,s} \in I^a_{k^*}}} |E_{I_{k^*,s}}|^p\right)^{1/p} \right|\right|_{2} \leq \left( \int \sum_{\substack{1 \leq s \leq  2^{k^*}\\ I_{k^*,s} \in I^a_{k^*}}} |E_{I_{k^*,s}}|^2\right)^{\frac{1}{2}}\]
\[\ll \left( \sum_{\substack{1 \leq s \leq 2^{k^*}\\ I^{k^*,s} \in I^a_{k^*}}} \log^{7/2}(N) K^{2-p} ||f_{I_{k^*,s}}||_2^2\right)^{\frac{1}{2}}.\]
We recall that $K = \left(\frac{N}{|I_{k^*,s}|}\right)^{\frac{1}{2(p-2)}}$, so for $I_{k^*,s} \in I^a_{k^*}$, we have $K \geq 2^{\frac{k^*}{4(p-2)}}$. Thus, $K^{\frac{2-p}{2}} \leq 2^{-\frac{k^*}{8}}$, so the quantity above is $\ll \log^{7/4}(N) \log^{-15/8}(N) \ll 1$.
This completes the proof.
\end{proof}

\section{Improving the Bound by Passing to Random Subsets}

\subsection{Bounding the Expectation}
For a fixed interval $I \subseteq [N]$ and a bounded ONS $\{\phi_i\}_{i=1}^N$, we define the operator $U_I$ from $\ell^2_N$ to the Banach space of real valued functions on $\mathbb{T}$ with norm $||\cdot ||_{\Gamma_K}$ by mapping a sequence $\{a_i\}_{i=1}^N$ to the function $\sum_{i \in I} a_i \phi_i$.

\begin{lemma}\label{lem:TalApp} Let $\{\phi_i\}_{i=1}^N$ be a bounded ONS. Let $I$ be a fixed interval satisfying
\begin{equation}\label{Tal1}
\sup_{||\{a_i\}||_{\ell^2} \leq 1} \left|\left| \sum_{i \in I} a_i \phi_i\right|\right|_{\Gamma_{K}} \leq \lambda
\end{equation}
for a fixed $\lambda$ and $K$. Let $\{\xi_i\}_{i\in I}$ be independent selectors with mean $\nu$. We let $S \subseteq I$ denote the set of indices $i \in I$ such that $\xi_i = 1$. Then
\[ \mathbb{E}\left[ \sup_{||\{a_i\}||_{\ell^2} \leq 1} \left|\left| \sum_{i \in S} a_i \phi_i \right|\right|_{\Gamma_K}^2 \right] \ll \frac{\lambda^2 + C_p \log N}{ \log \left(\frac{1}{\nu}\right)}.\]
\end{lemma}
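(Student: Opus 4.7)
The plan is to apply Talagrand's lemma (Lemma~\ref{lem:Talagrand}) essentially verbatim to the operator $U_I$ defined just before the statement. All of the real depth is hidden inside Talagrand's result, so what remains for this lemma is to verify the hypothesis of Lemma~\ref{lem:Talagrand} for the target norm $\|\cdot\|_{\Gamma_K}$ and to interpret $\|U_I|_S\|$ correctly.

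First I would check that the Banach space of real-valued functions on $\mathbb{T}$ equipped with $\|\cdot\|_{\Gamma_K}$ fits the framework of Lemma~\ref{lem:Talagrand}: the one nontrivial requirement is that the target norm be dominated by $\|\cdot\|_p$ for some $p > 2$, which is supplied by Lemma~\ref{lem:gammain} in the form $\|f\|_{\Gamma_K} \leq \|f\|_p$ for the fixed $p \in (2,3)$ used in the definition of $\Gamma_K$. With this in hand, $U_I : \ell^2_N \to (\text{functions on }\mathbb{T},\|\cdot\|_{\Gamma_K})$ is a bounded linear operator, and hypothesis~(\ref{Tal1}) says exactly that its operator norm satisfies $\|U_I\| \leq \lambda$.

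Next I would note that the restriction $U_I|_S$ to sequences supported on $S = \{i \in I : \xi_i = 1\}$ has operator norm
\[ \|U_I|_S\| \;=\; \sup_{\|\{a_i\}\|_{\ell^2}\leq 1}\; \left\| \sum_{i \in S} a_i \phi_i \right\|_{\Gamma_K}, \]
so $\|U_I|_S\|^2$ is exactly the random quantity whose expectation we wish to control. Applying Lemma~\ref{lem:Talagrand} with $\delta = \nu$ to the operator $U_I$ then immediately yields
\[ \mathbb{E}\bigl[\|U_I|_S\|^2\bigr] \;\ll\; \frac{\|U_I\|^2 + C_p \log N}{\log(1/\nu)} \;\leq\; \frac{\lambda^2 + C_p \log N}{\log(1/\nu)}, \]
which is the claim. (If one prefers to use only the $|I|$ genuine selectors, apply the lemma with ambient dimension $|I|$ and use $\log|I| \leq \log N$; the indices outside $I$ can equivalently be absorbed by taking deterministically zero selectors there.)

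The expected difficulty is minimal: since Lemma~\ref{lem:Talagrand} is invoked as a black box, the only conceptual step is to recognize that its hypothesis is met by the Orlicz norm $\|\cdot\|_{\Gamma_K}$ via the $L^p$ bound of Lemma~\ref{lem:gammain}; no further technical work is needed. In effect this lemma is just a translation of Talagrand's theorem into the Orlicz-norm language required for the rest of the argument.
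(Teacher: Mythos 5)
Your proof is correct and takes essentially the same route as the paper: the paper's proof simply applies Lemma \ref{lem:Talagrand} to $U_I$ with $\delta = \nu$ and bounds $||U_I|| \leq \lambda$ via (\ref{Tal1}). The one small addition you make---explicitly invoking Lemma \ref{lem:gammain} to verify that $||\cdot||_{\Gamma_K} \leq ||\cdot||_p$ so that the hypothesis of Talagrand's lemma is met---is left implicit in the paper but is exactly the right check to perform.
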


\begin{proof} We apply Lemma \ref{lem:Talagrand} to the operator $U_I$. This yields
\[
\mathbb{E}\left[ \sup_{||\{a_i\}||_{\ell^2} \leq 1} \left|\left| \sum_{i \in S} a_i \phi_i \right|\right|_{\Gamma_K}^2 \right] \ll \left(\log \left(\frac{1}{\nu}\right)\right)^{-1} \left(C_p \log N + \sup_{||\{a_i\}||_{\ell^2} \leq 1} \left|\left| \sum_{i \in I} a_i \phi_i \right|\right|_{\Gamma_K}^2\right).
\]
Using the interval $I$ satisfies (\ref{Tal1}), we see this quantity is $\ll \frac{\lambda^2 + C_p \log N }{ \log \left(\frac{1}{\nu}\right)}.$

\end{proof}

\subsection{Bounding the Concentration}
We now consider the concentration of the random variable $\sup_{||\{a_i\}||_{\ell^2} \leq 1} \left|\left| \sum_{i \in S} a_i \phi_i \right|\right|_{\Gamma_K}$ around its expectation.

\begin{lemma}\label{lem:restest}  Let $\{\phi_i\}_{i=1}^N$ be a bounded ONS. Fix $0 < \beta < 1$ and let $L \leq N$, $\delta := L/N$. Then for any $\gamma >1 $ it holds with probability at least $1- c N^{-\gamma}$ (for some universal $c$) that a uniformly randomly selected subset $S$ of $[N]$ of size $L$ satisfies
\begin{equation}
\sup_{||\{a_i\}||_{\ell^2} \leq 1} \left|\left| \sum_{i \in S} a_i \phi_i\right|\right|_{\Gamma_{K}} \ll_{p,\gamma} \frac{\log^{3/4}(N)}{\log^{3/4}(\frac{N}{L})}
\end{equation}
for $K:= \min \left\{ \left(\frac{1}{\delta}\right)^{1/(4p-8)}, \log^{\frac{7}{2p-4}}(N)\right\} $ and $2 < p <3$.
\end{lemma}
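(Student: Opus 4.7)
The argument combines Bourgain's chaining bound (Corollary \ref{cor:Lpart1}), Talagrand's expectation estimate (Lemma \ref{lem:TalApp}), and Talagrand's concentration inequality (Lemma \ref{lem:Ledoux}), applied across two nested random subsets. Since $K \le K_1 := (N/L)^{1/(4p-8)}$ and the norm $\|\cdot\|_{\Gamma_K}$ is monotone nondecreasing in $K$ (as $\Gamma_K(t)$ is monotone nondecreasing in $K$ for each fixed $t$), it suffices to prove the estimate with $K_1$ in place of $K$. Set the intermediate scale $L_1 := \lceil\sqrt{NL}\rceil$, so that $\log(N/L_1) \asymp \log(L_1/L) \asymp \tfrac{1}{2}\log(N/L)$. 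By Corollary \ref{cor:Lpart1} applied with $L_1$ in place of $L$, $M = L_1$, and failure parameter $\gamma+2$, with probability at least $1 - cN^{-\gamma-2}$ a uniform random subset $S_1 \subseteq [N]$ of size $L_1$ satisfies
$$\lambda_1 := \sup_{\|a\|_{\ell^2}\le 1}\Bigl\|\sum_{i \in S_1} a_i\phi_i\Bigr\|_{\Gamma_{K_1}} \ll_{p,\gamma} \frac{\log^{3/4}(N)}{\log^{1/4}(N/L)}.$$
Condition on such a good $S_1$, and view the further uniform subsample $S \subseteq S_1$ of size $L$ as arising from independent $\{0,1\}$-selectors $\{\xi_i\}_{i\in S_1}$ of mean $\nu := L/L_1 = \sqrt{L/N}$, the transfer to an exact-size subset being deferred to the end via Pittel's inequality (Lemma \ref{lem:SelToSet}).

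For the expectation, apply Lemma \ref{lem:TalApp} to the ONS $\{\phi_i\}_{i \in S_1}$ with the bound $\lambda_1$ to obtain
$$\mathbb{E}_\xi\Bigl[\sup_{\|a\|_{\ell^2}\le 1}\Bigl\|\sum_{i\in S_1}\xi_i a_i\phi_i\Bigr\|_{\Gamma_{K_1}}^{\,2}\Bigr] \ll \frac{\lambda_1^2 + C_p\log N}{\log(1/\nu)} \ll \frac{\log^{3/2}(N)}{\log^{3/2}(N/L)},$$
using $\log(1/\nu) \asymp \tfrac{1}{2}\log(N/L)$ and $\lambda_1^2 \gg \log N$. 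Thus the second moment of the target random variable $Z$ already lies at the correct scale, giving $\sqrt{\mathbb{E}[Z^2]} \ll \log^{3/4}(N)/\log^{3/4}(N/L)$.

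To upgrade this to a high-probability tail, duality (Lemma \ref{lem:dual}) lets us write
$$Z \asymp \sup_{\substack{\|a\|_{\ell^2}\le 1 \\ \|g\|_{\Gamma^*_{K_1}}\le 1}} \sum_{i \in S_1} \xi_i \cdot a_i\langle g, \phi_i\rangle,$$
placing $Z$ in the empirical-process form required by Lemma \ref{lem:Ledoux}, with $Y_i := (i,\xi_i)$ and $\mathcal{F} := \{(j,x)\mapsto x\, a_j\langle g,\phi_j\rangle\}$. The pointwise bound $C$ is controlled via $|\phi_j|\le A$ and $\|g\|_2\ll pK_1^{p-1}$ (Lemma \ref{lem:supportBound}), while $\mathbb{E}\sigma^2 \lesssim \mathbb{E}[Z^2]$ via a dual reformulation (essentially $\sup_g\sum_i\xi_i\langle g,\phi_i\rangle^2 \lesssim Z^2$ by passing to adjoints between $\ell^2\to(L^2,\|\cdot\|_{\Gamma_{K_1}})$ and its transpose). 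Choosing the deviation $\tau$ at the scale $\sqrt{\mathbb{E}[Z^2]}$ and performing a dyadic truncation of the dual variable $g$ (analogous in spirit to Lemma \ref{lem:pieces}) keeps the relevant exponent in Lemma \ref{lem:Ledoux} in a regime that produces a tail $\le N^{-\gamma-2}$ on each dyadic piece. Summing over the $O(\log N)$ pieces, absorbing the Pittel conversion factor $\sqrt L\le\sqrt N$ into the probability margin, and combining with the Stage 1 failure probability yields the claimed $1 - cN^{-\gamma}$ overall bound.

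The principal obstacle is the concentration step: the pointwise constant $C$ forced by the dual norm can be as large as $K_1^{p-1}$, which vastly exceeds $\sqrt{\mathbb{E}[Z^2]}$, so a naive application of Lemma \ref{lem:Ledoux} places one deep in its Poisson regime with tail decay insufficient to reach $N^{-\gamma}$ at the desired scale of $\tau$. Overcoming this is precisely the ``stronger concentration'' step flagged in the introduction, and is resolved by the dyadic truncation of $g$ described above, which keeps each piece in the subgaussian regime of Lemma \ref{lem:Ledoux} while costing only logarithmic factors through the union bound.
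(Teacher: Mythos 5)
Your outline correctly reproduces the paper's two-stage structure: pass to an intermediate scale $L_1 = \sqrt{NL}$, apply Corollary \ref{cor:Lpart1} at that scale, control the expectation of the subsampled $\Gamma_K$-norm via Lemma \ref{lem:TalApp}, and then seek concentration via Lemma \ref{lem:Ledoux}. You also correctly identify the crux: the pointwise bound $C$ one is forced to feed Lemma \ref{lem:Ledoux} (via $\|g\|_2 \ll pK^{p-1}$ from Lemma \ref{lem:supportBound}) is far larger than the deviation scale $\tau$, which destroys the tail. However, your proposed resolution --- a ``dyadic truncation of the dual variable $g$'' producing $O(\log N)$ pieces each in the subgaussian regime --- does not match what the paper does and, as stated, does not close the gap. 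Truncating $g$ itself does not cap $|\langle\phi_i, g\rangle|$: the piece of $g$ living on $\{|g|>C\}$ still produces large inner products, and there is no obvious way to iterate a level-set decomposition of $g$ so that every piece simultaneously yields a small pointwise constant for Lemma \ref{lem:Ledoux} and a separately estimable contribution.

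What the paper actually does is truncate the \emph{squared inner products} $|\langle\phi_i,g\rangle|^2$ at a deliberately tiny threshold $C \asymp \log^{-1}(N)$ via the cutoffs $\chi_1,\chi_2$, and argue differently on the two pieces. The $\chi_1$-piece feeds into Lemma \ref{lem:Ledoux} with pointwise bound $C$, with $\mathbb{E}[\sigma^2]$ controlled using line (7.10) of \cite{Ledoux} together with $\mathbb{E}[Z] \ll (\lambda^2 + C_p\log N)/\log(1/\nu)$ from Lemma \ref{lem:TalApp}; the tiny choice of $C$ is what drives the exponent in Lemma \ref{lem:Ledoux}'s tail past $N^{-\gamma}$ even after absorbing the Pittel $\sqrt{L}$-factor. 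The $\chi_2$-piece is dispatched \emph{deterministically}, conditionally on the stage-one event: by Bessel's inequality and Lemma \ref{lem:supportBound}, at most $C^{-1}p^2K^{2p-2}$ (a polylogarithmic number of) indices can contribute, so that piece is bounded by $\sup\{\|\sum_{i\in S_1}a_i\phi_i\|_{\Gamma_K}^2 : \|a\|_{\ell^2}\le 1,\ |\text{support}(a)|\le C^{-1}p^2K^{2p-2}\}$, which by the support-restricted form of Corollary \ref{cor:Lpart1} (the $\sqrt{\log M}$-dependence with $M \sim \log^{O(1)}N$) contributes only $\ll (\log N/\log(N/L_1))^{1/2}\log\log N$. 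This restricted-support observation --- exploiting the $M$-dependence of Corollary \ref{cor:Lpart1} in place of a second round of concentration on the large-inner-product terms --- is the missing idea, and without it your concentration step does not close.
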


\begin{proof}
We will pass to a set of size $L$ in two stages. We first define an intermediate value $L_1\geq L$ such that $\frac{N}{L_1} = \frac{L_1}{L}$. We will set $K:= \min \left\{ \left(\frac{N}{L_1}\right)^{1/(2p-2)}, \log^{\frac{7}{2p-4}}(N)\right\}$. We then apply Corollary \ref{cor:Lpart1} to conclude that with sufficiently high probability, say at least $1-cN^{-(\gamma+2)}$, a random subset $W$ of size $L_1$ inside $[N]$ will satisfy
\begin{equation}\label{eq:lambdabound}
\sup_{||\{a_i\}||_{\ell^2} \leq 1} \left|\left| \sum_{i \in W} a_i \phi_i\right|\right|_{\Gamma_{K}} \ll_{p,\gamma} \left(\frac{ \log N}{\log \left(\frac{N}{L_1}\right)} \right)^{\frac{1}{4}} \cdot \sqrt{\log L_1}.
\end{equation}

We now condition on the above result, and consider choosing a random subset $S \subseteq W$ of size $L$.
We first note that for any such set $S$,
\[ \left|\left| \sum_{i \in S} a_i \phi_i \right|\right|_{\Gamma_K} = \sup_{ ||g||_{\Gamma^*} \leq 1} \langle \sum_{i \in S} a_i \phi_i, g\rangle,\]
where $||\cdot ||_{\Gamma^*}$ denotes the dual norm to $||\cdot ||_{\Gamma_K}$.
Using the Cauchy-Schwarz inequality, we then see that
\begin{equation}\label{eq:split1}
\sup_{||\{a_i\}||_{\ell^2} \leq 1} \left|\left| \sum_{i \in S} a_i \phi_i \right|\right|_{\Gamma_K} = \sup_{||\{a_i\}||_{\ell^2} \leq 1} \sup_{ ||g||_{\Gamma^*} \leq 1} \sum_{i \in S} a_i \langle \phi_i, g \rangle \leq \sup_{ ||g||_{\Gamma^*} \leq 1} \left( \sum_{i \in S} \left| \langle \phi_i, g\rangle \right|^2 \right)^{\frac{1}{2}}.
\end{equation}
In fact, noting that one can set $a_i = \frac{\langle \phi_i g \rangle}{\sqrt{\sum_{i \in S} \langle \phi_i, g \rangle^2}}$, we see that the inequality in (\ref{eq:split1}) is an equality.

We let $C$ be a threshold parameter that we will specify later. We define the functions $\chi_1, \chi_2 : \mathbb{R} \rightarrow \mathbb{R}$ as follows:
\[\chi_1(x) = \left\{
    \begin{array}{ll}
      x, & \hbox{if $|x| \leq C$;} \\
      0, & \hbox{otherwise}
    \end{array}
  \right. , \; \;
\chi_2(x) = \left\{
    \begin{array}{ll}
      0, & \hbox{if $|x| \leq C$;} \\
      x, & \hbox{otherwise}
    \end{array}
  \right.\]
We then have
\begin{equation}\label{eq:split2}
\sup_{ ||g||_{\Gamma^*} \leq 1}  \sum_{i \in S} \left| \langle \phi_i, g\rangle \right|^2 \leq \sup_{ ||g||_{\Gamma^*} \leq 1} \sum_{i \in S} \chi_1 \left( \left| \langle \phi_i, g\rangle \right|^2 \right) + \sup_{||h||_{\Gamma^*} \leq 1} \sum_{i \in S} \chi_2 \left( \left| \langle \phi_i, h\rangle \right|^2 \right).
\end{equation}

We will deal separately with the two quantities in (\ref{eq:split2}). To address the first quantity, we will employ Lemma \ref{lem:Ledoux}. In this case, our random variables $Y_1, \ldots, Y_N$ are defined as follows. We let $\Omega$ denote the probability space of the independent selectors $\{\xi_i\}_{i \in I}$, each with mean $\nu:= \frac{L}{L_1}$. Then each $\omega \in \Omega$ is associated to a subset $S \subseteq I$. (This distribution of $S$ differs of course from selecting a set of size exactly $L$, but we will analyze the relevant quantity in this case first and then derive a bound for the case of fixed size.) We define $Y_i(\omega)$ to be equal to $\phi_i$ when $i \in S$, and equal to 0 otherwise (more formally, the constant zero function from $\mathbb{T}$ to $\mathbb{C}$).

For notational convenience, we defined the random variable $Z:\Omega \rightarrow \mathbb{R}$ by
\[ Z(\omega) = \sup_{ ||g||_{\Gamma^*} \leq 1} \sum_{i \in S} \chi_1 \left( \left| \langle \phi_i, g\rangle \right|^2 \right),\]
where $S$ is determined from $\omega$ as described above.
Applying Lemma \ref{lem:Ledoux}, it follows that (for every positive real number $\tau$)
\begin{equation}\label{eq:Ledoux}
\mathbb{P}[ |Z- \mathbb{E}[Z]|\geq \tau] \leq 3 \exp \left( \frac{-\tau}{\kappa C} \log \left( 1 + \frac{C\tau}{\mathbb{E}[\sigma^2]}\right)\right),
\end{equation}
where $\kappa$ is a positive constant, and $\sigma^2 = \sup_{||g||_{\Gamma^*}\leq 1} \sum_{i \in S} \left| \chi_1 \left( \left| \langle \phi_i, g\rangle \right|^2 \right)\right|^2$.

Recalling our definition of $Z$, we observe
\[ Z(\omega) \leq \sup_{ ||g||_{\Gamma^*} \leq 1}  \sum_{i \in S} \left| \langle \phi_i, g\rangle \right|^2 =  \sup_{||\{a_i\}||_{\ell^2} \leq 1} \left|\left| \sum_{i \in S} a_i \phi_i \right|\right|_{\Gamma_K}^2.\]
(We have used here that (\ref{eq:split1}) is actually an equality.)
Consequently,
\[\mathbb{E} [Z] \leq \mathbb{E}\left[ \sup_{||\{a_i\}||_{\ell^2} \leq 1} \left|\left| \sum_{i \in S} a_i \phi_i \right|\right|_{\Gamma_K}^2 \right].\]
Applying Lemma \ref{lem:TalApp}, we have
\[ \mathbb{E}[Z] \ll \frac{\lambda^2 + C_p \log N }{ \log \left(\frac{1}{\nu}\right)},\]
where with sufficient probability we have $\lambda \ll_{p, \gamma} \log^{\frac{1}{4}}(N) \log^{-\frac{1}{4}}\left( \frac{N}{L_1}\right) \sqrt{\log L_1}$ from (\ref{eq:lambdabound}).
Thus, there exist a constant $A_1$ such that
\begin{equation}\label{eq:split3}
\mathbb{P}\left[ Z\geq \tau+ A_1\cdot \frac{\lambda^2 + C_p \log N}{\log \left(\frac{1}{\nu}\right)}\right] \leq 3 \exp \left( \frac{-\tau}{\kappa C} \log \left( 1 + \frac{C\tau}{\mathbb{E}[\sigma^2]}\right)\right).
\end{equation}

It remains to bound the quantity
\[\mathbb{E}[\sigma^2] = \mathbb{E}\left[ \sup_{||g||_{\Gamma^*}\leq 1} \sum_{i \in S} \left| \chi_1 \left( \left| \langle \phi_i, g\rangle \right|^2 \right)\right|^2\right].\]
Employing line (7.10) on p. 141 of \cite{Ledoux}, we see this is
\begin{equation}\label{eq:sigma1}
 \ll \sup_{||g||_{\Gamma^*}\leq 1} \sum_{i \in I} \mathbb{E} \left[ \left(\chi_1\left(|\langle \xi_i \phi_i, g\rangle|^2\right)\right)^2\right] + C \mathbb{E}\left[ \sup_{||g||_{\Gamma^*}\leq 1} \sum_{i \in S} \chi_1\left( |\langle \phi_i, g\rangle|^2\right) \right].
\end{equation}
By removing the cutoff at $C$, the latter quantity in (\ref{eq:sigma1}) is
\[ \leq C \mathbb{E}\left[ \sup_{||g||_{\Gamma^*}\leq 1} \sum_{i \in S} |\langle \phi_i, g\rangle|^2\right] = C \mathbb{E}\left[ \sup_{||\{a_i\}||_{\ell^2}\leq 1} \left|\left| \sum_{i \in S} a_i \phi_i \right|\right|_{\Gamma_K}^2 \right]\]
\[\ll C \left(\frac{\lambda^2 + C_p \log N }{ \log \left(\frac{1}{\nu}\right)}\right)\]
where we have recalled that (\ref{eq:split1}) is an equality and have applied Lemma \ref{lem:TalApp}.

We now consider the first quantity in (\ref{eq:sigma1}). Using the definition of $\chi_1$ and Lemma \ref{lem:supportBound}, we see it is
\[\ll C  \sup_{||g||_{\Gamma^*}\leq 1} \sum_{i \in I} \mathbb{E} \left[  |\langle \xi_i \phi_i, g\rangle|^2 \right]
\ll C \nu \sup_{||g||_{\Gamma^*}\leq 1} ||g||_{2}^2 \ll C \nu p^2 K^{2p-2}.\]

Putting this together we have, for universal constants $A_1$ and $A_2$, that
$$\mathbb{P}\left[ Z\geq \tau+ A_1\cdot \frac{\lambda^2 + C_p \log N}{\log \left(\frac{1}{\nu}\right)}\right]$$
\begin{equation}\label{eq:concentrationwp}
\leq 3 \exp \left( \frac{-\tau}{\kappa C} \log \left( 1 + \frac{C\tau}{A_2 C \nu p^2 K^{2p-2} + A_2 C \left(\frac{\lambda^2 + C_p \log N }{ \log \left(\frac{1}{\nu}\right)}\right)}\right)\right).
\end{equation}
We now set $\tau = A_1\cdot \frac{\lambda^2 + C_p \log N}{\log \left(\frac{1}{\nu}\right)}$. Since $K^{2p-2} \leq \frac{1}{\nu}$, we see that the quantity inside the logarithm in (\ref{eq:concentrationwp}) above is at least $1 + \alpha \tau$, where $\alpha$ is a positive constant depending on $p, \gamma$.
Since $\nu \geq \frac{1}{N}$, it is clear that $\tau$ is bounded away from 0. Thus, it suffices to take $C = A_{\gamma, p} \log^{-1}(N)$ (for some constant $A_{\gamma,p}$ dependent only on $p$ and $\gamma$).

Using Lemma \ref{lem:SelToSet}, we can derive an analogous upper bound on the probability of the event that
\[\sup_{ ||g||_{\Gamma^*} \leq 1} \sum_{i \in S} \chi_1 \left( \left| \langle \phi_i, g\rangle \right|^2 \right) \geq 2\tau\]
for a randomly chosen set $S$ of size $L$ (inside $W$). The extra factor of $\leq \sqrt{N}$ can be easily accommodated by choosing $C$ to be a sufficiently high power of $\log N$.

We note that
\[ \tau = A_1\cdot \frac{\lambda^2 + C_p \log N}{\log \left(\frac{1}{\nu}\right)} \ll_{p,\gamma} \frac{\log^{3/2}(N)}{\log^{3/2}\left(\frac{1}{\nu}\right)} + \frac{\log N}{\log \left(\frac{1}{\nu}\right)} \]
whenever (\ref{eq:lambdabound}) holds. Thus, the square root of this quantity is $\ll_{p,\gamma} \frac{\log^{3/4}(N)}{\log^{3/4}\left(\frac{N}{L}\right)}$ as required.

We return to consider the second quantity in (\ref{eq:split2}). By Bessel's inequality, the number of nonzero terms appearing in the sum is at most $C^{-1} \sup_{||h||_{\Gamma^*} \leq 1} || h||_{L^2}^2$. From Lemma \ref{lem:supportBound} we see that this is $ C^{-1} p^2 K^{2p-2}$.

We then have
\begin{equation}\label{eq:split4}
\sup_{||h||_{\Gamma^*} \leq 1} \sum_{i \in S} \chi_2 \left( \left| \langle \phi_i, h\rangle \right|^2 \right)\leq \sup_{ ||h||_{\Gamma^*}\leq 1} \; \sup_{\substack{I \subseteq W\\ |I|\leq C^{-1} p^2 K^{2p-2}}} \; \sum_{i \in I} |\langle \phi_i, h\rangle |^2.
\end{equation}
We next observe that
\begin{equation}\label{eq:split5}
\sup_{ ||h||_{\Gamma^*}\leq 1} \; \sup_{\substack{I \subseteq W\\ |I|\leq C^{-1} p^2 K^{2p-2}}}  \sum_{i \in I} |\langle \phi_i,h\rangle |^2 \; \; \leq \sup_{\substack{||\{a_i\}||_{\ell^2}\leq 1 \\ |\text{Support}(\{a_i\})| \leq C^{-1} p^2 K^{2p-2}}} \sup_{||h||_{\Gamma^*}\leq 1} \left( \sum_{i \in W} a_i \langle \phi_i, h \rangle \right)^2.
\end{equation}
To see this, note that on can set $a_i := \frac{\langle \phi_i, h\rangle}{\left(\sum_{j \in I} |\langle \phi_j, h\rangle|^2\right)^{\frac{1}{2}}}$ for all $i$ in a set $I$ of size at most $C^{-1} p^2 K^{2p-2}$.

Combining (\ref{eq:split4}) and (\ref{eq:split5}), we see that
\[ \sup_{||h||_{\Gamma^*} \leq 1} \sum_{i \in S} \chi_2 \left( \left| \langle \phi_i, h\rangle \right|^2 \right) \leq \sup_{\substack{||\{a_i\}||_{\ell^2}\leq 1 \\ |\text{Support}(\{a_i\})| \leq C^{-1} p^2 K^{2p-2}}} \sup_{||h||_{\Gamma^*}\leq 1} \left(\sum_{i \in W} a_i \langle \phi_i, h \rangle \right)^2.\]
Employing Lemma \ref{lem:dual}, this is
\[ \ll \sup_{\substack{||\{a_i\}||_{\ell^2}\leq 1 \\ |\text{Support}(\{a_i\})| \leq C^{-1} p^2 K^{2p-2}}} \left|\left| \sum_{i \in W} a_i \phi_i \right|\right|_{\Gamma_K}^2.\]

By (our earlier application of) Corollary \ref{cor:Lpart1}, we see this is
$$\ll_{p, \gamma} \left(\frac{\log N}{\log \left(\frac{N}{L_1}\right)}\right)^{\frac{1}{2}} \log ( C^{-1} p^2 K^{2p-2}) = \left(\frac{\log N}{\log \left(\frac{N}{L_1}\right)}\right)^{\frac{1}{2}} \log ( A_{\gamma,p}^{-1} \log(N) p^2 K^{2p-2})  $$
\begin{equation}\label{eq:bourconc}
\ll_{p,\gamma} \left(\frac{\log N}{\log \left(\frac{N}{L_1}\right)}\right)^{\frac{1}{2}} \log\log(N).
\end{equation}
Clearly this term is acceptable and completes the proof.
\end{proof}

\subsection{Deriving the Main Theorem}

Using Lemma \ref{lem:restest} with Corollary \ref{cor:v2subinterval} allows us to take $r=3/4$ and $p=3-\epsilon'$ in Lemma \ref{lem:intlenapprox} for any $0 < \epsilon' <1$. More precisely, we can set $\theta := \frac{3/4}{1/2 + 3/4 - 1/(3-\epsilon')} = \frac{3}{5 - 4/(3-\epsilon')}$. To obtain the hypotheses needed to apply Lemma \ref{lem:intlenapprox}, we employ Corollary \ref{cor:v2subinterval}. To obtain the hypotheses needed to apply this corollary, we employ Lemma \ref{lem:restest} with $\delta := 2^{-2\log^{\theta}(N)}$. Note here that $K$ will be set so that $\log^{r+1}(N) K^{2-p/2} \ll 1$ as required. We also employ Proposition \ref{prop:bourgain}. Then, with $\epsilon$ defined by $\frac{9}{22} + \epsilon = \frac{3}{10- 8/(3-\epsilon')}$, we obtain:

\begin{thm1}Let $\Phi := \{ \phi_n \}_{n=1}^{N}$ denote a bounded ONS. Let $\epsilon >0 , \gamma >1$. Then, with probability at least $1- c N^{-\gamma}$ (for some universal $c$), for a uniformly random permutation $\sigma: [N] \rightarrow [N]$, the system $\{ \phi_\sigma(n) \}_{n=1}^{N}$ will satisfy
$$|| \mathcal{V}^2 f ||_{2} \ll_{\epsilon, \gamma} \log^{\frac{9}{22} + \epsilon}(N) ||f||_{2}.$$
\end{thm1}

\section{Concluding Remarks}\label{sec:remarks}
We remarked earlier that when proving the main probabilistic estimate, Lemma \ref{lem:restest}, we were unable to work exclusively in either Bourgain or Talagrand's frameworks, and had to use a hybrid of the two. We briefly expound on these issues and how our approach addresses them.

Bourgain's approach to the $\Lambda(p)$-problem makes very careful use of the structure of the $L^p$ norms, in particular relying on delicate pointwise inequalities involving the function $x \rightarrow |x|^p$. We have been unable to find appropriate analogs of these arguments in the case of the more awkward $\Gamma_{K}$ norms of relevance here. Many of these issues can be avoided by first pigeonholing the coefficients to a single level set (indeed even Bourgain's paper \cite{BourLp} can be substantially simplified in this case, see for instance the argument sketched in \cite{BourSur}) however this introduces a logarithmic factor that is fatal for the current application (although, this is essentially the idea behind the estimates in Section 4.1, and the prior work in the maximal and $\mathcal{V}^r$ cases).

In contrast, Talagrand's methods can be applied to a very general class of norms, including those considered here (as we have partly done via Lemma \ref{lem:Talagrand}). However, when used in the framework of \cite{TalagrandSmooth} (such as Theorem 1.2 there), one `loses' a factor of $N^{-\epsilon}$ in the density of the resulting subsets. In the case of $L^p$ norms and polynomially sparse sets (of relevance in the $\Lambda(p)$-problem), Talagrand is able to decompose the norm into two parts. On one part, better estimates are true for denser sets (and thus the loss of a $N^{-\epsilon}$ factor is acceptable) and his very general theorem can be applied. The other part can be handled with alternate (and more elementary) methods. Again, we have been unable to find an analog of these arguments in the setting of $\Gamma_K$ norms and on the denser sets of relevance here.

Our approach has been to use Bourgain's arguments to prove that the $\Gamma_K$ norm on a random subset of the relevant density is within a logarithmic factor of what is needed (we note that the arguments from section 3 of \cite{TalagrandSmooth} are too crude for the purposes here).  We then use the orthonormal system associated to this set as the starting point for the application of Talagrand's methods, which are invoked by passing to a slightly sparser random subset.

This approach gives satisfactory control over the expectation of the $\Gamma_K$ norm on the resulting subset, however our application requires more information about the concentration around the expectation.  Lemma \ref{lem:Ledoux} provides a useful estimate in this respect, however this alone does not seem sufficient for our application. Fortunately, this estimate is insufficient only when some of the coefficients are large and we are able to adequately control the contribution from terms with large coefficients by returning to Bourgain's argument with pigeonholing, since the number of relevant level sets is then reduced.
Likely, a better understanding/treatment of these issues will lead to an improved estimate.

\vspace{1cm}
Some final remarks:
\begin{remark} As with many orthonormal system results (for instance those of \cite{BourLp} and \cite{Bour}), our arguments can be easily modified to treat the more general case of Hilbertian systems.
\end{remark}

\begin{remark}
A quantitative form of Kolmogorov's rearrangement theorem due to Nakata \cite{Nakata} states that the first $N$ exponentials can be reordered such that there exists an $f$ in their span such that $|| \mathcal{M} f ||_{L^2}  \gg \log^{1/4}(N) ||f||_{L^2}$. From modulation and dilation symmetries, this holds for any set of $N$ exponentials associated to an arithmetic progression.  Notice that a random reordering of $[N]$ will contain an increasing arithmetic progression of length at least $ \gg \log^c(N)$  (for some absolute constant $c>0$) in this `bad' ordering. Thus while a random reordering decreases the norm of the $\mathcal{V}^2$ operator, it increases the norm of the $\mathcal{M}$ operator to (at least) $ (\log \log (N))^{1/4}$, with large probability.
\end{remark}

\begin{remark}
In a related paper \cite{LewkoV2span}, we have shown that given an arbitrary ONS of length $N$, one may find an alternate ONS that spans the same space such that $||\mathcal{V}^2f||_{L^2} \ll \sqrt{\log\log(N)} ||f||_{L^2}$, which is sharp.  Where the results in our current work rely on estimates for what may be called selector processes, there the problem can be reduced to estimates for Gaussian processes. Generally, estimates in that setting are stronger and better understood.
\end{remark}

\texttt{A. Lewko, Microsoft Research}

\textit{allew@microsoft.com}
\vspace*{0.5cm}

\texttt{M. Lewko, Department of Mathematics, University of California, Los Angeles}

\textit{mlewko@math.ucla.edu}

\end{document}